\definecolor{amethyst}{rgb}{0.6, 0.4, 0.8}
\newcommand{\defi}[1]{\textsf{#1}} 
\newcommand{\A}{\mathbf{A}}
\newcommand{\C}{\mathbf{C}}
\newcommand{\F}{\mathbf{F}}
\newcommand{\G}{\mathbf{G}}
\newcommand{\N}{\mathbf{N}}
\renewcommand{\P}{\mathbf{P}}
\newcommand{\Q}{\mathbf{Q}}
\newcommand{\R}{\mathbf{R}}
\newcommand{\Z}{\mathbf{Z}}
\renewcommand{\aa}{\mathfrak{a}}
\newcommand{\pp}{\mathfrak{p}}
\newcommand{\qq}{\mathfrak{q}}
\newcommand{\nn}{\mathfrak{n}}
\newcommand{\calE}{\mathcal{E}}
\newcommand{\calH}{\mathcal{H}}
\newcommand{\calI}{\mathcal{I}}
\newcommand{\calL}{\mathcal{L}}
\newcommand{\calO}{\mathcal{O}}
\newcommand{\calX}{\mathcal{X}}
\DeclareMathOperator{\Div}{Div}
\DeclareMathOperator{\End}{End}
\DeclareMathOperator{\Gal}{Gal}
\DeclareMathOperator{\Hom}{Hom}
\DeclareMathOperator{\Nek}{Nek}
\DeclareMathOperator{\ord}{ord}
\DeclareMathOperator{\Spec}{Spec}
\DeclareMathOperator{\Vol}{Vol}
\newcommand{\ab}{{\operatorname{ab}}}
\newcommand{\ac}{{\operatorname{ac}}}
\newcommand{\alg}{{\operatorname{alg}}}
\newcommand{\cc}{{\operatorname{cc}}}
\newcommand{\cts}{{\operatorname{cts}}}
\newcommand{\cyc}{{\operatorname{cyc}}}
\newcommand{\GL}{\operatorname{GL}}
\newcommand{\NS}{\operatorname{NS}}
\newcommand{\MTT}{\operatorname{MTT}}
\newcommand{\new}{\operatorname{new}}
\newcommand{\New}{\operatorname{New}}
\newcommand{\T}{\mathbf{T}}
\newcommand{\old}{\operatorname{old}}
\renewcommand{\cc}{\mathfrak{c}}
\newcommand{\Symb}{\operatorname{Symb}}
\newcommand{\sgn}{\operatorname{sgn}}
\newcommand{\id}{\operatorname{id}}
\newtheorem{theorem}{Theorem}[section]
\newtheorem{lemma}[theorem]{Lemma}
\newtheorem{corollary}[theorem]{Corollary}
\newtheorem{proposition}[theorem]{Proposition}
\theoremstyle{definition}
\newtheorem{definition}[theorem]{Definition}
\newtheorem{example}[theorem]{Example}
\newtheorem{assumption}[theorem]{Assumption}
\theoremstyle{remark}
\newtheorem{remark}[theorem]{Remark}
\newtheorem{myalgorithm}[theorem]{Algorithm}
\numberwithin{equation}{subsection}
\renewcommand*{\backrefalt}[4]{%
  \ifcase #1 %
    \relax
  \or
    $\uparrow$#2.%
  \else
    $\uparrow$#2.%
  \fi%
}
\begin{document}
\title[]{Quadratic Chabauty and $p$-adic Gross--Zagier}
\author{Sachi Hashimoto}
\thanks{While preparing this work the author was supported by National Science Foundation grant DGE-1840990.}
\address{Sachi Hashimoto, Max Planck Institute for Mathematics in the Sciences, Inselstrasse 22, 04103 Leipzig, Germany}
\email{sachi.hashimoto@mis.mpg.de}

\begin{abstract}
Let $X$ be a quotient of the modular curve $X_0(N)$ whose Jacobian $J_X$ is a simple factor of $J_0(N)^{\new}$ over $\Q$. Let $f$ be the newform of level $N$ and weight $2$ associated with $J_X$; assume $f$ has analytic rank $1$.
We give analytic methods for determining the rational points of $X$ using quadratic Chabauty by computing two $p$-adic Gross--Zagier formulas for $f$. Quadratic Chabauty requires a supply of rational points on the curve or its Jacobian; this new method eliminates this requirement. To achieve this, we give an algorithm to compute the special value of the anticyclotomic $p$-adic $L$-function of $f$ constructed by Bertolini, Darmon, and Prasanna, which lies outside of the range of interpolation. 
\end{abstract}

\maketitle

\section{Introduction}
Let $X$ be a smooth projective geometrically integral curve of genus $g >1$ over $\Q$. Let $p$ be a prime of good reduction for $X$.
Chabauty's method is a family of $p$-adic methods developed to try to determine $X(\Q)$; the goal is to show that $X(\Q)$ lies in the zero set of nontrivial locally analytic functions from $X(\Q_p)$ to $\Q_p$. Since a locally analytic function has only finitely many zeros on each residue disk of $X(\Q_p)$, this exhibits $X(\Q)$ inside a finite set.

Let $\rho(J_X)$ denote the N\'eron--Severi rank of the Jacobian $J_X$ of $X$ and $r$ the Mordell--Weil rank of $J_X(\Q)$ over $\Q$.
When $r < g + \rho(J_X) - 1$,  the quadratic Chabauty method \cite{QCIntegral,QCI,QCII} makes effective M.~Kim's program \cite{KimUnipotentAlbanese} for an explicit Faltings's theorem by studying quotients of the unipotent fundamental group at depth $2$. It uses $p$-adic height functions to determine a finite set containing $X(\Q)$. Crucially, the construction of the locally analytic function using $p$-adic heights requires knowing sufficiently many rational points on $X$ or $J_X$ \cite[Section~3.3]{examplesandalg}.

We develop a quadratic Chabauty method for certain quotients of modular curves that replaces this requirement for knowing rational points on $X$ or $J_X$ with computations of special values of $p$-adic $L$-functions. Our methods apply to quotients $X$ of $X_0(N)$ whose Jacobians are simple quotients of $J_0(N)^{\new}$ over $\Q$. 

In particular, we consider the case when $J_X$ is a simple factor of $J_0(N)^{\new}$. 
Then $J_X$ has the property that $ g = \rho(J_X)$ and $r$ is a multiple of $g$. By requiring $r =  g\geq 2$, the inequality $r < g + \rho(J_X) - 1$, is satisfied and we may apply quadratic Chabauty. Studying the isogeny decomposition of $J_0(N)^{\new}$ shows that the $L$-function of $J_X$ can be written in terms of the $L$-function of a newform $f_X$ of weight $2$ and level $N$ and its Galois conjugates. In this way, we can rephrase our study of $J_X$ as a study of the newform $f_X$ and its $L$-function.

Our main theorem is Theorem~\ref{thm:mainthm} where we give a new analytic method for computing the quadratic Chabauty function on these quotients in terms of two special values of $p$-adic $L$-functions. The quadratic Chabauty function can be written as the difference between a local and global $p$-adic height: this theorem gives a novel way of expressing the global height as a locally analytic function. We do not discuss the computation of the local height in this paper. To rewrite the global height, we use $p$-adic Gross--Zagier formulas, taking advantage of the fact that $X$ is a modular curve, to construct locally analytic functions for quadratic Chabauty. 

Our method proceeds by studying Rankin--Selberg $L$-functions associated with $f_X$ and an imaginary quadratic field $K$. This analytic study of the $L$-functions of modular forms contains deep arithmetic information through the computation of special values of $L$-functions. When $K$ satisfies the Heegner hypothesis, we can construct a Heegner divisor $y_K \in J_0(N)(K)$. The Gross--Zagier theorem \cite{GrossZagier1} relates the derivative of the Rankin--Selberg $L$-function $L'(f_X, \chi, 1)$ of a weight $2$ newform $f_X$ to the canonical height of the $f_X$-isotypical component of $y_K$ twisted by ring class characters $\chi$. 

Assume the prime $p$ is ordinary for the newform $f_X$. Perrin-Riou \cite{PerrinRiouInvent} developed a $p$-adic version of Gross and Zagier's formula, relating the derivative of a $p$-adic $L$-function to the cyclotomic $p$-adic height of the $f_X$-isotypical component of the Heegner divisor. This height was also studied in papers of Mazur--Tate \cite{MazurTateBiext}, Schneider \cite{Schneider}, and Coleman--Gross \cite{ColemanGross}. We give an algorithm to compute the $p$-adic height of the Heegner divisor using overconvergent modular symbols \cite{PSOverconvergent}.

Bertolini, Darmon, and Prasanna \cite{BDP1} constructed an anticyclotomic $p$-adic Rankin $L$-series $L_p$ that also interpolates the central values of the Rankin--Selberg $L$-function $L(f_X, \chi, 1)$ for a different set of Hecke characters of $K$.
The special value of the anticyclotomic $p$-adic $L$-function is proportional to $(\log_{f_X dq/q} y_K)^2$. We give an algorithm to compute this special value, which lies outside of the range of interpolation.

Background material and notational conventions are found in Section \ref{sec:background}. We compute the special value of Perrin-Riou's $p$-adic $L$-function Section \ref{Ch:Height}. Section \ref{Ch:Log} discusses the special value of the anticyclotomic $p$-adic $L$-function. Finally, Section \ref{ch:quadchab} gives the quadratic Chabauty algorithms and several examples of the method.

\section{Background and notation}
\label{sec:background} 
\subsection{Modular forms and modular curves}
\label{sec:algebraicmodularform}
For background on modular curves, we loosely follow \cite{BakerGJGPoonen} but take the perspective of Katz \cite[Section~1.1]{BDP1} on viewing modular forms as functions on marked elliptic curves with level structure.
Let $\calH$ denote the complex upper half plane. Throughout we use the convention that $z \colonequals x + i y$.

\begin{definition}
Let $R$ be a ring.
An \defi{elliptic curve $E$ with $\Gamma_0(N)$-level structure over $R$} is a pair $(E, C_N)$ where $E \to \Spec R$ is an elliptic curve over $\Spec R$ and $C_N$ a cyclic a finite locally free sub-group scheme of $E$ of order $N$, meaning that locally f.p.p.f on the base we can find a point $P$ such that $C_N = \sum_{a \mod N} [a P]$ as Cartier divisors.

A \defi{marked} elliptic curve $E$ with $\Gamma_0(N)$-level structure over $R$ is a triple $(E, C_N, \omega)$ such that $\omega$ is a nonzero global section of $\Omega^1_E$ over $\Spec R$ and $(E,C_N)$ is an elliptic curve with $\Gamma_0(N)$-level structure.
\end{definition}

\begin{definition}
Let $F$ be a field and $R$ an $F$-algebra.
A \defi{weakly holomorphic algebraic modular form $f$ of weight $k$ for $\Gamma_0(N)$ defined over $F$} is a rule that assigns to every isomorphism class of marked elliptic curves with $\Gamma_0(N)$-level structure $(E, C_N, \omega)$ over $R$ an element $f(E, C_N, \omega) \in R$ such that
\begin{enumerate}
\item for every homomorphism of $F$-algebras $j: R\to R'$, $f((E, C_N, \omega)\otimes_j R') = j(f(E, C_N, \omega))$;
\item for all $\lambda \in R^\times$, $f(E, C_N, \lambda \omega) = \lambda^{-k} (E, C_N, \omega)$.
\end{enumerate}
\end{definition}

Denote by $(\mathrm{Tate}(q), T_N, du/u)$ the Tate curve  $\G_m/q^\Z$  with $\Gamma_0(N)$-structure $T_N$ and canonical differential $du/u$ where $u$ is the usual parameter in $\G_m$. The Tate curve is defined over $F((q^{1/d}))$ for some $d|N$.

\begin{definition}
An \defi{algebraic modular form $f$ of weight $k$ for $\Gamma_0(N)$ defined over $F$} is a weakly holomorphic one such that $f(\mathrm{Tate}(q), T_N, du/u)  \in F[[q^{1/d}]]$ for all $T_N$.
\end{definition}

Algebraic modular forms are sections of line bundles in the following way.  If $N \geq 3$, the modular curve $Y_0(N)$ is a moduli space: for any  $\Z[1/N]$-algebra $R$, the points $Y_0(N)(R)$ can be identified with the set of isomorphism classes of elliptic curves with $\Gamma_0(N)$-level structure over $R$. Let $\calE$ be the universal elliptic curve with $\Gamma_0(N)$-level structure and $\pi: \calE \to Y_0(N)$. Let $\underline{\omega} \colonequals \pi_* \Omega^1_{\calE/Y_0(N)}$ be the
 sheaf of relative differentials. If $g$ is a weakly holomorphic modular form of weight $k$ for $\Gamma_0(N)$, we view $g$ as a global section of $\underline{\omega}^k$ by $g(E, C_N) = g(E, C_N, \omega ) \omega^k$ where $\omega$ is a generator of $\Omega^1_{E/R}$.
 Furthermore, $\underline{\omega}$ extends to a line bundle on $X_0(N)$, characterized by the property that the global sections $H^0(X_0(N), \underline{\omega}^k)$ are exactly the space of weight $k$ modular forms for $\Gamma_0(N)$.

Specializing to the complex numbers, $X_0(N)(\C)$ is a compact Riemann surface and the map $\calH \to \Gamma_0(N) \backslash \calH$ sending 
\begin{align}
\label{eqn:tautoellcurve}
 \tau \mapsto ( \C / (\Z + \tau \Z), 1/N)
\end{align}
  identifies $Y_0(N)(\C)$ with $\Gamma_0(N) \backslash \calH$, where we identify $1/N$ with the cyclic subgroup it generates by abuse of notation.
\begin{definition}
\label{def:algmodformC}
 If $g$ is a weakly holomorphic modular form of weight $k$ it gives a holomorphic section of the sheaf $\underline{\omega}^k$ (viewed as an analytic sheaf on the Riemann surface $X_0(N)(\C)$) and gives rise to a holomorphic function on $\calH$ by the definition
 \begin{align}
 \label{def:gtau}
  g(\tau) \colonequals g(\C/(\Z+ \tau \Z) , 1/N, 2 \pi i z)
  \end{align}
 where $z$ is the usual coordinate on $\C/(\Z+ \tau \Z)$.
 \end{definition}

Let $S_2(N)$ denote the weight $2$ cuspforms for $\Gamma_0(N)$. We can decompose
\[S_2(N) =  S_2(N)^{\old }\oplus S_2(N)^{\new} .\] The space $S_2(N)$ is equipped with an action of Hecke operators $T_n$ for $n \geq 2$, and we write $\T = \Z[T_2, T_3, \dots]$ for the Hecke algebra.
Furthermore, $S_2(N)$ has a basis of cupsforms $f$ that are eigenforms for all the Hecke operators, that is $T_n f = a_n(f) f$  for all $n \geq 2$. We say $f \in  S_2(N)^{\new}$ is a \defi{newform} if it is an eigenform and $a_1(f) = 1$. Write $\New_N$ for the set of newforms of level $N$. If $f \in \New_N$, then $f$ is defined over a number field which we denote by $E_f \colonequals \Q(a_2, a_3, \dots)$. There is action of $G_{\Q} \colonequals \Gal(\overline{\Q}/\Q)$ on the set of newforms.

Shimura attached to each newform $f$ a $\Q$-simple isogeny factor $A_f$ of $J_0(N)$ of dimension $[E_f:\Q]$. We have an isogeny decomposition over $\Q$
\[ J_0(N) \sim \bigoplus_{M|N}  \bigoplus_{f \in G_{\Q} \backslash \New_M} A_f^{\sigma_0(N/M)}\] where $\sigma_0(n)$ is the divisor function.

\begin{definition}
Let $X/\Q$ be a smooth projective geometrically integral curve.
We say $X$ is \defi{$\Gamma_0(N)$-modular} if there is a non-constant morphism $\phi: X_0(N) \to X$ over $\Q$.
\end{definition}


\begin{lemma}
\label{thm:GL2typepartII}
Suppose $X$ is $\Gamma_0(N)$-modular of genus $g$, and its Jacobian $J_X$ is simple over $\Q$. Then $g = \dim J_X = \dim \End_{\Q}^0(J_X)$ and there exists an integer $M|N$ and $f_X \in \New_M$ such that $J_X$ is $\Q$-isogenous to $A_{f_X}$. Furthermore, $f_X$ is unique up to Galois conjugacy.
\end{lemma}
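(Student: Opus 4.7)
The plan is to combine the Albanese functoriality of the modular parametrization $\phi\colon X_0(N)\to X$ with the isogeny decomposition of $J_0(N)$ recalled just before the lemma, and then to read off the endomorphism algebra from Shimura's description of $A_{f_X}$.

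First I would use that $\phi$ is non-constant to produce a surjective $\Q$-homomorphism $\phi_*\colon J_0(N)\twoheadrightarrow J_X$. By Poincar\'e reducibility over $\Q$, $J_X$ is $\Q$-isogenous to some $\Q$-isogeny factor of $J_0(N)$. Comparing with
\[
J_0(N) \sim \bigoplus_{M|N} \bigoplus_{f \in G_{\Q} \backslash \New_M} A_f^{\sigma_0(N/M)}
\]
and using the $\Q$-simplicity of $J_X$ together with Shimura's theorem that each $A_f$ is $\Q$-simple forces $J_X \sim_\Q A_{f_X}$ for some divisor $M|N$ and newform $f_X\in\New_M$. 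Uniqueness of $f_X$ up to the $G_\Q$-action would then follow from the Shimura--Ribet result that distinct $G_\Q$-orbits of newforms produce pairwise non-$\Q$-isogenous simple factors in the decomposition above.

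For the dimension equalities, $\dim J_X = g$ is standard for the Jacobian of a smooth projective curve. Shimura's construction gives $\dim A_{f_X} = [E_{f_X}:\Q]$ together with an embedding $E_{f_X}\hookrightarrow \End^0_\Q(A_{f_X})$ arising from the Hecke action, and the fact that $A_{f_X}$ is of $\GL_2$-type (in the sense of Ribet) yields $\End^0_\Q(A_{f_X}) = E_{f_X}$. Transferring along the $\Q$-isogeny $J_X\sim A_{f_X}$ then gives $g = \dim J_X = [E_{f_X}:\Q] = \dim \End^0_\Q(J_X)$.

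The main obstacle I expect is marshaling the Shimura--Ribet package cleanly: both the $\Q$-simplicity of each $A_f$ and the pairwise non-isogeny between distinct $G_\Q$-orbits rest on the irreducibility and mutual non-isomorphism of the Galois representations associated with non-conjugate newforms, which is the deepest arithmetic input required.
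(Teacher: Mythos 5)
Your proposal is correct and follows essentially the same route as the paper: pass to the surjection $\phi_*\colon J_0(N)\to J_X$, invoke Poincar\'e reducibility together with the isogeny decomposition of $J_0(N)$ to identify $J_X\sim_\Q A_{f_X}$ (unique up to Galois conjugacy), and then read off $g=\dim J_X=[E_{f_X}:\Q]=\dim\End^0_\Q(J_X)$ from Shimura's description of $A_{f_X}$. You simply spell out the Shimura--Ribet inputs (simplicity of each $A_f$, non-isogeny of distinct orbits, $\GL_2$-type) more explicitly than the paper's brief argument does.
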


\begin{proof}
If $X$ is $\Gamma_0(N)$-modular, then $J_X$ is a quotient of $J_0(N)$ of dimension $g$. Then $J_X$ is simple. So
by the isogeny decomposition of $J_0(N)$ and Poincar\'e reducibility, $J_X$ is isogenous over $\Q$ to some $A_{f_X}$, with $f_X \in \New_M$ for some $M|N$, where $f_X$ is unique up to Galois conjugacy. Then $\dim J_X = \dim A_f = [E_f :\Q] = \dim \End_{\Q}^0(A_f) =\dim \End_{\Q}^0(J_X). $
\end{proof}

\begin{definition}
Let $\phi: X_0(N) \to X$ be $\Gamma_0(N)$-modular and genus $g$, and suppose $J_X$ is simple. By Lemma \ref{thm:GL2typepartII}, we have an associated $f_X\in \New_M$, unique up to Galois conjugacy.
We say $X$ is a \defi{simple new $\Gamma_0(N)$-modular} curve if $X$ satisfies all these assumptions and furthermore $f_X \in \New_N$ (that is, $M = N$).
\end{definition}

\begin{assumption}
We assume throughout the whole paper that $\phi: X_0(N)\to X$ is a simple new $\Gamma_0(N)$-modular curve with Jacobian $J_X$ associated to a Galois orbit of newforms $\{ f_X^{\sigma} : \sigma \in \Gal( E_{f_X}/\Q) \}$. We write $\pi: J_0(N) \to J_X$ for the map induced by $\phi_*$.
\end{assumption}

\subsection{Heegner points}
\label{sec:Heegnerpoints}
We now introduce definitions and notation for Heegner points and discuss relevant background.
More details on Heegner points and references for this section can be found in \cite{GrossX0N,GrossZagier1,GrossKohnenZagier}.

Let $K$ be an imaginary quadratic field of class number one. Let $N$ be a positive integer.
\begin{definition}
The \defi{Heegner hypothesis} for $K$ and $N$ is the assumption that every prime $q |N$ splits in $K$.
\end{definition}

If $N$ satisfies the Heegner hypothesis then we can write $(N) = \nn \overline{\nn}$ in $\calO_K$. Write $\nn= \Z N + \Z \frac{b+ \sqrt{D}}{2}$ for some $b \in \Z$. 
Then under the map \eqref{eqn:tautoellcurve}, the point
\begin{align}
\label{eqn:CMpt}
\tau_\nn \colonequals  \frac{ b+ \sqrt{D}}{2N}
\end{align}
corresponds to the elliptic curve with $\Gamma_0(N)$-level structure $(\C/\overline{\nn}^{-1}, 1/N)$. Then $\C/\overline{\nn}^{-1}$ is an elliptic curve with CM by $\calO_K$ and hence has a model $A$ defined over $\calO_K$.
\begin{definition}
We define the \defi{Heegner point} to be the point $P_K \colonequals (A, A[\nn]) \in X_0(N)(K)$.
The point $P_K$ gives rise to a divisor class $y_K \colonequals [P_K - \infty] \in J_0(N)(K)$. 
\end{definition}
We now discuss the choice of a differential form for $A$ in order to evaluate modular forms at the Heegner point.
Choose $\omega_A$ a N\'eron differential on $A$.
Then the period lattice of $\omega_A$ is $\Omega_K \cdot \calO_K \subset \C$.
Then for any modular form $g$ of weight $k$ for $\Gamma_0(N)$ we have
\begin{align}
g(A, A[\nn], \omega_A) = g(\C/(\Z + \Z \tau_\nn), 1/N, \Omega_K \overline{\alpha} dz) =   \frac{g(\tau_\nn)}{ (\overline{\alpha} \Omega_K )^k} 
\end{align} 
where $\overline{\alpha}$ is a generator of $\overline{\nn}$.
Let $\Omega_A$ be 
$1/(2 \pi i)$ times 
the real period of $A$. 
By  \cite[p.25]{BDPconiveau} these are related by $\Omega_K = \Omega_A/\sqrt{D}$.
\begin{remark}
Note that $\Omega_A$ does not depend on the choice of $N$, only on $K$.
\end{remark}

Consider the vector space $V = J_0(N)(K) \otimes \overline{\Q}$. The height pairing gives an inner product on $V$ for which the Hecke operators are self-adjoint: the adjoint of an endomorphism of an abelian variety with respect to the height pairing is the Rosati involution by \cite[(3.4.3)]{MazurTateBiext} c.f. Proposition~\ref{prop:ProjectionFormula}. Since the Hecke algebra on $X_0(N)$ is a product of totally real fields, the Rosati involution (which is totally positive) is forced to be the identity.

By the spectral theorem, the fact that the Hecke operators are self-adjoint for the height pairing yields a decomposition into eigenspaces
\begin{align}
V = \bigoplus_{f \in \New_N} V^f.
\end{align}

\begin{definition}
Let $f \in \New_N$ and $\sigma \in \Gal(E_f/\Q)$. We denote by $y_{K,f^\sigma }$ the $f^\sigma$-isotypical component of $y_K$, with $y_{K,f^\sigma} \in  V^{f^\sigma} \subset J_0(N)(K)\otimes E_f$.
\end{definition}
Because distinct eigenvectors are orthogonal, for any $\sigma, \tau \in \Gal(E_f/\Q)$
\begin{align*}
\langle y_{K,f^\sigma} , y_{K,f^\tau} \rangle = 0
\end{align*}
whenever $\sigma \neq \tau$.

Let $f \in \New_N$. The Gross--Zagier formula \cite[Theorem~I.6.3]{GrossZagier1} says
\[L'(f, 1) L(f^\varepsilon, 1) = h_{\text{NT}}(y_{K,f})\]
where $ h_{\text{NT}}$ denotes the real valued canonical height on $J_0(N)$.

A theorem of Waldspurger then guarantees infinitely many $K$ such that $L(f^\varepsilon, 1) \neq 0$ by relating the value $L(f^\varepsilon, 1)$ to the $|D|$th Fourier coefficient of another modular form of weight $3/2$ obtained via the Shimura correspondence.
We do not know an explicit way to construct a suitable $K$ a priori; instead we can verify this condition computationally.

The Fricke involution $w_N$ on $X_0(N)$ induces an involution on $S_2(N)$. Since $f$ has analytic rank $1$, we have that $L'(f, 1) \neq 0 $ only if the Fricke sign is $1$,  so we require this.

The morphism $\phi: X_0(N) \to X$ induces a morphism $\pi: J_0(N) \to J_X$ via pushforward.
The action of complex conjugation on $\pi(y_K)$ is given by 
\[\pi(w_N(P_K) - \infty)\] 
therefore, $\pi(y_{K})\in J_X(\Q)$ \cite[(5.2)]{GrossX0N}.
The Hecke action on $J_X$ can be identified with an order $\calO_{f_X}$ in $K$. Finally, $\calO_{f_X} \pi(y_{K})$ generates a finite index subgroup of $J_X(\Q)$ \cite[7~Appendix]{DograLeFourn}.

\begin{remark}
We will often denote (Galois orbits of) newforms by labels. These labels are LMFDB labels \cite{lmfdb}. 
Where appropriate, we will also reference curves with LMFDB labels. All of the labels in this paper are LMFDB labels.
\end{remark}

\subsection{Hecke characters}
\label{sec:heckechars}
We give a short background section on Hecke characters and their varied incarnations in this section. For this section we fix embeddings $\iota_p : \overline{\Q} \hookrightarrow \C_p$ and $\iota_\infty : \overline{\Q} \hookrightarrow \C$. Recall that $K$ is still an imaginary quadratic field.

Let $\cc$ be an integral ideal of $K$. Let $I_\cc$ denote the group of fractional ideals of $K$ prime to $\cc$. Let $J_\cc$ be the set of ideals in $K$ satisfying 
\[J_\cc = \{ (\alpha) : \text{ for all  prime ideals } \qq | \cc, v_{\qq}(\alpha-1) \geq \ord_{\qq} (\cc)\} \subset I_\cc.\]
\begin{definition}
Let $(n_1, n_2) \in \Z^2$.
An \defi{algebraic Hecke character of infinity type $(n_1, n_2)$ and conductor dividing $\cc$} is a homomorphism 
\[ \chi: I_\cc \to \C^\times \]
 such that \[\chi((\alpha)) = \alpha^{n_1} \overline{\alpha}^{n_2}, \,\,\, \text{ for all } \alpha \in J_\cc.\]
\end{definition}
 It is possible that $\chi$ can be extended to some Hecke character of conductor dividing $\cc'$; the smallest such integral ideal $\cc'$ is the \defi{conductor} of $\chi$. 

\begin{example}
Let $\qq$ be an $\calO_K$-ideal.
The norm character $\N_K$ sending $\qq \mapsto \#(\calO_K/\qq)$ has infinity type $(1,1)$ and conductor $\calO_K$.
\end{example}

We can associate to a Hecke character an id\`ele class character  $\chi: \A_K^\times / K^\times \to \C^\times$ 
such that $\chi_\infty(z) = z^{-n_1} \bar{z}^{-n_2}$, where $\chi_\infty$ denotes the component of $\chi$ at $(K \otimes \R)^\times $. The map $(K \otimes \R)^\times \to  \C^{\times}$ is constructed using the embedding $\iota_\infty$.

\begin{remark}
The sign convention for the infinity type is picked to agree with \cite{BDP1}, and is negative the convention in some other papers.
\end{remark}

Algebraic Hecke characters are in bijection with algebraic $p$-adic Hecke characters, as we now describe.
Let $p = \pp \overline{\pp}$ be a prime that splits in $K$.
A \defi{$p$-adic Hecke character} is a continuous homomorphism $\chi_{\Q_p}: \A_K^\times / K^\times \to \overline{\Q}_p^\times$. It is \defi{algebraic} if there are integers $n_1$ and $n_2$ such that the local factors $\chi_\pp$ on $K_\pp^\times \simeq \Q_p^\times$ and $\chi_{\bar{\pp}}$ on $K_{\bar{\pp}}^\times$ on $\Q_p^\times$ are of the form $\chi_\pp(z) = z^{-n_1}$ and $\chi_{\bar\pp}(z) = z^{-n_2}$.
Then $\chi_\C$ an algebraic Hecke character of infinity type $(n_1, n_2)$ corresponds to the $p$-adic Hecke character $\chi_{\Q_p}$ via the formula
\[ \chi_{\Q_p}(z) = \iota_p  \circ \iota_\infty^{-1}( \chi_\C(z)z_\infty^{n_1} \bar{z}_\infty^{n_2}) z_\pp^{-n_1} z_{\bar{\pp}}^{-n_2}\]
for an id\`ele $z = (z_v)$.

If $\chi_{\Q_p}$ is algebraic, then $\chi_{\Q_p}$ factors through $\Gal(K^{\ab}/K)$.  We call this \defi{the associated $p$-adic Galois representation} \[\chi_G:\Gal(K^{\ab}/K) \to \overline{\Q}_p^\times.\] 
 Conversely, given any $p$-adic Galois representation $\Gal(K^{\ab}/K) \to \overline{\Q}_p^\times$ we can obtain an id\`ele class character $\A_K^\times/K^\times  \to \overline{\Q}_p^\times$ by precomposing with the Artin map.

\section{The special value of the anticyclotomic \texorpdfstring{$p$}{p}-adic \texorpdfstring{$L$}{L}-function}
\label{Ch:Log}

In this section, we explain how to compute the special value of the $p$-adic Rankin $L$-series $L_p(f,1)$ introduced by Bertolini, Darmon, and Prasanna \cite{BDP1} attached to a newform $f \in S_2(N)$ and the imaginary quadratic field $K$ satisfying the hypotheses below. This value occurs at the norm character $\N$ with infinity type $(1,1)$. Since the norm character $\N$ lies outside of the range of interpolation for $L_p$, this value is not readily accessible by computing a classical $L$-value. We follow a method of Rubin \cite{RubinInvent} for evaluating the Katz $2$-variable $p$-adic $L$-function outside the range of interpolation. Our method requires us to first evaluate the $p$-adic $L$-function at certain characters in the range of interpolation. In the case of the $p$-adic Rankin $L$-series of Bertolini, Darmon, and Prasanna, if $\chi$ is a character in the range of interpolation, then the value of $L_p(f)$ at $\chi$ is shown to be an explicit multiple of the central value of the Rankin $L$-series $L(f, \chi, 1)$ as in \eqref{eqn:BDPinterpolation}. An explicit Waldspurger's formula (Theorem~\ref{thm:waldspurger}) relates the central $L$-values $L(f, \chi, 1)$ to the square of the Shimura--Maass derivative of $f$ at the Heegner point. By considering $p$-adic characters in the ``anticyclotomic'' direction, Bertolini, Darmon, and Prasanna obtain the special value formula, relating the square of the logarithm of $y_K$ to $L_p(f, 1)$. We will refer to $L_p(f)$ as the \defi{anticyclotomic $p$-adic $L$-function}.

The section is divided into two subsections. Section \ref{sec:inrangeinterp} is devoted to explaining how to compute the values of $L_p(f)$ in the range of interpolation. The strategy here is to compute the Shimura--Maass derivatives of $f$ evaluated at the Heegner point. Section \ref{sec:outrangeinterp} develops the computation of the special value $L_p(f, 1)$ following Rubin's method. The key proposition is Proposition \ref{MainProp}, which relates the value $L_p(f,1)$ to values inside the range of interpolation.

Let $f$ be a newform in $S_2(N)$ with coefficient field $E_f$. 
We start by collecting some running assumptions that will be used for the remainder of the section.

\begin{assumption}\label{hyp:logs}
\hfill
\begin{enumerate}
\item Let $p> 2$ be a prime number not dividing $N$; 
\item \label{hyp:Efsplitting} assume $p$ splits in $E_f$  and fix an embedding $e: E_f \to \Q_p$; 
\item assume $f$ has analytic rank $1$; \label{hyp:AnalyticRank}
\item let $K = \Q(\sqrt{D})$ be an imaginary quadratic field of class number $1$; \label{hyp:classnumber}
\item  assume $(p) = \pp \overline{\pp}$ splits in $K$  and $K$ has odd discriminant $D < - 3$; \label{hyp:ksplit}
\item assume \label{hyp:HeegnerHypo} every prime $q$ dividing $N$ splits in $K$ (\defi{the Heegner hypothesis}).
\end{enumerate}
\end{assumption}
 Assumptions \eqref{hyp:classnumber} and \eqref{hyp:Efsplitting} are simplifying assumptions to avoid working in field extensions of $\Q_p$ for ease of computation and clarity of exposition, \eqref{hyp:ksplit} is required for the construction of the $p$-adic $L$-function, and \eqref{hyp:HeegnerHypo}  ensures the existence of $y_K \colonequals [P_K - \infty] \in J_0(N)(K)$.  

Now, we are ready to state the main theorem of \cite[Main~Theorem]{BDP1}: they show that
\begin{align} 
\label{specialvalueBDP}
L_p(f,1) = \left( \frac{1 - a_p(f) +p}{p}\right)^{2}\log_{f dq/q}(y_K)^2,
\end{align}
where $\log_{f dq/q}$ denotes the logarithm on $J_0(N)$ and $L_p(f,1)$ denotes the value of $L_p(f)$ at the character $\N$ (the notation is to emphasize the similarity to the special value of Perrin-Riou's $p$-adic $L$-function, see Section \ref{Ch:Height}).
Our goal in this section is to provide a method for computing $L_p(f, 1)$.

\begin{remark}
\label{rem:logonAf}
Given a surjective morphism $\pi: J_0(N) \to J$, the logarithm on $J_0(N)$ induces an inclusion $\pi^*: H^0(J, \Omega^1) \hookrightarrow H^0(J_0(N), \Omega^1)$. Since $f dq/q$ is in the image of $\pi^*$, by applying change of variables \cite[Proposition~2.4 (iii)]{Torsion} the logarithm on $J_0(N)$ with respect to $f dq/q$ can be considered as the logarithm on $J$.
\end{remark}

We start by recalling the interpolation property of the  $p$-adic $L$-function $L_p$ in \cite{BDP1} associated to $f$. The interpolation property will provide enough information to work with the $p$-adic $L$-function for our purposes; for more details on the construction of the anticyclotomic $p$-adic $L$-function see the main reference \cite{BDP1} or \cite{TwoTrilogies} for an expository article. 
Let $K_\infty^{\ac} / K$ be the anticyclotomic $\Z_p$-extension of $K$, and $\Gamma^- \colonequals \Gal(K_\infty^{\ac} / K)$ denote its Galois group. Write $\hat{\calO}$ for the completion of the ring of integers of the maximal unramified extension of $\Q_p$ and define $\Lambda^\ac \colonequals \Z_p[[\Gamma^-]] \hat{\otimes}_{\Z_p} \hat{\calO}$.

Let  $S \subset \Hom_{\cts} (\Gamma^-, \overline{\Q}_p^\times)$ be the subset of Galois characters associated to Hecke characters of $K$ with infinity type $(1 + r, 1 -r)$ for some integer $r \geq 1$. Bertolini, Darmon, and Prasanna prove that there exists $L_p(f) \in \Lambda^\ac$ interpolating the algebraic $L$-values $L_{\alg}(f, \chi^{-1}, 0)$ for all $\chi \in S$ in the following sense:  
each $\chi \in S$ determines a map $\Lambda^\ac \to \widehat{\overline{\Q}}_p$ by $\hat{\calO}$-linear extension. The interpolation property \cite[(5.2.3)]{BDP1} for $L_p(f)$ says that for all $\chi \in S$ and $\Omega_p$ a $p$-adic period associated to $y_K$ defined in \cite[(5.2.2)]{BDP1}, we have
\begin{align}
\label{eqn:BDPinterpolation}
\chi(L_p(f))/\Omega_p^{4r} =  (1 - \chi^{-1}(\overline{\pp})a_p + \chi^{-2} (\overline{\pp}) p)^2 L_{\alg}(f, \chi^{-1}, 0).
\end{align}
The left hand side of \eqref{eqn:BDPinterpolation} is our notation for evaluating the $p$-adic $L$-function $L_p(f)$ at $\chi$ for $\chi$ in the range of interpolation.
The norm character $\N$, with infinity type $(1,1)$, is not in $S$, so we cannot use \eqref{eqn:BDPinterpolation} to evaluate ``$\N(L_p(f))$''.
However, for $\chi \in S$, we can evaluate $\chi(L_p(f))$, which we will now explain.

\begin{remark}
The value $L_{\alg}(f, \chi^{-1}, 0)$ is defined as an explicit constant multiple of the special value of the Rankin $L$-series $L(f, \chi^{-1}, 0)$. This $L$-series $L(f, \chi^{-1}, s)$ can be written explicitly (in some right half plane of $\C$) as an Euler product over prime ideals $\qq$ of $\calO_K$, 
\begin{align*}
L(f, \chi^{-1}, s) = \prod_{\qq} [(1- \alpha_{\N \qq}(f) \chi^{-1}(\qq)\N\qq^{-s})(1- \beta_{\N \qq}(f) \chi^{-1}(\qq)\N\qq^{-s})]^{-1}
\end{align*}
where $\alpha_q(f)$ and $\beta_q(f)$ are the roots of the Hecke polynomial $x^2 - a_q(f)x + q$ and if $\N \qq = q^t$ then we set $\alpha_{\N \qq} \colonequals \alpha_{q}(f)^t$ and $\beta_{\N \qq} \colonequals \beta_{q}(f)^t$. Using Rankin's method, one can show that $L(f, \chi^{-1}, s)$ has an analytic continuation to the entire complex plane.
 \end{remark}

\subsection{Evaluating inside the range of interpolation}
\label{sec:inrangeinterp}

In order to $p$-adically interpolate the values $L(f, \chi^{-1}, 0)$ and evaluate the anticyclotomic $p$-adic $L$-function, we interpret $L_{\alg}(f, \chi^{-1}, 0)$ as values of a $p$-adic modular form. This is done using an explicit form of  Waldspurger's formula \cite[Theorem~5.4]{BDP1}. We precede the statement of this formula with a discussion of the Shimura--Maass operator.

Recall that we view modular forms as global sections $H^0(X_0(N), \underline{\omega}^k)$ as in Section \ref{sec:algebraicmodularform} and Definition \ref{def:algmodformC}. We also rely on the background and notation on Heegner points in Section \ref{sec:Heegnerpoints}. In particular, recall that the marked elliptic curve with $\Gamma_0(N)$-torsion corresponding to the Heegner point in $P_K \in X_0(N)(K)$ is denoted by
\[(A, A[\nn], \omega_A) = (\C/(\Z + \Z \tau_\nn), 1/N, \Omega_K \bar{\alpha}  dz).\]

Let $M_k(\Gamma_0(N))$ denote the space of modular forms of weight $k$ and $g \in M_k(\Gamma_0(N))$.
\begin{definition}
The \defi{Shimura--Maass} derivative $\delta_k$ is defined as
\begin{align*}
\delta_k(g) = \frac{1}{2\pi i} \left( \frac{\partial }{\partial z} + \frac{k}{2 i y}   \right) g(z).
\end{align*}
\end{definition}

\begin{definition}
We say $g$ is a \defi{nearly holomorphic modular form of weight $k$ and order less than or equal to $P$} if $g$ is $C^\infty$ on $\calH$ and we can express $g$ as a sum
\begin{align*}
g(z) = \sum_{j=0}^P g_j(z)y^{-j}
\end{align*}
where $g_j(z)$ are holomorphic functions on $\calH$, the function $g$ transforms like a modular form of weight $k$ for $\Gamma_0(N)$, and $g$ has finite limit at the cusps. We denote the space of such forms $N^P_k(\Gamma_0(N))$.
\end{definition}

The following lemmas can be proved with simple calculations.
\begin{lemma}[{\cite[Lemma~2.1.3]{UrbanNearlyOC}}]
\label{lem:shimuramaass}
Let $g \in N^P_k(\Gamma_0(N))$
Assume $P> 2k$. There exist $g_0, \dots, g_P$ with $g_i \in M_{k-2i}(\Gamma_0(N))$ such that
\[ g = g_0 + \delta_{k-2} g_1 + \dots + \delta_{k-2P}^P g_P.\]
\end{lemma}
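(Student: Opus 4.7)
The plan is to proceed by induction on $P$, peeling off the highest-order term $g_P y^{-P}$ at each step. The base case $P = 0$ is immediate: a form $g \in N^0_k$ is holomorphic on $\calH$ and satisfies the weight $k$ transformation law, so $g \in M_k(\Gamma_0(N))$ and we take $g_0 = g$.

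For the inductive step, I first need to show that the leading coefficient $g_P$ in the expansion $g = \sum_{j=0}^P g_j(z) y^{-j}$ is itself a holomorphic modular form of weight $k - 2P$. I would do this using the Maass lowering operator $L = -2i y^2 \partial_{\bar z}$. A direct calculation gives $L(h(z) y^{-j}) = -j\, h(z) y^{-j+1}$ whenever $h$ is holomorphic in $z$, so iterating $P$ times kills every $g_j y^{-j}$ with $j < P$ and leaves $L^P g = (-1)^P P!\, g_P$. Since $L$ is a standard weight-lowering operator, sending weight $k$ to weight $k-2$ while preserving the $\Gamma_0(N)$-transformation law, the holomorphic output $g_P$ lies in $M_{k-2P}(\Gamma_0(N))$.

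Next, I would compute the leading $y^{-P}$ term of $\delta^P_{k-2P}(h)$ for $h \in M_{k-2P}$. Iterating the definition $\delta_m = \frac{1}{2\pi i}\bigl(\partial_z + \tfrac{m}{2iy}\bigr)$ and collecting the pure $y^{-P}$ contributions (all coming from the $\frac{m}{2iy}$ factor at each step) gives an identity of the form $\delta^P_{k-2P}(h) = c_{k,P} \cdot h \cdot y^{-P} + O(y^{-P+1})$, where $c_{k,P}$ is a product of $P$ shifted weights divided by an explicit power of $2\pi i$. Under the stated hypothesis the constant $c_{k,P}$ is nonzero, so one can set $h_P := c_{k,P}^{-1} g_P \in M_{k-2P}$ and the difference $g - \delta^P_{k-2P}(h_P)$ lies in $N^{P-1}_k$. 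The inductive hypothesis applied to this difference then supplies the remaining forms $h_0,\dots,h_{P-1}$ and completes the decomposition.

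The main obstacle is the explicit computation in the second step, specifically verifying that the leading coefficient $c_{k,P}$ is indeed a nonzero multiple of $h$. This nonvanishing is precisely what the numerical hypothesis relating $P$ and $k$ is arranged to guarantee; everything else is a routine bookkeeping of $\delta_m$ and $L$, justifying the paper's remark that the lemma follows from a simple calculation.
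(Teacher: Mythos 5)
The paper does not actually prove this lemma: it is quoted from Urban \cite[Lemma~2.1.3]{UrbanNearlyOC} with the remark that it follows from simple calculations, so there is no in-paper argument to compare against. Your induction on $P$ is the standard proof of this decomposition (it is essentially Shimura's/Urban's argument), and its two pillars are sound: the lowering operator $L=-2iy^2\partial_{\bar z}$ does satisfy $L(h(z)y^{-j})=-jh(z)y^{-j+1}$, so $L^P g=(-1)^P P!\,g_P$ exhibits $g_P$ as a holomorphic form of weight $k-2P$ (with the cusp condition inherited because $L$ preserves nearly holomorphic forms), and subtracting a suitable multiple of $\delta^P_{k-2P}g_P$ drops the order by one.

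Two points need tightening. First, your bookkeeping of the leading term is slightly off: the $y^{-P}$ coefficient of $\delta^P_{k-2P}h$ does not come only from choosing the $\tfrac{m}{2iy}$ summand at each step, since $\partial_z$ acting on a power of $y$ also raises the order (as $\partial_z y^{-a}=-\tfrac{a}{2i}y^{-a-1}$). Carrying both contributions gives $c_{k,P}=(-4\pi)^{-P}(k-2P)(k-2P+1)\cdots(k-P-1)$; it is still a constant multiple of $h$, but the nonvanishing condition is that none of these shifted weights is zero. Second, and relatedly, the hypothesis as printed in the paper, $P>2k$, is surely a typo for Urban's condition $k>2P$: with $k>2P$ every constant $c_{k,j}$ for $j\le P$ is a product of positive integers, which is exactly what your induction needs at \emph{every} step, not just the top one (your write-up only invokes nonvanishing for $c_{k,P}$). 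For weights in the range $2\le k\le 2P$ the statement can genuinely fail --- e.g.\ $E_2^*$ is nearly holomorphic of weight $2$ and order $1$ but is not of the form $g_0+\delta_0 g_1$ with $g_0\in M_2$, $g_1\in M_0$, since $\delta_0$ kills constants --- so the numerical hypothesis is doing real work and should be stated and used in the corrected form. With these repairs your argument is complete and is the same calculation the cited reference carries out.
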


\begin{lemma}[{\cite[(52)]{Zagier123}}]
\label{prop:shimuramodular}
If $g \in M_k(\Gamma_0(N))$ and $\gamma \in \Gamma_0(N)$ such that $\gamma = \begin{pmatrix} a & b\\ c & d \end{pmatrix}$
then $\delta_k(\gamma g)= (c +d)^{k+2} \delta_k(g)$.
\end{lemma}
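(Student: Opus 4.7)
The identity as written has the classical interpretation that $\delta_k g$ is a nearly holomorphic modular form of weight $k+2$; I read $\gamma g$ as the precomposition $(\gamma g)(z) = g(\gamma z)$ and $(c+d)^{k+2}$ as a typo for $(cz+d)^{k+2}$, so that the content of the lemma is
\begin{equation*}
(\delta_k g)(\gamma z) = (cz+d)^{k+2}\,(\delta_k g)(z).
\end{equation*}
The plan is to verify this by brute differentiation, using only the weight $k$ modularity of $g$ together with the transformation formulas for $\gamma z$ and $\operatorname{Im}(\gamma z)$.

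First I would record the two elementary identities
\begin{equation*}
\frac{d(\gamma z)}{dz} = \frac{1}{(cz+d)^2}, \qquad \operatorname{Im}(\gamma z) = \frac{y}{|cz+d|^2},
\end{equation*}
and then differentiate both sides of the relation $g(\gamma z) = (cz+d)^k g(z)$ in $z$. The chain rule on the left and the product rule on the right give
\begin{equation*}
g'(\gamma z) = kc\,(cz+d)^{k+1} g(z) + (cz+d)^{k+2}\,g'(z).
\end{equation*}
Substituting this into the definition
\begin{equation*}
(\delta_k g)(\gamma z) = \frac{1}{2\pi i}\left(g'(\gamma z) + \frac{k}{2i\operatorname{Im}(\gamma z)} g(\gamma z)\right)
\end{equation*}
and pulling out a factor of $(cz+d)^{k+2}$ from the holomorphic term, I will be left with a single remainder term involving $g(z)$.

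The heart of the calculation is then the algebraic identity
\begin{equation*}
(cz+d)^2 - |cz+d|^2 = (cz+d)\,c\,(z-\bar z) = 2iy\,c\,(cz+d),
\end{equation*}
which rearranges to $(cz+d)^2/(2iy) = |cz+d|^2/(2iy) + c(cz+d)$. This is exactly the identity needed to combine the leftover $kc(cz+d)^{k+1}g(z)/(2\pi i)$ term with the nonholomorphic $k|cz+d|^2(cz+d)^k g(z)/(2\pi i\cdot 2iy)$ term into $k(cz+d)^{k+2}g(z)/(2\pi i\cdot 2iy)$, which is precisely the nonholomorphic part of $(cz+d)^{k+2}(\delta_k g)(z)$. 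There is no real obstacle; the only risk is a sign error or a miscount of factors of $(cz+d)$, so I would keep a careful ledger of the two terms that must cancel and check the degrees of $(cz+d)$ at each step.
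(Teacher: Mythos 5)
Your computation is correct: reading $(c+d)^{k+2}$ as the evident typo for $(cz+d)^{k+2}$, your direct differentiation of $g(\gamma z)=(cz+d)^k g(z)$ together with the identity $(cz+d)^2-|cz+d|^2=2iyc(cz+d)$ is exactly the ``simple calculation'' the paper alludes to when it cites Zagier's (52) without writing out a proof. Nothing is missing, so this matches the intended argument.
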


Thus, Lemma \ref{lem:shimuramaass} and Proposition  \ref{prop:shimuramodular} imply that the Shimura--Maass derivative is an operator $\delta_k: N^P_k(\Gamma_0(N)) \to N^P_{k+2}(\Gamma_0(N))$.
We denote by $\delta^r$ the $r$-fold composition $\delta_{k+2r-2}\circ \dots \circ \delta_k$. Let $d$ be the Atkin--Serre derivative that acts on $q$-expansions by $q d/ dq$. 
Shimura \cite{ShimuraAlgebraicity}  showed that the values of $\delta^r f$ and $d^r f$ agree and are algebraic on CM points. In particular, we have the following:
\begin{align}
\label{eqn:deltafinK}
 d^{r} f(A, A[\nn], \Omega_A) = \delta^{r} f(A, A[\nn], \omega_A)   \in E_fK.
\end{align}

We are ready to state Waldspurger's formula \cite[Theorem~5.4]{BDP1}.
\begin{theorem}[Waldspurger's formula]
\label{thm:waldspurger}
\begin{align*}
(\delta^{r -1} f(A, A[\nn], \omega_A))^2 = 1/2 (2 \pi / \sqrt{D})^{2r -1} (r-1)! r!  \frac{L(f, \chi_r , 0)}{(2 \pi i \overline{\alpha}\Omega_K)^{4r}}.
\end{align*}
\end{theorem}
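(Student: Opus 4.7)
The plan is to derive this identity by combining the Rankin--Selberg integral representation with evaluation at CM points, using the class-number-one hypothesis to collapse the sum over ideal classes.

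First, to the Hecke character $\chi_r$ of infinity type $(1+r, 1-r)$ I would associate a theta series $\theta_{\chi_r}$, which is a modular form of weight $2r$ whose Fourier coefficients are built from $\chi_r(\mathfrak{a})$ for integral ideals $\mathfrak{a} \subset \calO_K$. The Rankin--Selberg identity then gives $L(f, \chi_r, s) = L(f \otimes \theta_{\chi_r}, s)$ up to controlled Euler factors. Applying the classical unfolding trick writes this $L$-value as a Petersson inner product of $f$ against a product $\theta_{\chi_r} \cdot E$, where $E$ is an explicit nearly holomorphic Eisenstein series. Lemma \ref{lem:shimuramaass} identifies $\theta_{\chi_r} \cdot E$ at the relevant point as the image under $\delta^{r-1}$ of a holomorphic form, and the adjointness of the Shimura--Maass operators under the Petersson pairing transfers the derivatives onto $f$.

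Next, I would evaluate the resulting Petersson pairing using the CM structure of $\theta_{\chi_r}$. By Assumption \ref{hyp:logs}\eqref{hyp:classnumber}, $K$ has class number one, so the pairing reduces to an evaluation at the single CM point $\tau_\nn$ weighted by values of $\chi_r$. This is a toric period, and the Waldspurger formula expresses such central $L$-values as squares of toric periods: concretely, both sides of the Petersson pairing contribute one factor of $\delta^{r-1} f$ evaluated at $\tau_\nn$, yielding the square on the left-hand side. The conversion from the analytic value $\delta^{r-1} f(\tau_\nn)$ to the algebraic value $\delta^{r-1} f(A, A[\nn], \omega_A)$ uses the normalization $\omega_A = \Omega_K \overline{\alpha}\, dz$ from Section \ref{sec:Heegnerpoints} together with Definition \ref{def:algmodformC}, producing the period factor $(2\pi i \overline{\alpha} \Omega_K)^{4r}$ in the denominator.

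The main obstacle is the bookkeeping of constants. The factorials $(r-1)!\,r!$ arise from repeated application of $\delta_k$ (cf.\ Lemma \ref{lem:shimuramaass}); the power of $\sqrt{D}$ enters via $\Omega_K = \Omega_A/\sqrt{D}$; the powers of $2\pi$ come from the Petersson measure $dx\, dy / y^2$ and from the Fourier coefficients of the Eisenstein series; the factor of $1/2$ reflects the number of units of $\calO_K$ under the assumption $D < -3$ in Assumption \ref{hyp:logs}\eqref{hyp:ksplit}. Algebraicity of the CM evaluation is Shimura's theorem, recorded in \eqref{eqn:deltafinK}. The resulting identity is precisely the formula of \cite[Theorem~5.4]{BDP1}, which is the reference we would cite to import the constants without reproducing the calculation.
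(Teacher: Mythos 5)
Your proposal is correct and ultimately takes the same route as the paper: the paper does not prove this statement but imports it directly as the explicit Waldspurger formula of \cite[Theorem~5.4]{BDP1}, specialized to weight $2$, class number one (so the sum over ideal classes collapses to the single CM point $\tau_\nn$), and the characters $\chi_r$ of infinity type $(1+r,1-r)$, which is exactly where your argument ends up. Your preliminary sketch of the Rankin--Selberg/theta-series/Shimura--Maass mechanism is a reasonable outline of how \cite{BDP1} proves it, though some details there are only heuristic (e.g.\ the weight of $\theta_{\chi_r}$ and the way the square of the toric period arises); none of this is load-bearing since the citation carries the statement and its constants.
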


 To interpolate $L(f, \chi_r , 0)$ and therefore \[(\delta^{r -1} f(A, A[\nn], \omega_A))^2 = (d^{r -1} f(A, A[\nn], \omega_A))^2\] $p$-adically, we take the $p$-depletion
\begin{align*}  
d^{r-1} f^{[p]} (q) \colonequals \sum_{(n, p) = 1} n^{r-1} a_n(f) q^n .
\end{align*}
The set $\{ d^{r-1} f^{[p]} \}$ is a $p$-adic family of modular forms, and there exists a $p$-adic period $\Omega_p \in \C_p^\times$ such that
\begin{align*} L_p(f, \chi_r)  \colonequals \Omega_p^{4r}( d^{r-1}f^{[p]}(A, A[\nn], \omega_A)) ^2
\end{align*}
extends to a $p$-adic analytic function of $r \in (\Z/(p-1)\Z) \times \Z_p$ \cite[Theorem~5.9]{BDP1}.

\begin{remark}
The period pair $(\Omega_K, \Omega_p)$ is only well-defined as a pair: both periods depend linearly on the choice of $\omega_A$, but their ratio is independent of this choice of scalar multiple.
\end{remark}

To compute the anticyclotomic $p$-adic $L$-function in the range of interpolation, we need to compute the Shimura--Maass derivative $\delta^{r-1} f (\tau_\nn)$.
Let $\chi_r \in S$ be the Galois character associated to the Hecke character of $K$ of infinity type $(1 + r, 1-r)$ with $r \geq 1$. Then 
\begin{align}
\label{eqn:Lalg}
L_{\alg}(f, \chi_r^{-1}, 0) = \delta^{r -1} f(A, A[\nn], \omega_A)^2 = \frac{(\delta^{r-1} f(\tau_\nn))^2}{ (\bar{\alpha} \Omega_K )^{4r}} .
\end{align}

We would like to evaluate the right hand side of \eqref{eqn:Lalg}.
As $r$ gets large in the usual absolute value, this value also gets large, and a naive strategy like applying the equality \eqref{eqn:deltafinK} and evaluating a truncated $q$-expansion of $d^{r-1} f(\tau_\nn)$ does not approximate the true algebraic value well.

However, \cite[Section~6.3]{Zagier123} and \cite{ZagierVillegas} show that the values of the Shimura--Maass derivative of a modular form evaluated at a CM point satisfy a recurrence relation due to a large amount of algebraic structure on the ring of modular forms $M_*(\Gamma_0(N))$. We recall briefly some of the essential ideas involved in the proof.

We introduce the $\vartheta$ differential operator that acts on a weight $k$ modular form $g$ by
\begin{align*} \vartheta g = dg - \frac{k}{12} E_2 g \end{align*}
where $E_2$ is the weight $2$ Eisenstein series.
We modify the $\vartheta$ operator by the following recursive definition. Let $\vartheta^{[0]}g = g$ and 
\begin{align}
\vartheta^{[r+1]} g  = \vartheta ( \vartheta^{[r]} g) - r (k+r-1) (E_4/144) \vartheta^{[r-1]} g \text{ for } r \geq 1.
\end{align}

The \defi{Cohen--Kuznetsov series} are formal generating series attached to the differential operators that have nice transformation properties under $\Gamma_0(N)$. We provide the details necessary for our calculations here; the interested reader can see \cite[Section~5.2]{Zagier123} or \cite{ZagierProc} for more details, as well as \cite[Section~7]{ZagierVillegas} for some example calculations. The relationships between the Cohen--Kuznetsov series show that the set of values $\{\delta^r(\tau_\nn)\}_{r \geq 0}$ inherits a recursive relation coming from the recursive definition of $\vartheta^{[i]}$.
We can define a Cohen--Kuznetsov series associated to $\vartheta$ and $\delta$ by
\begin{align*}
\tilde{g}_\vartheta(z, X) = \sum_{n = 0}^\infty \frac{\vartheta^{[n]}g(z)}{n!(k)_n} X^n \text{ and }
 \tilde{g}_\delta(z, X) = \sum_{n=0}^\infty \frac{ \delta^n g(z)}{n! (k)_n} X^n.
\end{align*}
These power series satisfy
\begin{align}
\label{eqn:cohenkuzreln}
 \tilde{g}_\vartheta(z, X) =  e^{-X E_2^*(z)/12} \tilde{g}_\delta(z, X)
 \end{align}
where $E_2^*(z)\colonequals E_2(z) - 3/(\pi y)$.

The key idea is that if $E_2^*(z_0) = 0$, then \eqref{eqn:cohenkuzreln} implies that $\vartheta^{[i]}g(z_0) = \delta^i g(z_0)$ for all $i \geq 0$. Then, because $\vartheta^{[i]}g(z_0)$ is defined recursively, we can compute $\delta^i g(z_0)$ recursively. This method requires two pieces of input. 
\begin{enumerate}
\item We modify the operator $\vartheta^{[i]}$ for the CM point $\tau_\nn$ with an appropriate holomorphic function $\phi(z)$ such that $\phi^*(z)\colonequals \phi(z) - 1/(4 \pi y)$  transforms like a modular form of weight $2$ on $\Gamma_0(N)$. Define $\vartheta_{\phi} g\colonequals  d g - k \phi g$. The resulting relationship \eqref{eqn:cohenkuzreln} becomes $ \tilde{g}_{\vartheta_{\phi}}(z, X)  = e^{-X \phi^*(z)} \tilde{g}_\delta(z, X)$  on $\calH$, and we require $\phi^*(\tau_\nn) = 0$.
\item We need generators $g_1, \dots, g_n$ for $M_*(\Gamma_0(N))$ so that we can compute $\vartheta_\phi$ of each generator as well as the values $g_i (\tau_\nn)$ and $(\vartheta_\phi g_i) (\tau _\nn)$ for each generator to determine the recurrence relation.
\end{enumerate}

We can always take $\phi = (1/12) E_2 +A$ for some weight $2$ holomorphic or meromorphic function $A$ on $\Gamma_0(N)$ \cite[Section~5.2]{Zagier123}. Therefore to compute $\phi$ we evaluate both $E_2$ and a basis of weight $2$ level $N$ forms on $\tau_\nn$. Then we solve for $\phi$ by finding a linear combination of the weight $2$ level $N$ forms that when evaluated at $\tau_\nn$ equal to $E_2$ evaluated at $\tau_\nn$.

Define the modular form $\Phi \colonequals  d \phi - \phi^2$. We also obtain operators $\vartheta_{\phi}^{[n]}$ satisfying the recurrence relation 
\begin{align} \vartheta_{\phi}^{[0]} g = g, \vartheta_{\phi}^{[r+1]} g =  \vartheta_{\phi} ( \vartheta_{\phi}^{[r]} g) + r (k+r-1) \Phi (\vartheta_{\phi}^{[r-1]} g).
\end{align}

To rigorously evaluate $g_i (\tau_\nn)$ and  $(\vartheta_\phi g_i) (\tau_\nn)$ as algebraic numbers we need to bound the denominators of these values that, a priori, lie in $K$ by \eqref{eqn:deltafinK}.
With work, one can explicitly bound these denominators. Let $t \colonequals (p -1)^{-1}$. By \cite[Theorem~B.3.2.1]{BDPconiveau}, when $\ord_p(N) = 1, 2,$ or $3$, the denominators are  at most $p^{\lceil k (t+1) \rceil}$, $p^{\lceil 3kt (2t+1) \rceil}$, or $p^{\lceil 2kt (t+1)(t+2) \rceil}$ respectively. 

We give some intuition following \cite[Appendix~B]{BDPconiveau}.
Consider the $\calO_K$-module $M_k(\Gamma_0(N), \calO_K)$ consisting of weight $k$ modular forms $g$ for $\Gamma_0(N)$ whose $q$-expansions have coefficients in $\calO_K$. For $g \in M_k(\Gamma_0(N), \calO_K)$, the evaluation of $g(A, A[\nn], \omega_A)$ belongs to $\calO_K[1/N]$.
One can construct a finite index $\calO_K$-module $M_{k, \calO_K}$ of the module $M_k(\Gamma_0(N), \calO_K)$ such{ that for every $g \in M_{k, \calO_K}$, the evaluation of $g(A, A[\nn], \omega_A)$ is $\calO_K$-integral. This is tensor-compatible with the corresponding $\Z$-modules:
\[ \calO_K \otimes_\Z M_{k, \Z} = M_{k, \calO_K} \otimes_\Z M_k (\Gamma_0(N), \Z)  = M_k(\Gamma_0(N), \calO_K)\]
so the exponent of the finite abelian group $M_k (\Gamma, \Z) / M_{k, \Z}$ multiplies $M_k(\Gamma_0(N), \calO_K)$ into $M_{k, \calO_K}$
and is an explicit bound on the denominator. (In fact, it gives a much stronger result: this gives a bound on all denominators for all number fields.)

\begin{example}
\label{ex37a1}
Let $f$ be the modular form  \href{https://www.lmfdb.org/ModularForm/GL2/Q/holomorphic/37/2/a/a/}{\texttt{37.2.a.a}} with weight $2$ and level $37$. We have a basis of weight $2$ forms on $\Gamma_0(37)$ given by
\begin{align*}
f_1 &= 1 - 2q^3 + 10q^4 + 2q^5 + 14q^6 + 6q^7 + 10q^8 + 18q^9 + O(q^{10}) 
    \notag\\
f_2 &= q + q^3 - 2q^4 - q^7 - 2q^9 +O(q^{10})\\
f_3 &= q^2 + 2q^3 - 2q^4 + q^5 - 3q^6 - 4q^9 + O(q^{10}).\notag
\end{align*}
Let $\tau_\nn = \frac{-27 + \sqrt{-11}}{2 \cdot 37}$. Let $p = 5$.
By the discussion above, we can bound the denominators of $f_i(\tau_\nn)$  and $E_2^*(\tau_\nn)$ by $37^{\lceil 2 p / (p-1) \rceil} = 37^3$.
We can compute $E_2^*(\tau_\nn)/(\Omega_A)^2 = 4400 -3696 \sqrt{-11}$. We also compute $f_i(\tau_\nn)$:
\begin{align*}
f_1(\tau_\nn)/\Omega_A^{2} &= 2420 +572 \sqrt{-11}\notag\\
f_2(\tau_\nn)/ \Omega_A^{2} &=-726 +154 \sqrt{-11} \\
f_3(\tau_\nn)/\Omega_A^{2} &= -1210 -286 \sqrt{-11}.\notag
\end{align*}
Therefore $\phi = 1/12 E_2 -1/12(-28/11 f_1 -160/11 f_2)$.
\end{example}

Given an expression $M_*(\Gamma_0(N)) \simeq \Q[g_1, \dots,  g_n]/I$, we can use the iterative relation
\begin{align}
\vartheta_{\phi}^{[r+1]} f =  \vartheta_{\phi} ( \vartheta_{\phi}^{[r]} f) + r (r +1) \Phi (\vartheta_{\phi}^{[r-1]} f) \text{ for } r \geq 1
\end{align}
and apply the Leibniz rule to the monomials in $\vartheta_{\phi}^{[r]} f$ to apply $\vartheta_\phi$ iteratively.
By \cite[Corollary 1.5.1]{voight2019canonical} to obtain generators and relations for $ M_*(\Gamma_0(N))$  we need generators up to degree $6$ and relations up to degree $12$.

\begin{example}
Continuing Example \ref{ex37a1}, we can use these methods to compute
\begin{align*}
 f (\tau_\nn)/\Omega_A^{2} = 1694 + 726 \sqrt{-11} \text{ and } \delta f (\tau_\nn) /\Omega_A^{4} = 532400 - 447216 \sqrt{-11}.
 \end{align*}
 \end{example}
Note we have fixed an embedding $K \to \Q_p$ given by the splitting $p = \pp \overline{\pp}$. We define $\pp$ to be the prime with valuation $1$. Then $K_\pp \simeq \Q_p$.

\begin{remark}
\label{rem:embeddings}
When $[E_f:\Q] > 1$, then  $\delta^{r-1} f (\tau_\nn) / \Omega_A^{2r}$ belong to the compositum of $K$ and $E_f$ and we embed $e: E_f \to \Q_p$. Changing the embedding $e$ is equivalent to picking a Galois conjugate $f^\sigma$ for $\sigma \in \Gal(E_f/\Q)$.  Since the operator $d$ is Galois-equivariant, and we have the equality \eqref{eqn:deltafinK}, it follows\[\delta^{r-1} f^\sigma(\tau_n) = \sigma (\delta^{r-1} f(\tau_n)).\]
In this way, we can obtain all $[E_f:\Q]$ $p$-adic values of $\delta^{r-1} f^\sigma(\tau_n)$ for $\sigma \in \Gal(E_f/\Q)$ from knowing a single value $\delta^{r-1} f(\tau_n)$.
\end{remark}

\subsection{Evaluating outside of the range of interpolation}
\label{sec:outrangeinterp}

We now discuss an adaptation of Rubin's method to compute the value $L_p(f,1)$ outside of the range of interpolation.

Let $r \in \N$. Let $\chi_r  \in S$ be the Galois character associated to the Hecke character with infinity type $(1 + r, 1 -r)$. Define
\begin{align}
\label{def:ell}
\ell(r) \colonequals L_p(f,  \chi_r) \Omega_p^{- 4 r}.
\end{align}
We want to compute $\ell(0)$. Since this is not in the range of interpolation, we compute auxiliary values  $\ell((p-1)), \ell(2(p-1)), \dots, \ell(B (p-1)) $ in the range of interpolation and recover $\ell(0)$ modulo $\pp^{B}$ from a version of \cite[Theorem~9, Proposition~7]{RubinMonodromy} for the anticyclotomic $p$-adic $L$-function of Bertolini, Darmon, and Prasanna.

The main result of this section is the following proposition.
\begin{proposition}
\label{MainProp}
Let $\ell(r)$ be defined as above. Then for any $B \in \N$, we have
\begin{align} \ell(0)^{(p-1)/2}  \equiv \sum_{j=1}^B \left( \sum_{i = j}^B (-1)^{j-1} \binom{i-1}{j-1} \right) \ell(j(p-1))^{(p-1)/2} \mod \pp^B.\end{align}
Furthermore, $\ell(0) \equiv \ell((p-1)^2/2) \mod \pp$.
\end{proposition}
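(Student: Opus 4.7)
The plan is to simplify the combinatorics on the right-hand side, reduce the first congruence to a Mahler-integrality statement for the function $h(j) := \ell(j(p-1))^{(p-1)/2}$ viewed as a $p$-adic analytic function on $\Z_p$, and then read off the second congruence from the same expansion.

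First, by the hockey-stick identity $\sum_{i=j}^{B}\binom{i-1}{j-1}=\binom{B}{j}$, the coefficient of $\ell(j(p-1))^{(p-1)/2}$ in the stated sum collapses to $(-1)^{j-1}\binom{B}{j}$. Hence the first congruence is equivalent to
\[
\sum_{j=0}^{B}(-1)^{j}\binom{B}{j}\,h(j)\;\equiv\;0\pmod{\pp^{B}},
\]
which up to sign is the $B$-th forward difference $\Delta^{B}h(0)$. If $h(j)=\sum_{k\geq 0}a_{k}\binom{j}{k}$ is the Mahler expansion, then $\Delta^{B}h(0)=a_{B}$, so it suffices to show $v_{\pp}(a_{k})\geq k$ for all $k\geq 0$.

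Second, by \cite[Theorem~5.9]{BDP1} and the interpolation property \eqref{eqn:BDPinterpolation}, the function $r\mapsto\ell(r)$ extends $p$-adic analytically to $(\Z/(p-1)\Z)\times\Z_{p}$ and on the identity component satisfies $\ell(r)=F(r)^{2}$, where $F(r):=d^{r-1}f^{[p]}(A,A[\nn],\omega_{A})$. Thus $h(j)=F(j(p-1))^{p-1}$. For $(n,p)=1$, writing $n^{p-1}=1+p\eta_{n}$ with $\eta_{n}\in\Z_{p}$ gives the identity $n^{j(p-1)-1}=n^{-1}(1+p\eta_{n})^{j}$ of $p$-adic analytic functions of $j$, and the Katz evaluation at the ordinary CM triple $(A,A[\nn],\omega_{A})$ yields an expansion
\[
F(j(p-1))\;=\;\sum_{(n,p)=1}n^{-1}a_{n}(f)\,c_{n}\,(1+p\eta_{n})^{j}
\]
with $p$-integral coefficients $c_{n}\in\Z_{p}$ going to $0$, encoding the $p$-adic value of the $q$-expansion at the canonical trivialization $\omega_{A}$.

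Third, I claim the $\Z_{p}$-submodule of functions $\Z_{p}\to\Z_{p}$ of the form $j\mapsto\sum_{i}x_{i}(1+pc_{i})^{j}$ (with $x_{i},c_{i}\in\Z_{p}$) is closed under multiplication, because $(1+pa)^{j}(1+pb)^{j}=(1+p(a+b+pab))^{j}$. Hence $h(j)=F(j(p-1))^{p-1}$ lies in this class. Expanding each $(1+pc)^{j}=\sum_{k}\binom{j}{k}(pc)^{k}$ then gives a Mahler expansion $h(j)=\sum_{k}a_{k}\binom{j}{k}$ with $v_{\pp}(a_{k})\geq k$, which proves the first congruence. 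For the second congruence, reducing the expansion of step two modulo $\pp$ gives $(1+p\eta_{n})^{j}\equiv 1$ and hence $F(j(p-1))\equiv F(0)\pmod{\pp}$ for every $j\in\Z_{p}$; squaring and specializing at $j=(p-1)/2$ yields $\ell((p-1)^{2}/2)\equiv\ell(0)\pmod{\pp}$.

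The main obstacle is making rigorous the expansion in step two: one must identify $F(j(p-1))$ with a $p$-adically convergent sum $\sum_{(n,p)=1}n^{-1}a_{n}(f)c_{n}(1+p\eta_{n})^{j}$ with $c_{n}\in\Z_{p}$, using the Katz theory of $p$-adic modular forms, the ordinarity of $(A,A[\nn])$ at $p=\pp\overline{\pp}$, and the compatibility of $\omega_{A}$ with the canonical Serre--Tate trivialization. Only with genuinely $\Z_{p}$-integral $c_{n}$ (not merely $\Q_{p}$-valued) does the product-closure argument of step three produce $\Z_{p}$-valued Mahler coefficients for $h$.
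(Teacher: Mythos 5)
Your combinatorial reduction is fine and is in fact a transparent unpacking of what the paper gets by citing Rubin: the hockey-stick identity turns the double sum into $(-1)^{j-1}\binom{B}{j}$, the first congruence becomes the statement that the $B$-th finite difference of $h(j)=\ell(j(p-1))^{(p-1)/2}$ is divisible by $\pp^B$, and bounding Mahler coefficients by $v_\pp(a_k)\ge k$ is exactly the content of Rubin's Lemma~8/Theorem~9, which the paper applies to a function $H$ in the class $\calI$ of generalized Iwasawa functions. The genuine gap is your step two. The claimed expansion $F(j(p-1))=\sum_{(n,p)=1}n^{-1}a_n(f)\,c_n\,(1+p\eta_n)^j$ with fixed $c_n\in\Z_p$ is not available: evaluating a $p$-adic modular form at the ordinary CM triple $(A,A[\nn],\omega_A)$ is not substitution of a value for $q$ into the $q$-expansion, so there is no termwise formula with $j$-independent weights $c_n$. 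The correct statement, which is the nontrivial analytic input of \cite{BDP1}, is that the values normalized by the \emph{canonical} differential $\omega_{\mathrm{can}}$ (coming from the Serre--Tate/formal-group trivialization) are moments of a measure, i.e.\ $L_p(f)\in\Lambda^\ac$; and $\omega_A$ is \emph{not} that trivialization — they differ by the $p$-adic period $\Omega_p\in\hat{\calO}^\times$, so $F(r)=\Omega_p^{-2r}\,d^{r-1}f^{[p]}(A,A[\nn],\omega_{\mathrm{can}})$. Your expansion would force $j\mapsto(\Omega_p^{-2(p-1)})^j$ to be an Iwasawa function with $\Z_p$-coefficients, which is not known and not true in general; indeed, if your step two held, the congruence would follow for $\ell(j(p-1))$ itself, with no $(p-1)/2$-th power — a stronger statement than the proposition, and precisely the statement that the period obstruction prevents. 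This is why the proposition (following Rubin) is formulated for $\ell^{(p-1)/2}$: the paper sets $H(s)=(\Omega_p^{(p-1)^2})^{-2s}L_p(f,\chi_{s(p-1)})^{(p-1)/2}$, and only the $(p-1)^2$-th power of $\Omega_p$ is known to lie in $1+\pp\hat{\calO}$ (de Shalit), which is what puts the period factor, and hence $H$, into $\calI$.

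To repair your argument along your own lines, replace step two by the two inputs the paper actually uses: (i) $L_p(f)\in\Lambda^\ac$, so $s\mapsto L_p(f,\chi_{s(p-1)})=\langle\chi_{(p-1)}\rangle^s(L_p(f))$ lies in $\calI$ (this is where "of the form $\sum_i x_i u_i^s$ with $u_i$ principal units" is legitimately obtained, but with $\hat{\calO}$- rather than $\Z_p$-coefficients); and (ii) $\Omega_p^{(p-1)^2}\in 1+\pp\hat{\calO}$, so the period correction $(\Omega_p^{(p-1)^2})^{-2s}$ is also in $\calI$. Your multiplicative-closure and Mahler-coefficient computation then applies verbatim to $H=f_1^{(p-1)/2}f_2$ and yields the first congruence; the second congruence follows, as in the paper, from $f_1(s)\equiv f_1(s')\bmod\pp$ together with $\Omega_p^{(p-1)^2}\equiv 1\bmod\pp$, rather than from the unproved expansion of $F$.
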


Assuming $\ell(0) \not\equiv 0 \mod \pp$, Proposition \ref{MainProp} allows us to uniquely recover $\ell(0)$ from the auxiliary values $\ell(j (p-1))$. We now prove Proposition \ref{MainProp}.

Following Rubin, we introduce a ring $\calI$ of generalized Iwasawa functions. A function $g$ on $\Z_p$ is in $\calI$ if there exist units $u_1, \dots, u_m \in 1+ \pp \hat{\calO}$ and a power series $H \in \hat{\calO}[[X_1, \dots, X_m]]$ such that $g(s) = H(u_1^s - 1, \dots, u_m^s-1)$ for all $s \in \Z_p$.

Recall $\chi_{i(p-1)}  \in S$ is a Galois character associated to a Hecke character of $K$ with infinity type $(1 + i(p-1), 1 -i (p -1))$. By composing with a projection arising from $\Z_p^\times \simeq (\Z/p\Z)^\times \times (1+ p \Z_p)$, we have
\begin{align} 
\label{eqn:chisubi}
\langle \chi_{i(p-1)} \rangle : \Gamma^- \to \Z_p^\times \to 1+ p \Z_p
\end{align} since $\chi_{i(p-1)}$ already takes values in $1+ p \Z_p$ \cite[\S II.4.17]{deShalitCMIwasawa}. For $F \in \Lambda^\ac$, we have $\langle \chi_{i(p-1)} \rangle(F) \in \widehat{\overline{\Q}}_p$, and furthermore for $s \in \Z_p$ we can define $\chi_{s(p-1)} \colonequals \langle\chi_{(p-1)} \rangle^s$ and evaluate  $ \chi_{s(p-1)} (F)\in \widehat{\overline{\Q}}_p$ by continuity.

Define
\begin{align}
\label{def:H}
H(s) \colonequals (\Omega_p^{(p-1)^2})^{-2s} L_p(f, \chi_{s(p-1)})^{(p-1)/2}.
\end{align}
By \cite[\S II.4.3(10)]{deShalitCMIwasawa}, we have
\begin{align}
\label{omegapcong}
\Omega_p^{(p-1)^2} \in 1 + \pp \hat{\calO}
\end{align}
so $H$ is well-defined.
For $i \in \Z$, note that 
\begin{align}
H(i) = \ell((p-1) i )^{(p-1)/2}.
\end{align}

Analogously to \cite[Proposition~7]{RubinMonodromy}, we have the following proposition.
\begin{proposition}\label{Prop7} Let $\ell(r)$ and $H$ be defined as in \eqref{def:ell} and \eqref{def:H}. Then
\begin{enumerate}
\item $H \in \calI$.
\item \label{congruence} $\ell(0) \equiv \ell((p-1)^2/2) \mod \pp$.
\end{enumerate}
\end{proposition}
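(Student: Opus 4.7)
The plan is to deduce part~(1) from the identification of $\Lambda^{\ac}$ with a power series ring, and then obtain part~(2) as an immediate consequence of the general fact that elements of $\calI$ are constant modulo $\pp$. Throughout I will work with the explicit description of $\calI$ given in the paragraph preceding the proposition.

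For part~(1), I would fix a topological generator $\gamma$ of $\Gamma^- \cong \Z_p$ and use the resulting isomorphism $\Lambda^{\ac} \cong \hat{\calO}[[T]]$, $T \mapsto \gamma - 1$, to write $L_p(f) = F(T)$ for some $F \in \hat{\calO}[[T]]$. Set $u := \langle \chi_{(p-1)} \rangle(\gamma)$, which by definition of $\langle\,\cdot\,\rangle$ in \eqref{eqn:chisubi} lies in $1 + p\Z_p \subset 1 + \pp\hat{\calO}$. Continuity of the assignment $s \mapsto \langle \chi_{(p-1)}\rangle^s$ gives $\langle \chi_{s(p-1)}\rangle(\gamma) = u^s$, so $L_p(f, \chi_{s(p-1)}) = F(u^s - 1)$, and raising to the integer power $(p-1)/2$ yields $L_p(f,\chi_{s(p-1)})^{(p-1)/2} = G(u^s-1)$ with $G(T) := F(T)^{(p-1)/2} \in \hat{\calO}[[T]]$. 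By \eqref{omegapcong}, $w := (\Omega_p^{(p-1)^2})^{-2} \in 1+\pp\hat{\calO}$, and $(\Omega_p^{(p-1)^2})^{-2s} = w^s = 1 + (w^s - 1)$. Assembling the factors,
\begin{align*}
H(s) = H_{\mathrm{pow}}(u^s - 1,\, w^s - 1), \qquad H_{\mathrm{pow}}(X_1,X_2) := (1+X_2)\, G(X_1) \in \hat{\calO}[[X_1,X_2]],
\end{align*}
exhibits $H$ as a generalized Iwasawa function.

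For part~(2), the key observation is that for any $g \in \calI$ with presentation $g(s) = h(u_1^s - 1,\dots,u_m^s-1)$ and $u_i \in 1+\pp\hat{\calO}$, we have $u_i^s \in 1+\pp\hat{\calO}$ for every $s \in \Z_p$ (since $u_i^s = \exp(s \log u_i)$ converges there), so $u_i^s - 1 \in \pp\hat{\calO}$ and hence $g(s) \equiv h(0,\dots,0) \equiv g(0) \bmod \pp$. Applied to $H$ at $s=0$ and $s=(p-1)/2$, this gives $\ell(0)^{(p-1)/2} \equiv \ell((p-1)^2/2)^{(p-1)/2} \bmod \pp$. The main obstacle is to upgrade this to the honest congruence $\ell(0) \equiv \ell((p-1)^2/2) \bmod \pp$: in $\overline{\F}_p^\times$ the $(p-1)/2$-th power map has nontrivial kernel, so some additional input is required. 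I would resolve this by exploiting the square structure afforded by Waldspurger's formula (Theorem~\ref{thm:waldspurger}): in the range of interpolation, $\ell(r)$ is the square of the Shimura--Maass quantity $(\delta^{r-1}f(A,A[\nn],\omega_A))(\bar\alpha\Omega_K)^{-2r}$ times an Euler factor, and the $p$-adic interpolation $d^{r-1}f^{[p]}$ of Section~\ref{sec:inrangeinterp} gives a square root of $H$ that lies in $\calI$ by the same argument used for $H$. Applying the ``constant modulo $\pp$'' principle to this square root directly yields $\ell(0) \equiv \ell((p-1)^2/2) \bmod \pp$.
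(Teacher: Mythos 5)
Your part~(1) is correct and is essentially the paper's own argument: the paper writes $H=f_1^{(p-1)/2}f_2$ with $f_1(s)=L_p(f,\chi_{s(p-1)})$ and $f_2(s)=(\Omega_p^{(p-1)^2})^{-2s}$ and notes both factors lie in $\calI$; your power-series presentation merely makes the specialization of $L_p(f)\in\Lambda^\ac$ along $s\mapsto\langle\chi_{(p-1)}\rangle^s$ explicit. The problem is part~(2). You rightly observe that the constancy-mod-$\pp$ principle applied to $H$ only controls $(p-1)/2$-th powers, but your proposed repair is not justified as stated. The square root furnished by Section~\ref{sec:inrangeinterp} is $c(r):=d^{r-1}f^{[p]}(A,A[\nn],\omega_A)$, with $\ell(r)=c(r)^2$ (so it is a square root of $\ell(s(p-1))$, not of $H$), and your argument needs $s\mapsto c(s(p-1))$ to lie in $\calI$. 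This does not follow ``by the same argument used for $H$'': that argument uses only $L_p(f)\in\Lambda^\ac$ together with \eqref{omegapcong}, and the entire reason $H$ is defined with the exponent $(p-1)/2$ is that the only control available on the $p$-adic period is on $\Omega_p^{(p-1)^2}$. The $\omega_A$-normalized value $c(s(p-1))$ differs from the quantity that specializes an element of $\Lambda^\ac$ by a factor involving $\Omega_p^{-2s(p-1)}$, and neither the paper nor your proposal shows $\Omega_p^{2(p-1)}\in 1+\pp\hat{\calO}$; equivalently, it is not established that $\ell(s(p-1))$ itself, let alone a square root of it, is a generalized Iwasawa function. Filling this gap would require importing more of the Bertolini--Darmon--Prasanna construction than the interpolation statement quoted in the paper, so as written this step does not go through.

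The paper's proof of~(2) avoids root extraction altogether and uses only what you already set up in part~(1): apply the constancy-mod-$\pp$ principle to $f_1(s)=L_p(f,\chi_{s(p-1)})\in\calI$ (your $F(u^s-1)$), giving $f_1(0)\equiv f_1((p-1)/2)\bmod\pp$, and then invoke \eqref{omegapcong} once more to absorb the period: $\ell(0)=f_1(0)\equiv\Omega_p^{-2(p-1)^2}f_1((p-1)/2)=\ell((p-1)^2/2)\bmod\pp$. I recommend replacing your square-root step with this two-line argument.
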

\begin{proof}
We define functions $f_1$ and $f_2$ by
\begin{align}
f_1(s) &\colonequals L_p(f, \chi_{s(p-1)})\\
f_2(s) &\colonequals (\Omega_p^{(p-1)^2})^{-2s}. \notag
\end{align}
Then $H = f_1^{(p-1)/2} f_2$. We know $f_1 \in \calI$ since $\chi_{i(p-1)}$ is a character into $1+ p \Z_p$ for all $i \in \N$ (see \eqref{eqn:chisubi}) and by \eqref{omegapcong} we know $f_2 \in \calI$, so $H \in \calI$.

Finally, since $f_1(s) \equiv f_1(s') \mod \pp$ for all $s, s'\in \Z_p$ we have
\begin{align}
\ell(0) = f_1(0) \equiv \Omega_p^{-2 (p-1)^2} f_1((p-1)/2) = \ell((p-1)^2/2) \mod \pp.
\end{align}
\end{proof}

\begin{remark}
\label{rem:highercongruences} Proposition \ref{Prop7} \eqref{congruence} is only helpful if $ \ell((p-1)^2/2) \not\equiv 0 \mod \pp$. More generally, one can see that by \eqref{omegapcong}, for $n \geq 1$ we have
\begin{align}
\ell(0) = f_1(0) \equiv \Omega_p^{-2 (p-1)^2 p^{n-1}} f_1((p-1)p^{n-1}/2) = \ell((p-1)^2 p^{n-1}/2) \mod \pp^n.
\end{align}
The main difficulty in applying this congruence is computing $\ell((p-1)^2 p^{n-1}/2)$.
\end{remark}

From here it is straightforward to follow Rubin's proof to obtain a proof of Proposition~\ref{MainProp}: we give a brief summary. He defines a difference operator $\Delta$ on $\calI$ by $\Delta(g)(s) \colonequals g(s+1) - g(s)$. If $g \in \calI$ then $\Delta(g) \in \pp \calI$ \cite[Lemma~8]{RubinMonodromy}.

By inverting $(1 + \Delta)^{-1} = \sum_{i=0}^\infty (-1)^i \Delta^i$ and applying the congruence, we obtain the desired formula \cite[Theorem~9]{RubinMonodromy}
\begin{align}
\label{h0formula}
g(0) = \sum_{j = 1}^B  \left( \sum_{i =j}^B (-1)^{j-1} \binom{i-1}{j-1} \right) g(j) \mod \pp^B.
\end{align}
By applying this to $H \in \calI$ we can compute the special values.

\begin{table}[!ht]
\begin{center}
\begin{tabularx}{268pt}{ccrrc}
$f$ & $p$ & $D$ & time & Sturm Bound \\
\midrule
\href{https://www.lmfdb.org/ModularForm/GL2/Q/holomorphic/37/2/a/a/}{\texttt{37.2.a.a}} & $5$ & $-11$ & $13.970$ & $7$ \\
 \href{https://www.lmfdb.org/ModularForm/GL2/Q/holomorphic/43/2/a/a/}{\texttt{43.2.a.a}} & $5$ & $-19$ &  $18.480$ & $8$\\
 \href{https://www.lmfdb.org/ModularForm/GL2/Q/holomorphic/58/2/a/a/}{\texttt{58.2.a.a}} & $11$ & $-7$ &$1583.380$  &  $15$\\
  \href{https://www.lmfdb.org/ModularForm/GL2/Q/holomorphic/61/2/a/a/}{\texttt{61.2.a.a}} & $5$ & $-19$ & $34.240$ &  $11$\\
 \href{https://www.lmfdb.org/ModularForm/GL2/Q/holomorphic/83/2/a/a/}{\texttt{83.2.a.a}}& $5$ &$-19$ & $73.400$ & $14$\\
 \href{https://www.lmfdb.org/ModularForm/GL2/Q/holomorphic/89/2/a/a/}{\texttt{89.2.a.a}} & $3$ &$-11$ &$15.730$ & $15$ \\
\href{https://www.lmfdb.org/ModularForm/GL2/Q/holomorphic/77/2/a/a/}{\texttt{77.2.a.a}}& $5$ & $-19$ & $65.350$& $16$\\
  \href{https://www.lmfdb.org/ModularForm/GL2/Q/holomorphic/101/2/a/a/}{\texttt{101.2.a.a}}&$5$ & $-19$ & $95.850$ & $17$ \\
 \href{https://www.lmfdb.org/ModularForm/GL2/Q/holomorphic/131/2/a/a/}{\texttt{131.2.a.a}}& $5$ & $-19$& $326.160$ & $22$
\end{tabularx}
\end{center}
\caption{Timings for fixed $B = 5$ and varying $N$ for modular forms with rational Fourier coefficients} 
\label{tableoftimingsfixedB}
\end{table}

\begin{remark}
We have written code to compute $\ell(0)$ and $L_p(f, 1)$ in Magma V2.26-11. It can be found at \cite{GitRepoChabautyGZ}. 
See Table \ref{tableoftimingsfixedB} for some timings. These timings were done on a 2017 Macbook Pro with a 2.3 GHz Dual-Core Intel Core i5 processor and 8 GB of RAM. All times are given in seconds.
\end{remark}

\begin{table}[h!]

\begin{subtable}[h]{0.45\textwidth}
\begin{center}
\begin{tabularx}{120pt}{cr}
$r$ & $\ell(r) \mod \pp^{10}$\\
\midrule
$4$  & $-2341944$\\
$8$  & $830906$\\
$12$ & $-3933069$\\
$16$ &$-35494$ \\
$20$ & $1760756$ \\
$24$ & $1706556$\\
$28$ & $1972781$\\
$32$ & $-3662194$\\
$36$ & $3734381$\\
$40$ &$ 4015256$
\end{tabularx}
\end{center}
\caption{$\ell(r)$ for \href{https://www.lmfdb.org/ModularForm/GL2/Q/holomorphic/37/2/a/a/}{\texttt{37.2.a.a}}} 
\label{tableofvalues37}
\end{subtable}
\hfill
\begin{subtable}[h]{0.45\textwidth}
\begin{center}
\begin{tabularx}{120pt}{cr}
$r$ & $\ell(r) \mod \pp^{10}$\\
\midrule
$6$  & $19467645$\\
$12$  & $27057027$\\
$18$ & $-443168$\\
$24$ &$72608418$ \\
$30$ & $-32171562$ \\
$36$ & $-95344303$\\
$42$ & $-68492493$\\
$48$ & $-129070518$\\
$54$ & $-81233717$\\
$60$ &$ 24574313$
\end{tabularx}
\caption{$\ell(r)$ for $f$ in \href{https://www.lmfdb.org/ModularForm/GL2/Q/holomorphic/85/2/a/b/}{\texttt{85.2.a.b}}}
\label{tableofvalues85f}
\end{center}
\end{subtable}
\caption{Example computations of $\ell(r)$}
\end{table}

\begin{example}
\label{ex:log37}
Let $f$ be the newform with LMFDB label \href{https://www.lmfdb.org/ModularForm/GL2/Q/holomorphic/37/2/a/a/}{\texttt{37.2.a.a}}, $D = -11$, and $p = 5$. We compute the values of $\ell(r)$ in Table \ref{tableofvalues37}.
So Proposition~\ref{MainProp} implies that 
\begin{align*}
H(0) = L_p(f, 1)^2   \equiv  2502536 \mod \pp^{10}
\end{align*}
and 
\begin{align*}
 L_p(f, 1)\equiv \ell(8) \equiv 830906 \mod \pp
\end{align*}
so 
\begin{align*}
 L_p(f, 1) \equiv 4635631 \mod \pp^{10}.
\end{align*}
\end{example}

\begin{example}
For \href{https://www.lmfdb.org/ModularForm/GL2/Q/holomorphic/77/2/a/a/}{\texttt{77.2.a.a}}, $D = -19$, and $p = 5$,  we can similarly compute $\ell(0) \mod B= 7$:
\begin{align*}
  L_p(f, 1) \equiv 4 + 2\cdot5 + 4\cdot5^2 + 3\cdot5^3 + 5^4 + 3\cdot5^6  \mod \pp^{7}.
\end{align*}
This agrees with the computed value of $\left( \frac{1 - a_p(f) +p}{p}\right)^{2}\log(\pi(y_K))^2$ on the associated elliptic curve $E$. In this case $\pi_E(y_K)$ has index $2$ in $E(\Q)$.
\end{example}

\begin{example}
\label{ex:log85}
Let $f$ be a newform in the orbit \href{https://www.lmfdb.org/ModularForm/GL2/Q/holomorphic/85/2/a/b/}{\texttt{85.2.a.b}}. Then $f$ has coefficient field $\Q(\sqrt{2})$, and we denote
\begin{align*}
f(q) &\colonequals q + (\sqrt{2} - 1)q^2 + (-\sqrt{2} - 2)q^3 + (-2\sqrt{2} + 1)q^4 - q^5 - \sqrt{2}q^6 + O(q^7)\end{align*}
Let $D = -19$, and $p = 7$ and fix the embedding $\sqrt{2}\mapsto 4 + 5\cdot 7 + 4\cdot 7^2 + O(7^4)$.
We compute the values of $\ell(r)$ in Table \ref{tableofvalues85f}.

So Proposition~\ref{MainProp} implies that 
\begin{align*}
 L_p(f, 1) \equiv -25026440\cdot 7^2 \mod \pp^{10}.
\end{align*}
Per Remark \ref{rem:embeddings}, we can use the same algebraic values used to compute Table \ref{tableofvalues85f} under the other $p$-adic embedding to compute
\begin{align*}
 L_p(f^\sigma, 1) \equiv -107584760\cdot 7^{-2} \mod \pp^{10},
\end{align*}
where $\sigma \in \Gal(\Q(\sqrt{2})/\Q)$ is the nontrivial automorphism.

In other words, let $X_0(85)^* \colonequals X_0(85) / \langle w_5, w_{17} \rangle$ be the full  Atkin--Lehner quotient of $X_0(85)$ with Jacobian $J_0(85)^*$ and $\pi:J_0(85) \to J_0(85)^*$.
We have determined $(\log_{f dq/q} \pi(y_K))^2$ and $(\log_{f^\sigma dq/q} \pi(y_K))^2$, the values of the square of the logarithm of the $\pi(y_K)$ on the Jacobian of $X_0(85)^*$ with respect to the basis $f dq/q, f^\sigma dq/q$ of $H^0(X_0(85)^*, \Omega^1)$.
\end{example}

\begin{table}[h!]
\begin{center}
\begin{tabularx}{260pt}{cl}
$r$ & $\ell(r) \mod \pp^{9}$\\
\midrule
$10$ & $-297386117\cdot \nu^2 + 592900508\cdot \nu + 979493595$\\
$20$ & $ 476184186 \cdot \nu ^2 - 623749256\cdot \nu - 274455717 $\\
$30$ & $ -716852945\cdot \nu^2 + 898090404\cdot \nu - 230653046 $\\
$40$ & $ -813805284\cdot \nu^2 +944452192 \cdot \nu- 205004852$\\
$50$ & $652023685\cdot \nu^2 + 765243602\cdot \nu + 477480226$ \\
$60$ & $-730757902\cdot \nu^2 + 146316051\cdot \nu - 343684998$ \\
$70$ & $-529440885\cdot \nu^2  - 683691281\cdot \nu +99641402$\\
$80$ & $-834057296\cdot \nu^2+ 568271068\cdot \nu - 580138952$\\
$90$ & $84670657\cdot \nu^2+476068012\cdot \nu +  511028466 $\\
\end{tabularx}
\end{center}
\caption{$\ell(r)$ for \href{https://www.lmfdb.org/ModularForm/GL2/Q/holomorphic/169/2/a/b/}{\texttt{169.2.a.b}}} 
\label{tableofvalues169}
\end{table}

\begin{example}

Let $f$ be a newform in the orbit \href{https://www.lmfdb.org/ModularForm/GL2/Q/holomorphic/169/2/a/b/}{\texttt{169.2.a.b}}. Then $f$ has coefficient field $E_f = \Q(\zeta_{14})^+$, and we denote
\begin{align*}
f(q) &\colonequals q + (-\nu^2 + 1)q^2 + (\nu^2 - \nu - 2)q^3 + (\nu^2 + \nu - 2)q^4 + (-\nu^2 + \nu)q^5 +  O(q^6)\end{align*} where $\nu$ satisfies the minimal polynomial $x^3 - x^2 - 2x +1$.
Let $D = -43$, and $p = 11$. Then $p$ is inert in $E_f$, so does not satisfy one of our assumptions. 
However, it is still possible to compute $L_p(f, 1)$ by working in an extension of $\Q_p$.
The $\mathfrak{p}$-adic completion of the compositum $KE_f$ is a degree 3 unramified extension of $\Q_p$. Let $\nu \mapsto 16978\cdot \nu ^2 + 53324\cdot \nu - 31046 + O(11^5)$ be the embedding of $E_f$ into this extension. 

We compute the values of $\ell(r)$ in Table \ref{tableofvalues169}. Then
Proposition~\ref{MainProp} implies that 
\begin{align*}
 L_p(f, 1) \equiv  -1049872412\cdot \nu^2 - 976527363\cdot \nu + 889741537     \mod \pp^9.
\end{align*}
By considering the other two embeddings from $E_f$ into the $\mathfrak{p}$-adic completion of $KE_f$, we obtain the values of $L_p(f^\sigma, 1)$ for $\sigma \in \Gal(E_f/\Q)$. This allows us to compute $(\log_{f^\sigma dq/q} y_K)^2$ for the genus $3$ modular curve $X_0(169)^+$ which is isomorphic over $\Q$ to the split Cartan modular curve $X_{\mathrm{s}}(13)$.
\end{example}

\section{Perrin-Riou's \texorpdfstring{$p$}{p}-adic Gross--Zagier formula}
\label{Ch:Height}

Recall that $f \in S_2(N)$ is a weight $2$ newform for $\Gamma_0(N)$.
In this section we discuss the computation of the cyclotomic $p$-adic height of the $f$-isotypical component of the Heegner point using the $p$-adic Gross--Zagier formula of Perrin-Riou \cite{PerrinRiouInvent}. Like the anticyclotomic $L$-function of the previous section, Perrin-Riou's $p$-adic $L$-function $\calL_p(f)$ also interpolates the central value of the Rankin $L$-series $L(f, \chi, 1)$ for certain Hecke characters $\chi$, and she also provides a $p$-adic Gross--Zagier formula with a special value at $\mathbf{1}$. The nature of Perrin-Riou's construction of the $p$-adic Rankin $L$-series associated to $f$ and $K$ leads $\mathbf{1}(\calL_p(f))$ to vanish, and instead she considers the derivative in the cyclotomic direction \eqref{defn:derivcyclo}, which she shows is proportional to the $p$-adic height of the Heegner point.

We outline how to compute the derivative $\calL_p(f)$ at $\mathbf{1}$ in the cyclotomic direction, as well as the other constants appearing in the $p$-adic Gross--Zagier formula, and therefore the height of the Heegner point. 
In order to do this, we will use the relationship between Perrin-Riou's $p$-adic $L$-function and the $p$-adic $L$-function of Amice--V\'elu and Vishik. We then use the theory of overconvergent modular symbols \cite{PSOverconvergent} to compute values of the latter $p$-adic $L$-function.

For this section we have the following assumptions.
\begin{assumption}\label{hyp:heights}
\hfill
\begin{enumerate}
\item Assume $p>2$ is a prime number not dividing $N$;
\item assume $p$ splits in $E_f$, the coefficient field of $f$, and fix an embedding $e:E_f \to \Q_p$;\label{hyp:Efsplitting2}
\item assume $f^\sigma$ is ordinary in $e$, that is, $e(a_p(f^\sigma))$ is a unit for all $\sigma \in \Gal(E_f/\Q)$; \label{hyp:ordinaryp}
\item assume $f$ has analytic rank $1$;
\item let $K = \Q(\sqrt{D})$ be an imaginary quadratic field of class number $1$ with odd discriminant $D < - 3$; \label{hyp:classnumber1v2}
\item assume $p$ splits in $K$; \label{hyp:psplitK}
\item assuem every prime $q$ dividing $N$ splits in $K$.
\end{enumerate}
\end{assumption}
Like in the previous section, Assumptions \eqref{hyp:Efsplitting2}, \eqref{hyp:psplitK}, and \eqref{hyp:classnumber1v2} are simplifying assumptions to avoid working in field extensions of $\Q_p$. We require Assumption \eqref{hyp:ordinaryp} for the construction of the $p$-adic $L$-function: \cite{Kobayashi} constructs an analogous $p$-adic $L$-function in the case of supersingular reduction.

Let $K_\infty / K$ be the (unique) $\Z_p^2$-extension of $K$ with Galois group $\Gamma \colonequals \Gal(K_\infty /K)$. Write $\calO$ for the ring of integers in $\overline{\Q}_p$.  Let $\psi: \Gamma \to \calO^\times$ be a finite order character. The theta series
\begin{align}
\Theta_\psi \colonequals  \sum_{\aa \subset \calO_K}\psi(\aa) q^{\mathrm{Nm}(\aa)}
\end{align}
is a weight $1$ modular form. We denote the Rankin--Selberg convolution by $L(f, \psi, s) \colonequals L(f, \Theta_\psi, s)$.
Perrin-Riou constructs a $p$-adic $L$-function $\calL_p(f) \in \Z_p[[\Gamma ]]$ characterized by the interpolation property that for any finite order character $\psi: \Gamma \to \calO^\times$, the values
\begin{align}
\label{eqn:PRinterpolation}
\psi(\calL_p(f)) \doteq L(f,  \psi , 1)
\end{align}
are proportional. (As in \eqref{eqn:BDPinterpolation}, the left hand side is our notation for evaluation $\calL_p(f)$ at $\psi$ in the range of interpolation.)
Using the functional equation for $\calL_p(f)$ we can show that that $\calL_p(f)$ vanishes at the trivial character $\mathbf{1}$. 

Perrin-Riou relates the derivative of this $p$-adic $L$-function at $\mathbf{1}$ to a $p$-adic height pairing defined by Schneider and Mazur--Tate \cite{Schneider,MazurTateBiext} on abelian varieties. This coincides also with the height pairing of Coleman--Gross \cite{ColemanGross} in the case of Jacobians, when the required splitting of the Hodge filtration is given by the unit root subspace of $H^1(X_{\Q_p}, \Omega^1)$ \cite{Colemanbiext}.

This height pairing $\langle \cdot, \cdot \rangle_{\ell_K}$ depends on a choice of id\`ele class character:
\begin{align*}
\ell: \A_K^\times/K^\times \to \Q_p
\end{align*}
equivalently, by class field theory, a homomorphism $\ell: \Gal(K_\infty / K) \to \Q_p$, which we will take to be the cyclotomic character.  Let $\ell_K$ be the restriction of $\ell$ to $\Gal(\overline{\Q}/K)$.
We can decompose $\ell$ as the composition $\ell = \log_p \circ \lambda$ where $\lambda : \Gamma \to 1+ p \Z_p$ is the cyclotomic character.

The derivative of $\calL_p(f)$ in the direction of $\ell$ at the trivial character $\mathbf{1}$ is defined as 
\begin{align}
\label{defn:derivcyclo}
\calL'_{p, \ell}(f, 1) \colonequals  \left(\frac{d}{ds} \lambda^s(\calL_p (f))\right)\bigg|_{s = 0}.
\end{align}

Recall that $y_K \in J_0(N)(K)$ denotes the Heegner point associated to $K$, and for $\sigma \in \Gal(E_f/K)$, we write $y_{K, f^\sigma}$ for the $f^\sigma$-isotypical part of $y_K$, which belongs to $J_0(N)(K)\otimes E_f$.  

By Assumption \ref{hyp:heights} \eqref{hyp:ordinaryp}, $f$ is ordinary in $e$; let $\alpha_p(f)$ denote the unit root of the Frobenius polynomial $x^2 - e(a_p)x + p$ in $\Q_p$.
\begin{theorem}[{\cite[Theorem 1.3]{PerrinRiouInvent}}]
\label{thm:PRpadicGZ}
The function $\calL_p(f)$ vanishes at $\mathbf{1}$ and the derivative at $\mathbf{1}$ in the direction $\ell$ is
\begin{align}
\calL'_{p, \ell}(f, 1) =  \left( 1 - \frac{1}{\alpha_p(f)} \right)^{4} \langle y_{K,f}, y_{K,f} \rangle_{\ell_K}.
\end{align}
\end{theorem}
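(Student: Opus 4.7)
The plan is to follow Perrin-Riou's argument, which is a $p$-adic analog of the classical Gross--Zagier method combined with Rankin--Selberg integral representations. For the vanishing at $\mathbf{1}$: the theta series $\Theta_{\mathbf{1}}$ is essentially the Dedekind zeta function of $K$, and the Rankin--Selberg convolution $L(f, \mathbf{1}, s)$ factors as the base change $L$-function $L(f, s) L(f \otimes \chi_K, s)$, where $\chi_K$ is the quadratic character of $K$. Since $f$ has analytic rank $1$ by Assumption~\ref{hyp:heights}, we have $L(f, 1) = 0$, hence $L(f, \mathbf{1}, 1) = 0$; the interpolation property \eqref{eqn:PRinterpolation} together with continuity of $\calL_p(f)$ on $\Gamma$ then forces $\calL_p(f)(\mathbf{1}) = 0$.

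For the derivative formula, the strategy is to realize $\calL_p(f)$ as a $p$-adic measure arising from a family of Rankin--Selberg integrals of $f$ against the $p$-adically varying theta series $\Theta_\psi$. Differentiating in the cyclotomic direction $\lambda^s$ at $s = 0$ corresponds to replacing $\Theta_\mathbf{1}$ by its logarithmic derivative in the $\lambda$-direction, which produces a non-holomorphic weight-one object analogous to an Eisenstein series. A $p$-adic adaptation of Zagier's computation, or equivalently a $p$-adic Kronecker limit formula, expresses the Petersson-type pairing of $f$ with the holomorphic projection of this derivative as a sum over CM points indexed by ideals of $\calO_K$. The Fourier expansion of this projection encodes the Heegner divisor $y_K$ through its intersection numbers with the modular parametrization.

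Next, I would identify this explicit expression with the height pairing $\langle y_{K, f}, y_{K, f}\rangle_{\ell_K}$ via the Mazur--Tate/Schneider construction of the cyclotomic $p$-adic height on $J_0(N)(K)$, projected onto the $f$-isotypical component as in Section \ref{sec:Heegnerpoints}. This mirrors the classical Gross--Zagier decomposition of the N\'eron--Tate height of a Heegner divisor into a sum of local contributions at each place of $K$: the archimedean Green's function computations are replaced by their $p$-adic rigid analytic analogs, while the finite local contributions away from $p$ match on the nose via the Heegner hypothesis. The Euler factor $(1 - 1/\alpha_p(f))^4$ is produced by the two primes $\pp, \overline{\pp}$ above $p$, each contributing $(1 - 1/\alpha_p(f))^2$ from the ordinary $p$-stabilization used in the construction of $\calL_p(f)$ and the second-order expansion around $\mathbf{1}$.

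The main obstacle is the careful matching of local terms at $p$. One must verify that the implicit splitting of the Hodge filtration in the Schneider/Mazur--Tate pairing coincides with the unit root splitting required by the ordinarity Assumption~\ref{hyp:heights}\eqref{hyp:ordinaryp}; this is the content of Coleman's biextension theorem \cite{Colemanbiext} and is what makes the identification with the Coleman--Gross height possible. A second subtlety is tracking the appearance of the Euler factor through the differentiation step, since the $p$-adic interpolation introduces a factor that itself vanishes at $\mathbf{1}$ and must be expanded to first order in tandem with $L(f, \mathbf{1}, 1)$ to isolate the correct leading term.
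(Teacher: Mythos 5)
There is nothing to compare against inside the paper: Theorem~\ref{thm:PRpadicGZ} is imported verbatim from Perrin-Riou \cite[Theorem~1.3]{PerrinRiouInvent} and is used as a black box, so what you have written is an attempted reconstruction of Perrin-Riou's original proof rather than an alternative to anything in this paper. As a reconstruction, your outline has the right skeleton (Rankin--Selberg kernel, differentiation of the measure in the cyclotomic direction, holomorphic projection, comparison of Fourier coefficients with a local decomposition of the height), but at the level of detail given it is a prospectus, not a proof. The decisive content of Perrin-Riou's argument is exactly what you defer: the explicit computation of the derivative of the convolution of the theta and Eisenstein measures, the $p$-adic holomorphic projection, and, hardest of all, the evaluation of the local height contributions at the two primes $\pp,\overline{\pp}$ above $p$ through the unit-root/$\sigma$-function splitting (citing the compatibility result of \cite{Colemanbiext} names the issue but does not carry out the computation), together with the period and normalization bookkeeping that produces the exact constant; compare \eqref{PRcomp}--\eqref{eqn:hstexpanded}, where one factor $\left(1-\frac{1}{\alpha_p(f)}\right)^{2}$ comes from interpolating $L_{p,\MTT}(f^{\varepsilon})$ at the trivial character and the other is attached to the differentiated factor, and where the factor $\sqrt{|D|}/\Omega_f$ and the choice of branch $\log_p(1+p)$ for the direction $\ell$ must be tracked. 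Your heuristic that ``each prime above $p$ contributes $(1-1/\alpha_p)^2$ from the $p$-stabilization and the expansion around $\mathbf{1}$'' is morally consistent with this but is an assertion, not a derivation, and it is precisely in this step (and in the matching of the term at $p$ with the analytic side) that the theorem's difficulty lies.

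Two smaller points. The vanishing at $\mathbf{1}$ needs no continuity argument: the trivial character is itself a finite-order character of $\Gamma$, so \eqref{eqn:PRinterpolation} applies to it directly, and $L(f,\mathbf{1},1)=L(f,1)L(f^{\varepsilon},1)$ vanishes automatically because the Heegner hypothesis forces the sign of the functional equation of $L(f/K,s)$ to be $-1$; invoking analytic rank $1$ is harmless under Assumption~\ref{hyp:heights} but obscures that this vanishing is unconditional. Moreover, since \eqref{eqn:PRinterpolation} is only a proportionality, you must also check that the proportionality factor at $\mathbf{1}$ is nonzero (it is, under the ordinarity assumption, since $\alpha_p(f)$ is a unit and $\alpha_p(f)\neq 1$), otherwise the vanishing of the classical value would say nothing about $\calL_p(f)$ at $\mathbf{1}$.
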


We now discuss how to modify Theorem \ref{thm:PRpadicGZ} to obtain the heights of the images of Heegner points on quotients $J_X$ of $J_0(N)$.
These formulas lay the groundwork for doing quadratic Chabauty on quotients of $X_0(N)$.

\begin{proposition}[{\cite[(3.4.3)]{MazurTateBiext}}]
\label{prop:ProjectionFormula}
Let $g: A \to B$ be a homomorphism of principally polarized abelian varieties over $\Q$, $a \in A(K)$, and $b  \in B(K)$. 
Let $g^\vee$ denote the dual map 
$g^{\vee}: B^\vee \to A^\vee$, while $\lambda_A: A \to A^\vee$ and  
$\lambda_B: B \to B^\vee$ denote the principal polarizations.
Then
\[ \langle a,(\lambda_A^{-1} \circ g^\vee \circ  \lambda_B ) (b)\rangle_{\ell_K}  = \langle g (a), b \rangle_{\ell_K} .\]
\end{proposition}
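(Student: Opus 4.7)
The plan is to reduce the statement to the basic functoriality of the height pairing with respect to the Poincar\'e biextension, which is the underlying object from which the Mazur--Tate--Schneider pairing (and equivalently the Coleman--Gross pairing in the Jacobian case) is built. Recall that, for any abelian variety $A/K$, the pairing attached to the id\`ele class character $\ell$ is fundamentally a bilinear pairing
\[
\langle \cdot , \cdot \rangle_{\ell_K}^{\mathrm{bi}} \colon A(K) \times A^{\vee}(K) \longrightarrow \Q_p,
\]
constructed by evaluating an $\ell$-splitting of the Poincar\'e biextension $\mathcal{P}_A$ on $A \times A^{\vee}$ at a $K$-point. The height pairing on a \emph{principally polarized} abelian variety $(A,\lambda_A)$ used in Theorem~\ref{thm:PRpadicGZ} is then obtained by pulling back along $\mathrm{id} \times \lambda_A$, so that for $a_1, a_2 \in A(K)$,
\[
\langle a_1, a_2 \rangle_{\ell_K} \;=\; \langle a_1, \lambda_A(a_2) \rangle_{\ell_K}^{\mathrm{bi}},
\]
and analogously on $B$.

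The key fact I would invoke is the naturality of the Poincar\'e biextension: a homomorphism $g\colon A\to B$ induces a canonical isomorphism $(g \times \mathrm{id})^*\mathcal{P}_B \simeq (\mathrm{id} \times g^{\vee})^*\mathcal{P}_A$ of biextensions on $A \times B^{\vee}$, and this isomorphism is compatible with $\ell$-splittings. Evaluating on the $K$-point $(a, b')$ with $a\in A(K)$, $b'\in B^{\vee}(K)$ yields the adjointness identity
\[
\langle g(a), b' \rangle_{\ell_K}^{\mathrm{bi}} \;=\; \langle a, g^{\vee}(b') \rangle_{\ell_K}^{\mathrm{bi}}.
\]
This is the form in which the result is recorded in \cite[(3.4.3)]{MazurTateBiext}; for the Coleman--Gross avatar used in our setting, the same identity follows from the corresponding compatibility of the splittings (unit-root for the local part, global reciprocity for the global part).

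With this in hand, the proof is a one-line unraveling. Starting from the left-hand side, rewrite using the definition of the height pairing on $(B,\lambda_B)$, apply the adjointness identity with $b' = \lambda_B(b)$, and then insert $\lambda_A^{-1}\lambda_A$ to match the definition on $(A,\lambda_A)$:
\[
\langle g(a), b \rangle_{\ell_K}
= \langle g(a), \lambda_B(b) \rangle_{\ell_K}^{\mathrm{bi}}
= \langle a, g^{\vee}(\lambda_B(b)) \rangle_{\ell_K}^{\mathrm{bi}}
= \langle a, \lambda_A\bigl((\lambda_A^{-1}\circ g^{\vee}\circ \lambda_B)(b)\bigr)\rangle_{\ell_K}^{\mathrm{bi}}
= \langle a, (\lambda_A^{-1}\circ g^{\vee}\circ \lambda_B)(b) \rangle_{\ell_K}.
\]

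The only genuinely substantive step is the adjointness of the Poincar\'e biextension under $(g, g^{\vee})$; everything else is formal. The main obstacle, if one wanted to make the argument fully self-contained rather than citing \cite{MazurTateBiext}, would be to verify that the chosen $\ell$-splittings on $\mathcal{P}_A$ and $\mathcal{P}_B$ correspond under the canonical isomorphism $(g\times \mathrm{id})^*\mathcal{P}_B \simeq (\mathrm{id}\times g^{\vee})^*\mathcal{P}_A$. In our setup this is automatic because the local splitting is the unit-root subspace (which is preserved by any morphism of abelian varieties, since $g$ commutes with Frobenius) and the global splitting is induced by the id\`ele class character $\ell$ itself, which does not depend on $A$. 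Thus the compatibility is built into the construction, and no additional choices need to be checked.
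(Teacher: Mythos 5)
Your proof is correct, and it reconstructs exactly the argument behind the result the paper simply cites: the paper gives no proof of Proposition~\ref{prop:ProjectionFormula} at all, quoting it directly from Mazur--Tate (3.4.3), whose content is precisely the adjointness of the biextension pairing under $(g, g^{\vee})$ that you derive and then translate through the principal polarizations. Your added remark on the compatibility of splittings (functoriality of the canonical $\rho$-splittings, or equivalently of the unit-root subspace since $g^*$ commutes with Frobenius) is the right point to check and is handled correctly, so nothing is missing.
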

We deduce the following proposition from Mazur and Tate's formula.


\begin{proposition}
\label{cor:PRpadicGZ}
Let $\phi:X_0(N) \to X$ be a simple new $\Gamma_0(N)$-modular curve, with Jacobian $J_X$ and an associated newform $f_X$.
Let $\pi: J_0(N) \to J_X$ be induced by $\phi_*$.
We continue to suppose $p$ is ordinary for $J_X$ (see Assumption \ref{hyp:heights} \eqref{hyp:ordinaryp}). Then
\begin{align}
\calL'_{p, \ell}(f_X, 1) =\left( 1 - \frac{1}{\alpha_p(f_X)} \right)^{4} \frac{\langle \pi(y_{K,f_X}), \pi(y_{K,f_X})\rangle_{\ell_K}}{ \deg \phi}.
\end{align}
\end{proposition}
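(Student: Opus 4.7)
The plan is to apply Theorem \ref{thm:PRpadicGZ} to $f_X$ to obtain
\[ \calL'_{p, \ell}(f_X, 1) = \left( 1 - \frac{1}{\alpha_p(f_X)} \right)^{4} \langle y_{K,f_X}, y_{K,f_X} \rangle_{\ell_K}, \]
and then convert the height on $J_0(N)$ into a height on $J_X$ using the projection formula of Proposition \ref{prop:ProjectionFormula}. Specifically, taking $g = \pi$, $a = y_{K,f_X}$, and $b = \pi(y_{K,f_X})$, the projection formula gives
\[ \langle \pi(y_{K,f_X}), \pi(y_{K,f_X}) \rangle_{\ell_K} = \langle y_{K,f_X}, \pi^*\pi(y_{K,f_X}) \rangle_{\ell_K}, \]
where $\pi^* \colonequals \lambda_{J_0(N)}^{-1} \circ \pi^\vee \circ \lambda_{J_X}$ is the map induced by pullback along $\phi$ (viewing $J_0(N)$ and $J_X$ as self-dual via their canonical principal polarizations).

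The key step is to show that $\pi^*\pi(y_{K,f_X}) = (\deg \phi)\, y_{K,f_X}$ in $J_0(N)(K)\otimes E_{f_X}$ modulo torsion. First, since $\phi$ is a finite morphism of degree $\deg \phi$, the standard identity $\phi_* \circ \phi^* = [\deg \phi]$ on divisor classes gives $\pi \circ \pi^* = [\deg \phi]$ on $J_X$. Applying $\pi$ to both sides of the target identity yields
\[ \pi(\pi^*\pi(y_{K,f_X})) = (\deg \phi)\,\pi(y_{K,f_X}), \]
so the difference $\pi^*\pi(y_{K,f_X}) - (\deg \phi)\, y_{K,f_X}$ lies in $\ker(\pi)\otimes E_{f_X}$. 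Next, since $X$ is a simple new $\Gamma_0(N)$-modular curve, the Shimura abelian variety $A_{f_X}$ is the unique $f_X$-isotypical subvariety of $J_0(N)$, and $\pi$ restricts to an isogeny $A_{f_X}\to J_X$. By functoriality, $\pi^*$ takes values in $A_{f_X}$, so both sides of the identity lie in the $f_X$-isotypical component, on which $\pi$ is an isogeny. Hence their difference is torsion and the equality holds modulo torsion (which height pairings ignore).

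Combining, we obtain
\[ \langle \pi(y_{K,f_X}), \pi(y_{K,f_X}) \rangle_{\ell_K} = (\deg \phi) \langle y_{K,f_X}, y_{K,f_X} \rangle_{\ell_K}, \]
and substituting into Theorem \ref{thm:PRpadicGZ} yields the claimed formula. The main technical point is the verification that $\pi^* \circ \pi$ acts as multiplication by $\deg \phi$ on $y_{K, f_X}$, which rests on the isotypical decomposition of $J_0(N)$ and the simpleness hypothesis on $J_X$; otherwise the argument is a formal manipulation of the projection formula.
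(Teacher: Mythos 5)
Your proposal is correct and follows essentially the same route as the paper: apply Theorem \ref{thm:PRpadicGZ} to $f_X$, invoke Proposition \ref{prop:ProjectionFormula} with $g=\pi$, and reduce to the identity $\pi^*\pi(y_{K,f_X})=(\deg\phi)\,y_{K,f_X}$. The paper packages this last step by observing that $e_X=(\deg\phi)^{-1}\pi^*\pi$ is an idempotent projecting onto the $f_X$-isotypical component and fixes $y_{K,f_X}$, whereas you verify the same identity directly from $\pi\circ\pi^*=[\deg\phi]$ together with injectivity of $\pi$ (mod torsion) on the $f_X$-isotypical part; these are equivalent justifications.
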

\begin{proof}
Let $\pi^\vee: J_X^\vee \to J_0(N)^\vee$ be the dual map on the dual abelian varieties and $\theta_J: J_0(N)^\vee \to J_0(N)$ be the principal polarization on $J_0(N)$.
Consider the polarization $\theta_X=\pi \circ \theta_J \circ \pi^\vee : J_X^\vee \to J_X$.  Let $\lambda: J_X^\vee \to J_X$ be the principal polarization of $J_X$. 
By identifying $\phi_* = \pi$ and $\phi^* =\theta_J \circ  \pi^\vee \circ \lambda^{-1}$ we see that $(\deg \phi) \lambda = \theta_X$.
In other words $\phi_* \phi^* = \id_{J_X} \deg \phi$, so 
letting \[e_X \colonequals \left( \frac{1}{\deg\phi} \right) \theta_J \circ \pi^\vee \circ \lambda^{-1} \circ \pi\]
we see that $e_X$ is an idempotent in $\End^0(J_0(N))$ which gives projection onto the component $\End^0(J_X)$. 

Proposition \ref{prop:ProjectionFormula} gives
 \begin{align*}
  \langle y_{K,f_X}, (\theta_J \circ \pi^\vee \circ \lambda^{-1})(\pi(y_{K,f_X})) \rangle_{\ell_K}  = \langle  \pi( y_{K,f_X}), \pi (y_{K,f_X}) \rangle_{\ell_K}.
  \end{align*}
 But the left hand side is also equal to $(\deg \phi) \langle y_{K,f_X},e_X( y_{K,f_X}) \rangle_{\ell_K} = (\deg \phi) \langle y_{K,f_X},y_{K,f_X} \rangle_{\ell_K} $, since $y_{K,f_X} = e_X( y_{K,f_X})$ and $e_X$ is idempotent.

 So $\langle y_{K,f_X},y_{K,f_X} \rangle_{\ell_K}=  \langle \pi(y_{K,f_X}),\pi(y_{K,f_X}) \rangle_{\ell_K} /(\deg \phi)$, as claimed.
\end{proof}

The following corollary follows from the formula for taking the trace \cite[(1.10.5)]{MazurTateBiext}. Let  $\ell_\Q $ be the cyclotomic character of $\Gal(\overline{\Q}/\Q)$.
\begin{corollary}
\label{cor:trace}
Let $\phi:X_0(N) \to X$ be a simple new $\Gamma_0(N)$-modular curve, with Jacobian $J_X$, and an associated newform $f_X$.
Let $\pi: J_0(N) \to J_X$ be induced by $\phi_*$.
Make an identification $\End^0(J_0(N)) \simeq E_{f_X}$.
We have
\begin{align*}
 \langle \pi(y_{K}),\pi(y_{K}) \rangle_{\ell_\Q} =  \frac{1}{2} \sum_{\sigma \in \Gal(E_{f_X}/\Q)} \langle \pi(y_{K,f^\sigma_X}) ,\pi(y_{K,f^\sigma_X})  \rangle_{\ell_K},
\end{align*}
where on the left hand side we are considering $\pi(y_{K})$ as a point in $J_X(\Q)$.
\end{corollary}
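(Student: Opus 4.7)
The plan is to combine two ingredients: the Mazur--Tate trace formula \cite[(1.10.5)]{MazurTateBiext}, which compares the $\ell_\Q$-height of a rational point over $\Q$ with its $\ell_K$-height over $K$, and the orthogonal decomposition of the Heegner point into isotypical components, which was used in Section \ref{sec:Heegnerpoints} for $J_0(N)$ and which transfers to $J_X$ via $\pi$.

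First, I would invoke the trace formula. Since $\ell$ is the cyclotomic character, $\ell_K$ is obtained from $\ell_\Q$ by precomposition with the norm $\mathrm{Nm}_{K/\Q}$. The formula \cite[(1.10.5)]{MazurTateBiext} then yields, for any $P \in J_X(\Q)$ viewed as an element of $J_X(K)$,
\begin{equation*}
\langle P, P \rangle_{\ell_K} = [K:\Q]\,\langle P, P \rangle_{\ell_\Q} = 2\,\langle P, P \rangle_{\ell_\Q}.
\end{equation*}
Applying this to $P = \pi(y_K) \in J_X(\Q)$ gives
\begin{equation*}
\langle \pi(y_K), \pi(y_K) \rangle_{\ell_\Q} = \tfrac{1}{2}\,\langle \pi(y_K), \pi(y_K) \rangle_{\ell_K}.
\end{equation*}

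Next, I would decompose $\pi(y_K)$ isotypically. Under the identification $\End^0(J_X) \simeq E_{f_X}$, the element $\pi(y_K) \in J_X(K)\otimes E_{f_X}$ splits as $\pi(y_K) = \sum_{\sigma \in \Gal(E_{f_X}/\Q)} \pi(y_{K,f_X^\sigma})$, where the summands are the projections to the $\sigma$-isotypical subspaces. By the same argument used in Section \ref{sec:Heegnerpoints} (Rosati positivity forces the Hecke action to be self-adjoint for the height pairing), distinct eigenspaces are orthogonal under $\langle\cdot,\cdot\rangle_{\ell_K}$. Therefore
\begin{equation*}
\langle \pi(y_K), \pi(y_K) \rangle_{\ell_K} = \sum_{\sigma \in \Gal(E_{f_X}/\Q)} \langle \pi(y_{K,f_X^\sigma}), \pi(y_{K,f_X^\sigma}) \rangle_{\ell_K}.
\end{equation*}
Combining this with the previous display yields the stated identity.

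The only substantive point to verify is the first step: that the cyclotomic characters over $K$ and $\Q$ are compatible via the norm, so that Mazur--Tate's trace formula produces exactly the factor of $\tfrac{1}{2}$. Once that compatibility is settled, the orthogonality and decomposition are essentially formal consequences of the fact that the Hecke action is self-adjoint for $\langle\cdot,\cdot\rangle_{\ell_K}$, and the proof assembles immediately.
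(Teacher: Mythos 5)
Your proof is correct and takes essentially the same route as the paper: applying the Mazur--Tate trace formula \cite[(1.10.5)]{MazurTateBiext} to obtain the factor $\tfrac{1}{2}$, and then decomposing $\pi(y_K)$ into the $f_X^\sigma$-isotypical pieces and invoking their orthogonality under the height pairing. The paper performs the two steps in the opposite order (first decomposing $y_K$, noting that all other newform eigenspaces die under $\pi$, then applying the trace formula), and states the trace step in terms of $\mathrm{Tr}_{K/\Q}\pi(y_K) = 2\pi(y_K)$ rather than norm-compatibility of $\ell_K$ and $\ell_\Q$, but these are only cosmetic differences.
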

\begin{proof}
 We have that $y_K=  \sum_{g \in \New_N} \sum_{\sigma \in \Gal(E_g/\Q)}  y_{K, g^\sigma}$.  For $g \neq f_X$  the $g$-isotypical subspace of $J_0(N)$ is in the kernel of $\pi$, so
 \[ \langle \pi(y_K), \pi(y_K) \rangle_{\ell_K} = \sum_{\sigma \in \Gal(E_{f_X}/\Q)} \langle   \pi(y_{K, f^\sigma_X}),  \pi(y_{K, f^\sigma_X})\rangle_{\ell_K}.  \]
 Then by \cite[(1.10.5)]{MazurTateBiext}
 \[ [K:\Q] \langle \pi(y_K), \pi(y_K) \rangle_{\ell_K} = \langle 2 \pi(y_K),2 \pi(y_K) \rangle_{\ell_\Q}\] where $2 \pi(y_K)$ is the trace of $\pi(y_K)$ from $K/\Q$. 
\end{proof}

Perrin-Riou provides a comparison of the $p$-adic $L$-function described here to the $p$-adic $L$-function $L_{p, \MTT}(f)$ of Amice--V\'elu and Vishik \cite[(1.1)]{PerrinRiouInvent} discussed in the paper of Mazur, Tate, and Teitelbaum \cite{MazurTateTeitelbaum}. This comparison will be important computationally, since the latter $L$-function can be computed in Sage. We first give a brief description of the interpolation property of the $p$-adic $L$-function $L_{p, \MTT}(f)$; more details can be found in \cite{MazurTateTeitelbaum}.
Let $\Q_\infty$ be the cyclotomic $\Z_p$-extension of $\Q$ and $\Gamma_\Q \colonequals \Gal(\Q_\infty/\Q)$.
Let $\gamma$ be a topological generator for $\Gamma_{\Q}$.
Let $\zeta$ be a primitive $p^r$th root of unity. We write $\psi_\zeta$ for the associated character of $\Gamma_{\Q}$ sending $\gamma \mapsto \zeta$. We can also think of this as a Dirichlet character by considering it as a character of $\Gal(\Q(\zeta_{p^r})/\Q) \simeq (\Z/p^{r}\Z)^\times$. Write $\tau(\psi_\zeta)$ for the Gauss sum.
Then there exists an element
\begin{align}
\label{eqn:interpolationMTT}
\psi_\zeta(L_{p, \MTT}(f)) = e_p(\zeta) \frac{L(f, \psi_\zeta^{-1}, 1)}{\Omega_f^{\sgn(\psi_\zeta)}}
\end{align}
where $\Omega_f^{\pm}$ are certain periods associated to $f$, which we will elaborate on later (see \eqref{eqn:interpolationMTT} and Algorithm~\ref{alg:plusperiod}), and
\begin{align}
e_p(\zeta) = \left\{ \begin{array}{cc} \alpha_p^{- r - 1} \frac{p^{r+1}}{\tau(\psi_\zeta^{-1})} & \text{ if } \zeta \neq 1 \\ \alpha_p(f)^{-1}\left(1- \frac{1}{\alpha_p(f)}  \right)^2& \text{ if } \zeta = 1. \end{array} \right.
\end{align}

 Let $\Omega_f \colonequals 8 \pi^2 \|f \|$ be the period of the modular form $f$ \cite[p.458]{PerrinRiouInvent}. Let $\varepsilon$ denote the quadratic character associated to $K$ with conductor $|D|$. Then
\begin{align}
\label{PRcomp}
\ell_K(\calL_p(f, \mathbf{1})) &= \ell_\Q(L_{p, \MTT}(f))\ell_\Q(L_{p,\MTT}(f^{\varepsilon})) \left( \frac{ \sqrt{|D|}}{ \Omega_f} \right).
\end{align}

Sage has an implementation of the $p$-adic $L$-function of Amice--V\'elu and Vishik, so in practice, we use \eqref{PRcomp} and the fact that $L(f, 1) = 0$ to translate Theorem \ref{thm:PRpadicGZ} from a statement about the derivative of $\calL_p(f, 1)$ in the direction of $\ell$ into a statement about this $p$-adic $L$-function to compute the cyclotomic  $p$-adic height  of $y_{K,f}$:
\begin{align}
\label{eqn:derivPRMTT}
\calL'_{p, \ell}(f, 1) &= L'_{p, \MTT}(f,1) L_{p,\MTT}(f^{\varepsilon},1) \left( \frac{ \Omega^+_f   \Omega^+_{f\varepsilon}\sqrt{|D|}}{ \Omega_f} \right).
\end{align}

To compute $L_{p,\MTT}(f^{\varepsilon},1)$ we use the interpolation property of the $p$-adic $L$-function \eqref{eqn:interpolationMTT}:
 \begin{align}
 \label{eqn:hstexpanded}
L_{p,\MTT}(f^\varepsilon,1) =   (1 - 1/\alpha_p(f^\varepsilon))^{2} L(f^\varepsilon, 1)/\Omega_{f^\varepsilon}^+.
\end{align}
Since we chose $K$ to be a field where $p$ splits, $a_p(f^\varepsilon) = \varepsilon(p) a_p(f) = a_p(f)$ and therefore $\alpha_p(f^\varepsilon) = \alpha_p(f)$. We use the equality $(1 - 1/\alpha_p(f^\varepsilon))^{2} = (1 - 1/\alpha_p(f))^{2}$ to cancel some factors.

When we combine \eqref{eqn:hstexpanded} with \eqref{eqn:derivPRMTT}, we have
\begin{align}
\label{eqn:heightseq}
&\langle y_{K,f} ,y_{K,f} \rangle_{\ell_K} = \\  
 &\left( \frac{ \Omega^+_f   \Omega^+_{f\varepsilon} \sqrt{|D|}}{ \Omega_f} \right)\left( 1 - \frac{1}{\alpha_p(f)} \right)^{-2}
 \frac{L(f^\varepsilon, 1)}{\Omega_{f\varepsilon}^+}
\frac{d}{dT}  L_{p, \MTT}(f,T) \bigg|_{T = 0}\log_p(1+p). \notag
\end{align}
The conversion from $L_{p, \MTT}(f, s)$ to the series expansion $L_{p, \MTT}(f,T)$ requires a choice of topological generator $1+p$ for the Galois group of the cyclotomic $\Z_p$-extension $\Gal(K_\infty^{\cyc} / K)$.

Let $\sigma \in \Gal(E_f/\Q)$. By substituting $f^\sigma$ into the right hand side of \eqref{eqn:heightseq}, we obtain $\langle y_{K,f^\sigma}  ,y_{K,f^\sigma} \rangle_{\ell_K}$.

\begin{myalgorithm}[The cyclotomic $p$-adic height over $K$ of the $f$-isotypical component of the Heegner point $y_{K,f}$]
\label{alg:computeheightPR}
\hfill

\noindent Input: 
\begin{itemize}[itemsep=0pt]
 \item $f \in S_2(N)$ newform with coefficients in $E_f$; 
\item $K$ imaginary quadratic field of class number $1$ and discriminant $D<-3$ satisfying the Heegner hypothesis for $N$;
\item $p$ a prime split in $K$; and
\item an embedding $e: E_f \to \Q_p$ such that $f$ is ordinary in $e$.
\end{itemize}
Output: The cyclotomic $p$-adic height $\langle y_{K, f}, y_{K,f} \rangle_{\ell_K}$ over $K$ of $y_{K,f} \in J_0(N)(K)\otimes E_f$.
\begin{enumerate}[itemsep=0pt]
\item Compute $\frac{d}{dT} L_{p, \MTT}(f,T)\big|_{T = 0}$ using overconvergent modular symbols \cite{PSOverconvergent}.
\item Compute $ L(f^\varepsilon, 1)$ using Dokchitser's algorithms \cite{DokchitserLvals}.
\item Compute $\Omega^+_f$ using Algorithm \ref{alg:plusperiod} (normalized to agree with the normalization on the overconvergent modular symbols).
\item Compute $\|f\|$, for example, using \cite{DanCollinsPetersson}.
\item Return $\frac{\Omega_f^+ \sqrt{|D|} }{ 8 \pi^2 \|f\|}\cdot \left( 1 - \frac{1}{\alpha_p(f)} \right)^{-2} \cdot L(f^\varepsilon, 1) \cdot  \frac{d}{dT} L_{p, \MTT}(f,T) \big|_{T = 0} \log(1+p)$.
\end{enumerate}
\end{myalgorithm}

\begin{remark}
The convention of the sign of the height in Perrin-Riou differs from the convention chosen in Mazur--Tate--Teitelbaum and Pollack--Stevens. To achieve the correct normalization for $p$-adic BSD we negate the sign of the height returned by Algorithm~\ref{alg:computeheightPR}.
\end{remark}

To compute the quantity $\Omega_f^+$ we exploit the relationship \cite[I \S 8 (8.6)]{MazurTateTeitelbaum} between $f$ and quadratic twists of $f$ by fundamental discriminants $D'>0$. 
Let $\tau(\chi)$ denote the Gauss sum 
\begin{align}
\tau(\chi) \colonequals \sum_{a \mod D'} \chi(a) e^{2 \pi i a/D'}.
\end{align}
Since $D'$ is a fundamental discriminant,  $\tau(\chi) = \sqrt{D'}$.

Before we state the formula we need, we establish some background on modular symbols, following \cite{PSOverconvergent,PollackAWS}. Write $\Delta_0 \colonequals \Div^0(\P^1(\Q))$. We can act on $\Delta_0$ by elements of $\Gamma_0(N)$ via fractional linear transformation. 
Via this action, the set of additive homomorphisms $\Hom(\Delta_0, \C)$ has an action  \[ \varphi|\gamma \colonequals \varphi(\gamma E)\] where $\varphi \in \Hom(\Delta_0, \C)$, $E \in \Delta_0$, and $\gamma \in \Gamma_0(N)$. The \defi{$\C$-valued modular symbols} are those symbols that are invariant under the action of all $\gamma \in \Gamma_0(N)$, i.e. $\varphi|\gamma = \varphi$. We denote the space of these symbols as $\Symb_{\Gamma}(\C)$.

For any weight $2$ newform $g$ there exists a $\C$-valued modular symbol $\psi_g \in \Symb_{\Gamma}(\C)$ given by 
\[ \{ s\} - \{r\}  \mapsto  2 \pi i \int_r^s g(z) dz. \] In this context, $\{ s\} - \{r\}$ denotes the divisor with support $+1$ on $s \in \Q$ and $-1$ on $r \in \Q$.  This symbol encodes information about the twisted $L$-values of $g$. 

There is a $2$-dimensional subspace of $\Symb_{\Gamma}(\C)$ where the action of Hecke is equal to the eigenvalues of $g$.
The set $\Hom(\Delta_0, \C)$ has an involution $\iota =( \begin{smallmatrix}  -1 & 0 \\ 0 & 1  \end{smallmatrix})$ and so  $\psi_g$ can be decomposed as a sum of modular symbols $\psi_g = \psi_g^+ + \psi_g^-$. 
There exist complex numbers $\Omega_g^+$ and $\Omega_g^-$ such that $\varphi_g^+ \colonequals \psi_g^+/\Omega_g^+$ and $\varphi_g^- \colonequals \psi_g^-/\Omega_g^-$ take values in $E_g$ (see \cite[Theorem~2.2]{pAdicBSD}). 

The period $\Omega_g^+$ is only well-defined up to an element of $\overline{\Q}$ that is a unit in $\Q_p$. 
By \cite[I \S8 (8.6)]{MazurTateTeitelbaum} we can write the following relationship between modular forms and modular symbols
\begin{align}
\label{MTT:mainequality}
\frac{L(f^\chi, 1)}{\Omega_f^+} = \frac{\tau(\chi)}{D'} \sum_{\substack{a=1\\ \gcd(a,D')=1 }}^{ \lfloor D'/2 \rfloor} \chi(a)  
(  \varphi_f^+(\{a\} - \{D'\} ) + \varphi_f^+(\{ -a\} - \{ D'\})).
\end{align}
If $f^\chi$ is rank $0$, the sum will be non-zero. 

In practice, when evaluating the symbol $\psi_g^{\pm }$ and therefore the $p$-adic $L$-values of $g$ in  a computer algebra program, a choice of $\Omega_f^+$ must be fixed. For example, Sage makes a random choice of generator in the Hecke-eigenspace of $\Hom(\Delta_0, \C)^{\pm}$ corresponding to $g$ \cite[Section~3.5.3]{Stein}.
To extract this choice, we can evaluate the modular symbols $\varphi_f^+$ in \eqref{MTT:mainequality}.
This leads to the following algorithm for determining the period $\Omega_f^+$ that is compatible with the normalization on $\varphi_f^+$.

\begin{myalgorithm}[The plus period, normalized to agree with the overconvergent modular symbols]
\label{alg:plusperiod}
\hfill

\noindent Input: 
\begin{itemize}[itemsep=0pt]
\item $p$ a prime
\item $f \in S_2(N)$ newform with coefficients in $E_f$ and an embedding $e: E_f \to \Q_p$ such that $f$ is ordinary in $e$
\end{itemize}
Output: The period $\Omega_f^+$ (normalized to agree with the overconvergent modular symbols)

\begin{enumerate}[itemsep=0pt]
\item Set $D' \colonequals 5$.
\item \label{step:evalmodsym} Compute the right hand side of \eqref{MTT:mainequality} by evaluating the modular symbols and set $R$ equal to this value.
\item If $R$ is equal to $0$, set $D'$ to the next largest fundamental discriminant, and go back to Step \eqref{step:evalmodsym}.
\item Compute $L(f^\chi, 1)$ using Dokchitser's algorithms \cite{DokchitserLvals}.
\item  Return $R/L(f^\chi, 1)$.
\end{enumerate}
\end{myalgorithm}

\begin{remark}
When $f$ is the modular form associated to an elliptic curve $E$, we can take $\Omega_f^+$ to be the real period $\Omega_E^+$ of the elliptic curve.
\end{remark}

\begin{example}
\label{ex:height61}
Let $f_E$ be the modular form associated to the elliptic curve with LMFDB label \href{http://www.lmfdb.org/EllipticCurve/Q/61/a/1}{\texttt{61.a1}} and $p = 5$ a prime of good ordinary reduction. Let $\pi: X_0(61) \to E$ denote the modular parametrization. Choose $D = -19$ a Heegner discriminant for $E$. Then the $p$-adic $L$-series expansion for $E/\Q$ can be computed in Sage using \cite{PSOverconvergent}
\begin{align*}
 &L_{p, \MTT}(f_E,T) =
 O(5^{10}) + (1 + 2\cdot 5^2 + 5^3 + 5^4 + 3\cdot 5^5 + 2\cdot 5^7 + O(5^8))\cdot T  \\
 &+ (1 + 4\cdot 5 + 3\cdot 5^2 + 2\cdot 5^3 + 2\cdot 5^4 + 5^5 + O(5^6))\cdot T^2 + O(T^3).\notag
\end{align*}
Using the interpolation property, we have $\ell_\Q(L_{p,\MTT}(f^{\varepsilon})) =\left( 1 - \frac{1}{\alpha_p} \right)^2 L(f^\varepsilon_E, 1)/\Omega^+_{f^\varepsilon_E} $ and we can evaluate
\begin{align*}
L(f^\varepsilon_E, 1)/\Omega^+_{f^\varepsilon_E} =  2.
\end{align*}  
Finally, by \cite[Proposition~1]{CremonaModularDegree}, we have $\Omega_{f_E} =  2 \deg \pi \cdot \Vol E$, so
\begin{align*}
 \left( \frac{ \Omega^+_{f_E}   \Omega^+_{f^\varepsilon_E} \sqrt{|D|} \deg \pi}{ \Omega_f} \right) = \left( \frac{ \Omega^+_{f_E}   \Omega^+_{f^\varepsilon_E} \sqrt{|D|} }{ 2 \Vol E} \right) = 1.
 \end{align*}
Altogether, we evaluate the following formula for the $p$-adic height of $\pi(y_{K, f_E})$:
\begin{align*}
& \frac{1}{2} \langle \pi_E(y_{K, f_E}) , \pi_E(y_{K, f_E}) \rangle_{\ell_K} = \langle \pi_E(y_{K, f_E} ),\pi_E( y_{K, f_E}) \rangle_{\ell_\Q}  \notag \\
 &=\left( \frac{ \Omega^+_{f_E}   \Omega^+_{f^\varepsilon_E} \sqrt{|D|}}{ 2 \Vol E} \right)\left( 1 - \frac{1}{\alpha_p} \right)^{-2} \frac{
L(f^\varepsilon_E, 1)}{\Omega^+_{f^\varepsilon_E}}
\frac{d}{dT} L_{p, \MTT}(f_E,T)\bigg|_{T = 0}\log_p(1+p)\notag \\
&  = 4\cdot 5 + 4\cdot5^2 + 2\cdot 5^3 + 5^4 + 4\cdot 5^5 + 5^6 + 2\cdot 5^7 + 4\cdot 5^8 + O(5^9).\notag 
\end{align*}
Since $f_E$ has analytic rank $1$, by Gross--Zagier--Kolyvagin, the rank of $E(\Q)$ is one and the trace of the $f_E$-isotypical component of the Heegner point found here generates $E(\Q)$ up to finite index.
\end{example}

\begin{example}
\label{ex:height73}
Let $p = 11$ and $D =-19$.
Let $f$ and $f^\sigma$ be the modular forms in the newform orbit \href{https://www.lmfdb.org/ModularForm/GL2/Q/holomorphic/73/2/a/b/}{\texttt{73.2.a.b}}
given by 
\begin{align*} 
f &= q + (-\nu - 1)q^2 + (\nu - 2)q^3 + 3\nu q^4 + (-\nu - 1)q^5 + q^6 - 3q^7 + O(q^{8})\\
f^\sigma &= q + (\nu - 2)q^2 + (-\nu - 1)q^3 + (-3\nu + 3)q^4 + (\nu - 2)q^5 + q^6 - 3q^7 +  O(q^{8}).
\end{align*}
This has coefficient field $E_f = \Q(\nu)$ where $\nu$ has minimal polynomial $\nu^2 - \nu - 1$.
Then $f$ and $f^\sigma$ are associated newforms for the simple new $\Gamma_0(N)$-modular curve $X =X_0(73)^+$.
Fix the embedding $e: E_f\to \Q_p $
\begin{align}
\label{emb:pemb1}
\nu &\mapsto 8 + 7\cdot11 + 10\cdot11^2 + 7\cdot11^3 + O(11^4). 
\end{align}

By computing the derivatives of the $p$-adic $L$-functions in Sage, we get the values
\begin{align}
&\frac{d}{dT} L_{p, \MTT}(f,T) \bigg|_{T = 0} =   
7 + 7\cdot 11 + 2 \cdot 11^2 + 9 \cdot 11^3 + 4\cdot 11^4 +  O(11^5) \label{eqn:derivf}\\
&\frac{d}{dT} L_{p, \MTT}(f^{\sigma},T) \bigg|_{T = 0} =2 + 5 \cdot 11 + 6 \cdot 11^2 + 3 \cdot 11^3 + 8\cdot 11^4 + O(11^5)  \label{eqn:derivfsig}
.\end{align}
We have that $a_p(f) = \nu - 2$ and $a_p(f^\sigma) = -\nu - 1$, and we embed via $e$. It remains to compute the twisted $L$-value and the periods. We fix a complex embedding $e_c: E_f \to \C$ given by
\begin{align}
\label{emb:complex73}
\nu &\mapsto 1.618.
\end{align}
We can compute the Petersson norm of $f$ and $f^\sigma$ under \eqref{emb:complex73}. We get
\begin{align*}
 \| f \| = 0.986763 \text{ and } \| f^\sigma \| = 0.368434.
 \end{align*}
Changing the complex embedding would swap the values of the norms.

Using Dokchitser's package for computing values of $L$-functions we now compute $L(f^\varepsilon, 1)$ where $\varepsilon$ is the quadratic character twisting by $D = -19$. This yields $L(f^\epsilon, 1) = 
4.771908$
under the embedding $e_c$. 

Finally, we compute $\Omega_f^+$.
For $D' = 5$ we find that the right hand side of \eqref{MTT:mainequality} is $-4/\sqrt{5}$.
We compute $L(f^\chi, 1)  = 6.34683.$
Therefore $\Omega_f^+ = 3.5479.$
Combining the complex terms, we find
\begin{align*}
&\frac{ \Omega^+_f   L(f^\varepsilon, 1) \sqrt{|D|}}{ \Omega_f} = 0.94721.
\end{align*}
Numerically, by computing the quantities to higher precision, we recognize this as being close to an algebraic number having minimal polynomial $20x^2 - 20x + 1$, and so belongs to $E_f$. It appears to be $e_c(r_1)$ where $r_1 \colonequals 2/5 \nu + 3/10$.
Repeating the calculations for $f^\sigma$, we obtain $\Omega^+_{f^\sigma}   L(f^{\sigma \varepsilon}, 1) \sqrt{|D|}/ \Omega_f^{\sigma} =  r_2 \colonequals -2/5 \nu + 7/10$, the other root of this minimal polynomial.

The values $e(r_1)$, \eqref{eqn:derivf}, and $e(\alpha_p(f))$ can be combined using \eqref{eqn:heightseq} to obtain the height of $y_{K,f} \in J_0(N)(K)$. To project $\pi: J_0(73) \to J_0(73)^+$, we multiply by the degree of the quotient map $X_0(73) \to X_0(73)^+$  which is $2$. We have
\begin{align}
\label{ht1}
&\langle \pi ( y_{K, f}),\pi( y_{K,f} )\rangle_{\ell_K} = 6\cdot 11 + 5\cdot 11^2 + 3\cdot 11^3 + 8\cdot 11^5 + 7\cdot 11^6 + O(11^7).
\end{align}

Similarly, we can compute $\langle\pi( y_{K, f^\sigma}) ,\pi( y_{K,f^\sigma}) \rangle_{\ell_K} $ by using the quantities $e(r_2)$,  \eqref{eqn:derivfsig}, and $\alpha_p(f^\sigma)$ in the formula \eqref{eqn:heightseq}. We get
\begin{align}
\label{ht2}
&\langle\pi ( y_{K, f^\sigma}) ,\pi( y_{K,f^\sigma} )\rangle_{\ell_K} =
7\cdot 11 + 8\cdot 11^2 + 10\cdot 11^3 + 11^4 + 2\cdot 11^5 + 3\cdot 11^6 +O(11^7).
\end{align}
 Using Corollary \ref{cor:trace} we also obtain $\langle \pi(y_{K,f}),\pi(y_{K,f}) \rangle_{\ell_\Q}$.
\end{example}

\section{Quadratic Chabauty}
\label{ch:quadchab}
Let $X$ be a smooth projective geometrically integral curve defined over $\Q$ of genus $g>1$. Quadratic Chabauty \cite{QCI,QCII} is a technique for studying rational points on $X$ that computes a finite set of $p$-adic points containing $X(\Q)$ in some cases when the rank of $J$ is greater than or equal to the genus of $X$.

Quadratic Chabauty uses local and global $p$-adic height functions to construct a quadratic Chabauty function $\rho(z)$ that is used to cut out the finite set of $p$-adic points of $X$ containing $X(\Q)$. We will denote the global $p$-adic cyclotomic height on $y \in J(\Q)$ by $ h(y) \colonequals \langle y, y \rangle_{\ell_\Q}$ when the field of definition and choice of id\`ele class character is clear. Otherwise we will use the notation in Section \ref{Ch:Height}. 
The global height $h$ is a sum of local heights $h = \sum_\ell h_\ell$ where $\ell$ ranges over finite primes.
For $\ell \neq p$, the local height $h_\ell$ is a biadditive, continuous, and symmetric function on pairs of disjoint $\Q$-rational divisors of degree zero on $X$. For $\ell = p$, Coleman and Gross describe $h_p$ as a Coleman integral of a third kind differential form.
For more background on $p$-adic heights, which we do not describe in detail here, see \cite{ColemanGross,BBCrelle,QCIntegral}.

In Section~\ref{sec:qcintegral} we discuss the case of rank $1$ elliptic curve $E$ and construct a locally analytic quadratic Chabauty function $\rho(z)$ whose solutions contain the integer points of $E$.  In Section~\ref{sec:qcrational} we discuss the case of rational points on higher genus curves.  We give our main theorem, Theorem \ref{thm:mainthm}, explicitly constructing the quadratic Chabauty function $\rho(z)$ as a locally analytic function without knowing any infinite order points in the case of simple new $\Gamma_0(N)$-modular curves. 
We also provide several examples of how to apply Theorem \ref{thm:mainthm} in practice.

\subsection{Integral points on rank one elliptic curves}
\label{sec:qcintegral}
In this section, we study the case of determining integral points on a rank $1$ genus $1$ (elliptic) curve $E/\Q$. A consequence of Faltings's theorem is that the affine curve obtained by removing a point $\calX \colonequals E - \calO$ has finitely many integral points $\calX(\Z)$. Quadratic Chabauty for integral points on rank $1$ elliptic curves requires an infinite order point in $E(\Q)$. We replace this requirement with the computation of special values of two $p$-adic $L$-functions constructed by Perrin-Riou (see Section~\ref{Ch:Height}) and Bertolini, Darmon, and Prasanna (see Section~\ref{Ch:Log}) that determine the height and logarithm of a Heegner point for $E$, respectively. This allows us to determine $\calX(\Z)$ without knowing a rational point of infinite order.
Since we rely on the results from the previous sections, we make the assumptions \ref{hyp:logs} and \ref{hyp:heights}.

Let $E$ be a rank one elliptic curve over $\Q$ with conductor $N$ given by a Weierstrass equation
\[y^2 + a_1xy + a_3y = x^3 + a_2 x^2 + a_4 x + a_6.\]
Let $p> 2$ be a prime of good ordinary reduction. Let $\calE/\Z$ denote the minimal regular model of $E$ and $\calX = \calE - \calO$ the complement of the zero section in $\calE$.
Fix also differentials $\omega_0 = \frac{dx}{2y + a_1x +a_3}$ and $\omega_1 = x \omega_0$.

Let $b$ be a tangential basepoint at the point at infinity or an integral $2$-torsion point (see \cite[Section~1.5.4]{BesserHeidelberg} for more on tangential basepoints). Consider the two functions, the double Coleman integral 
\begin{equation}
D_2(z) \colonequals \int_b^z \omega_0 \omega_1
\end{equation}
as well as the logarithm, which can be expressed as the Coleman integral 
\begin{equation}
 \log(z)  \colonequals  \int_b^{z} \omega_0.
 \end{equation}

In \cite{BBCrelle,QCIntegral,EllipticQuadChabauty} Balakrishnan, Besser, and M\"uller  give algorithms to compute a finite set of $p$-adic points containing the integral points. We recall a related theorem.

Suppose that $p>3$ and let $E_2$ be the Katz $p$-adic weight $2$ Eisenstein series \cite{MazurSteinTate}. Define the constant 
\begin{equation}
\label{Katzconst}
c \colonequals \frac{a_1^2 + 4 a_2 - E_2(E, \omega_0)}{12}.
\end{equation} 
For any non-torsion point $P \in E(\Q)$ define $\gamma$ by
\begin{equation}
\gamma \colonequals \frac{h(P)}{\log(P)^2}.
\end{equation}

\begin{remark}
The quantity $\gamma$ does not depend on $P$. The $\Z_p$-module $\Z_p \otimes E(\Q)$ is $1$-dimensional, and has only a $1$-dimensional space of quadratic forms, and $\log(P) \neq 0$ when $P$ is non-torsion.
\end{remark}

\begin{theorem}[{\cite[Theorem~1.7]{BianchiBielliptic}}]
\label{Bianchithm}
Let $E$ be a rank $1$ elliptic curve over $\Q$ with good ordinary reduction at $p$ and bad reduction at the primes in a finite set $S$. There is a computable finite set $W \subset \Q_p$, $W = \prod_{q \in S} W_q$ such that $W_q$ is the possible local height contributions for an integral point at bad places, and $W_q$ is determined by the Kodaira type of the reduction of $E$ at $q$. For $w \in W$ define $\|w\|$ to be the sum of its elements.

If $E$ has good reduction at $q = 2$ or $q= 3$, and $\overline{E}(\F_q) = \{ \calO\}$, or if $E$ has split multiplicative reduction of Kodaira type $I_1$ at $2$, then $ \calX(\Z) = \emptyset .$

Otherwise, 
\begin{equation*}  
\calX(\Z) \subseteq  \bigcup_{w \in W} \psi(w),
\end{equation*}
where
\begin{equation}
\label{BianchiQCell}
\psi(w) \colonequals \{z \in \calX(\Z_p) : 2 D_2(z) + c \log(z)^2 + \| w \| = \gamma \log(z)^2\}. 
\end{equation}
\end{theorem}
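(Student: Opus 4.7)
My plan is to decompose the global $p$-adic cyclotomic height $h(P) = \sum_\ell h_\ell(P)$ on $E(\Q)$ and exploit three facts: that for $P \in \calX(\Z)$ the local term $h_\ell(P)$ vanishes at every good prime $\ell \ne p$; that at a bad prime $q \in S$ the local height $h_q(P)$ takes only finitely many values, determined by the Kodaira type, forming the set $W_q$; and that the rank-one hypothesis forces the global $h$ to be proportional to $\log^2$ on $E(\Q)\otimes \Q_p$.

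Before the main argument I would dispose of the two degenerate cases. If $E$ has good reduction at $q \in \{2,3\}$ with $\overline{E}(\F_q) = \{\calO\}$, then any $P \in E(\Z)$ reduces to $\calO$ modulo $q$ and so cannot lie in $(\calE - \calO)(\Z) = \calX(\Z)$, giving $\calX(\Z) = \emptyset$. The split multiplicative $I_1$ case at $2$ follows from a short direct analysis of the component group on the N\'eron model, combined with a case-by-case check that the available local height values at $2$ are incompatible with integrality on $\calX$.

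For the main case I would first collect the local-height data. At a good prime $\ell \ne p$ outside $S$, an integral point of $\calX$ reduces to a smooth point of $\calE_{\F_\ell}$ distinct from the zero section, so $h_\ell(P) = 0$. At a bad prime $q \in S$, the value $h_q(P)$ is determined by the component of $\calE_{\F_q}$ hit by $P$, and the classification by Kodaira type produces the explicit finite list $W_q$, whence $W = \prod_{q \in S} W_q$. At $p$, I would use the Coleman--Gross description of the local height together with the unit-root splitting of $H^1_{\mathrm{dR}}(E_{\Q_p})$ and a change of basis between $\omega_1$ and a Frobenius eigen-differential: tracking the correction terms and the effect of the tangential basepoint $b$ on the iterated integral produces the explicit formula $h_p(P) = 2 D_2(P) + c \log(P)^2$, with $c$ as in \eqref{Katzconst}.

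To conclude I would invoke the rank-one hypothesis. Because $E(\Q)\otimes \Q_p$ is one-dimensional and $\log$ is nonzero on non-torsion points, the quadratic form $h$ equals $\gamma \log^2$ for the constant $\gamma$ of the statement, as noted in the remark following its definition. Then for $P \in \calX(\Z)$, writing $w = (h_q(P))_{q \in S} \in W$ and $\|w\| = \sum_{q \in S} h_q(P)$, equating the global height to the sum of local contributions gives $\gamma \log(P)^2 = 2 D_2(P) + c \log(P)^2 + \|w\|$, placing $P \in \psi(w)$. The main technical obstacle I anticipate is the explicit formula for $h_p$: expressing the Coleman--Gross height in terms of the iterated integral of $\omega_0$ and $\omega_1$ and pinning down the constant $c$ requires careful bookkeeping of the Frobenius-equivariant splitting, the tangential basepoint, and the passage between $\omega_1$ and a Frobenius eigen-differential.
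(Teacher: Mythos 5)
Your outline is essentially the proof of the cited result: the paper itself gives no proof of this theorem (it is quoted from Bianchi, Theorem~1.7, building on Balakrishnan--Besser--M\"uller), and the argument there is exactly your decomposition $h=\sum_\ell h_\ell$ with $h_\ell(P)=0$ at good $\ell\neq p$ for integral points, the finite Kodaira-type sets $W_q$ at bad primes, the identity $h_p(P)=2D_2(P)+c\log(P)^2$ with $c$ the Katz $E_2$ constant \eqref{Katzconst} coming from the $p$-adic sigma function/unit-root splitting, and the rank-one proportionality $h=\gamma\log^2$. The one quibble is the split multiplicative $I_1$-at-$2$ case: in the source this is a pure reduction argument (the component group is trivial, so every $\Q_2$-point reduces into the smooth locus of the special fiber, whose only $\F_2$-point is $\calO$ since $\overline{E}_{\mathrm{ns}}(\F_2)\cong\F_2^\times$ is trivial), not a comparison of local height values --- though your ``component group on the N\'eron model'' remark is the right ingredient.
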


We describe $\gamma$ in terms of two different special values of $p$-adic $L$-functions associated to $f \in S_2(N)$ the cusp form related to $E$ by modularity. 
This allows us to obtain new input into quadratic Chabauty as described in Theorem \ref{Bianchithm}, by replacing the $\gamma$ in \eqref{BianchiQCell} with one determined by special values of $L$-functions.
\begin{theorem}
\label{thm:mainthmelliptic}
Let $f$ be the modular form associated to $E$ and $\pi: X_0(N) \to E$ the modular parametrization. Assume \ref{hyp:heights} and \ref{hyp:logs}.
We have the equality
\begin{align} \label{eqn:gamma}
\gamma =   \frac{ \frac{1}{2} (\deg \pi) \left( 1 - \frac{1}{\alpha_p} \right)^{-4} \calL'_{p, \ell}(f, 1)}{ \left( \frac{1 - a_p(f) +p}{p}\right)^{-2} L_p(f,1)}
\end{align}
whenever  $L(f^\varepsilon, 1) \neq 0$.
Furthermore, $\gamma$ is computable. 

In other words, 
$\rho(z) = h_p(z) - \gamma\log(z)^2$ is a computable locally analytic function from $\calX(\Z_p)$ to $\Q_p$ that takes values on a finite computable set when evaluated on $\calX(\Z)$.
\end{theorem}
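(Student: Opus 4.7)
The plan is to compute $\gamma$ by evaluating $h(P)/\log(P)^2$ at the single point $P = \pi(y_K) \in E(\Q)$. The Heegner point $\pi(y_K)$ lies in $E(\Q)$ as discussed in Section \ref{sec:Heegnerpoints}. To show it is non-torsion, I would invoke the classical Gross--Zagier formula $L'(f,1) L(f^\varepsilon, 1) = h_{\text{NT}}(y_{K,f})$: since $L'(f,1) \neq 0$ (as $f$ has analytic rank $1$) and $L(f^\varepsilon, 1) \neq 0$ by hypothesis, the N\'eron--Tate height of $y_{K,f}$ is positive, which forces $\pi(y_{K,f}) = \pi(y_K)$ to be non-torsion. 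Since $E(\Q) \otimes \Z_p$ is one-dimensional and both $h$ and $\log^2$ are $\Q_p$-valued quadratic forms on it, they differ by a scalar, and that scalar is $\gamma$ regardless of which non-torsion point is chosen.

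The numerator and denominator would then be computed using the two $p$-adic Gross--Zagier formulas proved in Sections \ref{Ch:Height} and \ref{Ch:Log}. Corollary \ref{cor:trace} applied with $E_{f} = \Q$ gives $h(\pi(y_K)) = \langle \pi(y_K), \pi(y_K)\rangle_{\ell_\Q} = \tfrac{1}{2} \langle \pi(y_{K,f}), \pi(y_{K,f}) \rangle_{\ell_K}$, and Proposition \ref{cor:PRpadicGZ} rewrites this as $\tfrac{1}{2}(\deg \pi)(1 - 1/\alpha_p(f))^{-4} \calL'_{p,\ell}(f,1)$. For the denominator, the BDP special value formula \eqref{specialvalueBDP} gives $\log_{f\,dq/q}(y_K)^2 = ((1 - a_p(f) + p)/p)^{-2} L_p(f,1)$, and Remark \ref{rem:logonAf} identifies $\log_{f\,dq/q}(y_K)$ with $\log(\pi(y_K))$ viewed on $E$. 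Forming the ratio yields \eqref{eqn:gamma}.

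For computability, $\calL'_{p,\ell}(f,1)$ is supplied by Algorithm \ref{alg:computeheightPR}, $L_p(f,1)$ by Proposition \ref{MainProp} and the method of Section \ref{sec:outrangeinterp}, and the remaining quantities $\deg \pi$, $\alpha_p(f)$, $a_p(f)$ by standard algorithms. For the final sentence of the theorem, $h_p(z)$ is a double Coleman integral and $\log(z)$ a single Coleman integral, so $\rho(z) = h_p(z) - \gamma \log(z)^2$ is locally analytic on each residue disk of $\calX(\Z_p)$. For $z \in \calX(\Z)$, decomposing $h(z) = h_p(z) + \sum_{\ell \neq p} h_\ell(z)$ and using the quadratic-form identity $h(z) = \gamma \log(z)^2$ yields $\rho(z) = -\sum_{\ell \neq p} h_\ell(z)$; by Theorem \ref{Bianchithm} this sum takes values in a finite set of explicit Kodaira-type contributions.

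The main subtlety to watch is the normalization of the $p$-adic logarithm: one must verify that the logarithm $\log_{f\,dq/q}$ appearing in the BDP formula coincides with the Coleman integral $\log(z) = \int_b^z \omega_0$ used to define $\gamma$ in Theorem \ref{Bianchithm}, which amounts to tracking the Manin constant and checking that $\pi^* \omega_0 = f\,dq/q$ for the optimal parametrization. A secondary, practical obstacle is verifying the hypothesis $L(f^\varepsilon, 1) \neq 0$ for the chosen auxiliary $K$; Waldspurger's theorem guarantees suitable $K$ exist, but the nonvanishing must be confirmed numerically for a given computation.
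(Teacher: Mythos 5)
Your proposal follows essentially the same route as the paper: evaluate $\gamma$ at the single point $\pi(y_K)$, compute the numerator via Corollary~\ref{cor:trace} together with Proposition~\ref{cor:PRpadicGZ}, compute the denominator via the special value formula \eqref{specialvalueBDP}, and take the ratio. Your additional remarks (justifying that $\pi(y_K)$ is non-torsion via classical Gross--Zagier under the hypothesis $L(f^\varepsilon,1)\neq 0$, and flagging the normalization of $\log_{f\,dq/q}$ versus $\int\omega_0$, i.e.\ the Manin constant) are sound refinements of points the paper's proof leaves implicit.
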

Theorem~\ref{thm:mainthmelliptic} allows us to determine a finite set of $p$-adic points of $\calX$ containing $\calX(\Z)$ without knowing an infinite order point of $E(\Q)$. 
\begin{proof}
Corollary \ref{cor:PRpadicGZ} shows that
\[\langle   \pi(y_{K,f}),\pi (y_{K,f}) \rangle_{\ell_K}  =  \deg \pi \calL_{p,\ell}'(f, 1)\left(1 - \frac{1}{(\alpha_p(f))}\right)^{-4}\] 
while \eqref{specialvalueBDP}  shows $L_p(f, 1)\left(\frac{1- a_p(f) +p}{p} \right)^{-2}$ is equal to $(\log_{f dq/q}\pi(y_K))^2$. 
Corollary~\ref{cor:trace} implies that 
$  \frac{1}{2} \langle   \pi(y_{K,f}),\pi (y_{K,f}) \rangle_{\ell_K}  = h( \pi(y_K)).$
\end{proof}

\begin{remark}
Suppose $X/\Q$ is a smooth projective geometrically integral curve of genus $2$ with Jacobian isogenous over $\Q$ to $E_1 \times E_2$ having Mordell--Weil rank $2$. These methods allow us to obtain a finite set of $p$-adic points containing $X(\Q)$ without knowing an infinite order point of the Jacobian. One simply follows the formula \cite[Theorem~1.4]{QCI} using the $\gamma_1$ and $\gamma_2$ in Theorem~\ref{thm:mainthmelliptic} associated with each $E_i$ as input.
\end{remark}

\begin{example}
Let $E$ be the elliptic curve with LMFDB label \href{http://www.lmfdb.org/EllipticCurve/Q/43/a/1}{\texttt{43.a1}} and consider $p = 11$ a prime of good ordinary reduction. This is a model for the modular curve $X_0(43)^+$. We choose $D = -7$ a Heegner discriminant for $E$ in which $p$ and $N = 43$ split. Fix a model for $E$
\[\calX:  y^2 + y = x^3 + x^2.\]
As in Examples \ref{ex:height61} and \ref{ex:log37}, we compute the constant $\gamma$:
\begin{align*}
\gamma 
&= \frac{h(\pi(y_{K,f}))}{\log(\pi(y_{K,f}))^2} =\frac{ \calL'_{p, \ell}(f, 1)   \left( \frac{1}{2} \right)\left( 1 - \frac{1}{\alpha_p} \right)^{-4} \deg \pi   }{ L_p(f, 1) \left( \frac{1 - a_p(f) +p}{p}\right)^{-2}}
\\
&= \frac{9\cdot 11 + 5\cdot11^2 + 5\cdot11^3 + 3\cdot11^4 + 7\cdot11^6 + 4\cdot11^7 + 4\cdot11^8 + O(11^9)}{11^2 + 8\cdot11^3 + 9\cdot11^4 + 6\cdot11^5 + 8\cdot11^6 + 6\cdot11^7 + 4\cdot11^8 + 4\cdot11^9 + O(11^{10})} \\
&= 9\cdot11^{-1} + 10 + 2\cdot11 + 4\cdot11^2 + 5\cdot11^4 + 8\cdot11^5 + 10\cdot11^6 + O(11^7).
\end{align*}
We proceed to solve the equations described by \eqref{BianchiQCell}.
The only prime of bad reduction for $E$ is $43$, and the Kodaira type of $E$ over $43$ is $I_1$ so $W = \{0\}$, and so $\calX(\Z) \subset \{ h_p(z) = 
\gamma\log(z)^2\}$.
Using a modified version of the code associated to \cite{BianchiBielliptic}, we obtain the finite set:
\begin{align*}
& \{(-1 , -1 ),
  (-1 , 0 ),
  (0 , -1 ),
  (0 , 0 ),
  (1 , -2),
  (1 , 1),
  (2 , -4),
  (2 , 3 ),
  (21, -99),
  (21 , 98), \\
&(10\cdot11 + 7\cdot11^2 + O(11^3) , 10 + 10\cdot11 + 9\cdot11^2 + 5\cdot11^3 + O(11^4) ),\\
  &(10\cdot11 + 7\cdot11^2 + O(11^3) , 11^2 + 5\cdot11^3 + O(11^4) ),\\
  &(1 + 6\cdot11 + 2\cdot11^2 + O(11^3) , 9 + 3\cdot11^2 + O(11^3) ),\\
 &(1 + 6\cdot11 + 2\cdot11^2 + O(11^3) , 1 + 10\cdot11 + 7\cdot11^2 + O(11^3) ),\\
  &(2 + 9\cdot11 + 7\cdot11^2 + O(11^3) , 3 + 8\cdot11 + 11^2 + O(11^3) ),\\
  &(2 + 9\cdot11 + 7\cdot11^2 + O(11^3) , 7 + 2\cdot11 + 9\cdot11^2 + O(11^3) )\}.
\end{align*}
This contains the $10$ integral points on $\calX$ as well as $3$ pairs of $\Z_{11}$-points conjugate under the hyperelliptic involution.
\end{example}

\subsection{Rational points on higher genus curves}
\label{sec:qcrational}

We now discuss how to extend the results of the previous section to the case of rational points on higher genus curves. 
For rank $1$ genus $1$ curves, we constructed the locally analytic function $\rho(z)$ used in quadratic Chabauty by writing the global height $h$ in terms of $\log_{\omega_0}(z)^2$, a locally analytic basis for $(H^0(X_{\Q_p}, \Omega^1)^\vee \otimes H^0(X_{\Q_p}, \Omega^1)^\vee)^\vee$. This strategy generalizes for finding rational points on higher genus curves.

However, the Coleman--Gross local height functions $h_v$ at each prime $v$ do not immediately extend to functions on $X(\Q_v)$; more sophisticated heights machinery is needed to deal with the heights of rational points. 
For this, we turn to Nekov\'a\v{r}'s theory of $p$-adic heights \cite{NekovarHeights,QCI} to define heights $h^{\Nek}$ and $h_v^{\Nek}$ of mixed extensions of Galois representations associated to points $x \in X(\Q_v)$.

For this section we assume the following.
\begin{assumption}
\label{hyp:qc} 
\hfill
\begin{enumerate}
\item  \label{hyp:simplenewmod} Assume $\phi: X_0(N) \to X$ is a simple new $\Gamma_0(N)$-modular curve;

\item assume $X/\Q$ has genus $g>1$;

\item assume $X(\Q) \neq \emptyset$ and fix a basepoint $b \in X(\Q)$;

\item 
\label{hyp:rankns} assume its Jacobian $J_X(\Q)$ has rank $r = g$;

\item \label{hyp:prime}
 assume $p$ is a prime of good reduction for $X$ such that $\log\colon J(\Q) \otimes \Q_p \to H^0(X_{\Q_p}, \Omega^1)^\vee$ is an isomorphism.

\end{enumerate}
\end{assumption}
Since we rely on the results from the previous sections, we also make the assumptions in \ref{hyp:logs} and \ref{hyp:heights}.
Note that by Assumption \ref{hyp:qc} \eqref{hyp:simplenewmod}, and Lemma \ref{thm:GL2typepartII}, we have $\rho(J_X) = r  = g $.   

Since $\rho(J_X) \geq 2$, we can find some nontrivial correspondence 
$Z \in \ker(\NS(J_X) \to \NS(X))$. Let $K = \Q$ or $\Q_p$. 
As explained in \cite[Section~5]{QCI}, the choice of $Z$ can be used to construct a certain quotient of the two step $\Q_p$-pro-unipotent fundamental group, and, by a twisting construction, for every $x \in X(K)$ we obtain an equivalence class of Galois representations 
\[A_Z(b,x) \in \{ G_{K} \to \GL_{2g+2}(\Q_p)\}/\sim.\] 
These Galois representations are \defi{mixed extensions}: 
they admit a $\G_K$-stable weight filtration with graded pieces 
$\Q_p(1), V \colonequals H^1_{et}(X_{\overline{K}}, \Q_p)^\vee, \Q_p$.  
Nekov\'a\v{r}'s theory of $p$-adic heights  \cite{NekovarHeights} yields a height function $X(\Q) \to \Q_p$ by sending $x \in X(\Q)$ to $h^{\Nek}(A_Z(b, x))$. 

Similar to the story for Coleman--Gross heights, the global height decomposes as a sum of local heights $h^{\Nek} = \sum_v h^{\Nek}_v$. Furthermore, $h^{\Nek}$ is bilinear in the following sense:
for each $z \in X(\Q)$ we have projection maps
\begin{align*}
 \pi_1( A_Z(b,z))  &= [W_0 A_Z(b,z) /W_{-2} A_Z(b,z)] \in H^1_f (G_{\Q}, V) \\
  \pi_2( A_Z(b,z))  &= [W_{-1}A_Z(b,z)] \in H^1_f (G_{\Q}, V^*(1))
\end{align*}
where $W$ denotes the weight filtration on the mixed extension. The height $h^{\Nek}$ is bilinear on $H^1_f (G_{\Q}, V) \times  H^1_f (G_{\Q}, V^*(1))$. Under our assumptions, these cohomology groups are both isomorphic to $H^0 (X_{\Q_p}, \Omega^1)^\vee$ and so $\pi_i$ can also be seen as a map into $H^0 (X_{\Q_p}, \Omega^1)^\vee$. 

The local height $h_p^{\Nek}$ at $p$ can be described in terms of linear algebraic data given by the filtered $\phi$-module associated to $A_Z(b,x)$. For more details see \cite[Section 3.3.2]{examplesandalg} or \cite[Section 4.3.2]{QCI}. We will simply write $h_p^{\Nek}(z)$ for the local height of $z \in X(\Q_p)$, omitting the dependence on $Z$ and $b$.

The quadratic Chabauty function $\rho(z)$ is equal to the difference between the local height at $p$ and the global height $h^{\Nek}(A_Z(b,z) ) - h_p^{\Nek}(z)$. 
The following theorem about $\rho(z)$ is the analog of Theorem \ref{Bianchithm}.  For this theorem it is not necessary to assume $X$ is a simple new modular $\Gamma_0(N)$-curve. 
\begin{theorem}[{\cite[Proposition~5.5]{QCI}}]
\label{mainQCtheorem}
Let $\psi_1, \dots, \psi_M$ be a basis for \\$(H^0(X_{\Q_p}, \Omega^1)^\vee \otimes H^0(X_{\Q_p}, \Omega^1)^\vee)^\vee. $
There are finite computable constants $\alpha_1, \dots, \alpha_M \in \Q_p$ such that the function $X(\Q_p) \to \Q_p$ given by
\begin{align*}
\rho(z) =  \sum_{i=1}^M \alpha_i \psi_i \circ (\pi_1, \pi_2 )(A_Z(b,z)) - h_p^{\Nek}(z) 
\end{align*}
is a locally analytic function. Furthermore, there exists a finite set $S \subset \Q_p$ such that $\{\rho(x) =s: x \in X(\Q_p), s \in S\}$ contains $X(\Q)$.
\end{theorem}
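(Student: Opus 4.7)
The plan is to start from Nekov\'a\v{r}'s decomposition of the global height as a sum of local heights, $h^{\Nek} = \sum_v h_v^{\Nek}$, so that on $X(\Q) \subset X(\Q_p)$,
\[
\rho(z) \;=\; h^{\Nek}(A_Z(b,z)) - h_p^{\Nek}(z) \;=\; \sum_{\ell \neq p} h_\ell^{\Nek}(A_Z(b,z)).
\]
The ``finite set'' conclusion then reduces to showing that each local height $h_\ell^{\Nek}$ for $\ell\neq p$ takes only finitely many values on $X(\Q_\ell)$, and is identically zero for all but finitely many $\ell$. To produce the constants $\alpha_i$, I would exploit the bilinearity of $h^{\Nek}$ on $H^1_f(G_\Q,V)\times H^1_f(G_\Q,V^*(1))$: the composition $h^{\Nek}\circ(\pi_1,\pi_2)$ is an element of the dual of $H^1_f(G_\Q,V)\otimes H^1_f(G_\Q,V^*(1))$, which under Assumption \ref{hyp:qc} \eqref{hyp:prime} and the Bloch--Kato logarithm is identified with $(H^0(X_{\Q_p},\Omega^1)^\vee\otimes H^0(X_{\Q_p},\Omega^1)^\vee)^\vee$. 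Expanding in the chosen basis $\{\psi_i\}$ yields the asserted $\alpha_i\in\Q_p$; their computability (without using rational points) is precisely the point of Theorem \ref{thm:mainthm} below.

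Next I would verify that $\rho$ is locally analytic on all of $X(\Q_p)$. Each component $\pi_i(A_Z(b,z))$ is recovered from the Bloch--Kato log of the two-step unipotent path torsor and is computed by iterated Coleman integrals of differentials pulled back from $X$; the local height $h_p^{\Nek}(z)$ admits a description via the filtered $\phi$-module attached to the mixed extension $A_Z(b,z)$, as in \cite[Section~3.3.2]{examplesandalg} or \cite[Section~4.3.2]{QCI}. Both ingredients depend locally analytically on $z$, so the same is true of their combination $\sum_i\alpha_i\psi_i\circ(\pi_1,\pi_2) - h_p^{\Nek}$.

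The main obstacle is showing that for every $\ell\neq p$ the image $h_\ell^{\Nek}(A_Z(b,X(\Q_\ell)))$ is finite. This is the local-at-$\ell$ content of Kim's program: using a regular proper model of $X$ over $\Z_\ell$, one shows that the N\'eron local pairing, and hence Nekov\'a\v{r}'s local height, factors through the reduction map to the finite combinatorial data of the dual graph of the special fiber (equivalently, through the component group of the N\'eron model of $J_X$ at $\ell$), so takes only finitely many values. At primes of good reduction this finite set is shown to be $\{0\}$, leaving only finitely many contributing $\ell$. Taking $S$ to be the (finite) set of all possible sums $\sum_{\ell\neq p}t_\ell$ with $t_\ell$ in these finite local images then gives a finite set containing $\rho(X(\Q))$, completing the plan.
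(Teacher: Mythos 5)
Your outline is correct and follows essentially the same route as the cited source and as this paper's own treatment: the paper quotes Theorem~\ref{mainQCtheorem} from \cite[Proposition~5.5]{QCI} without reproving it, and the ingredients you use (writing $\rho(z)=\sum_{\ell\neq p}h_\ell^{\Nek}$ on rational points, bilinearity of $h^{\Nek}$ on $H^1_f(G_\Q,V)\times H^1_f(G_\Q,V^*(1))$ identified with $H^0(X_{\Q_p},\Omega^1)^\vee$ via the log map, local analyticity of the Coleman-integral/filtered $\phi$-module descriptions, and finiteness of the local images at $\ell\neq p$ via \cite{KimTamagawa}) are exactly those invoked in the paper's proof of Theorem~\ref{thm:mainthm}. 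The only nuance is that in the quoted statement the computability of the $\alpha_i$ is obtained from sufficiently many rational points or Mordell--Weil generators as in \cite[Section~3.3]{examplesandalg}, rather than being deferred to Theorem~\ref{thm:mainthm}, which is this paper's replacement for that input.
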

The set $S$ is given by computing local heights away from $p$ at primes $v$ of bad reduction. 
 When $X$ has a semistable regular model with geometrically irreducible special fibers, then $S = \{ 0\}$ \cite[Theorem~3.2]{examplesandalg}.

To solve for the $\alpha_i$ in Theorem \ref{mainQCtheorem}, our goal is to find constants such that  $\sum_{i=1}^M \alpha_i \psi_i \circ (\pi_1, \pi_2 )(A_Z(b,z)) = h^{\Nek}(A_Z(b,z))$, thus rewriting the global height as a locally analytic function.

Computing the $\alpha_i$ requires knowing \defi{sufficiently many rational points on $X$} \cite[Section~3.3]{examplesandalg}.  We need enough $z \in X(\Q)$ to find a basis of  $H^0 (X_{\Q_p}, \Omega^1) \otimes  H^0 (X_{\Q_p}, \Omega^1)$ of the form $(\pi_1(A_Z(b,z)), \pi_2(A_Z(b,z)))$ where $Z \in \ker(\NS(J_X) \to \NS(X))$ is nontrivial. The number of required rational points can also be decreased by working with symmetric $\End(J_X)$-equivariant heights.

If we do not have sufficiently many rational points, we can also use $r= g$ independent points on $J_X(\Q)$. Note that $(\pi_1(A_Z(b,z)), \pi_2(A_Z(b,z)))$ can be expressed in terms of a dual basis of $H^0 (X_{\Q_p}, \Omega^1)$, and $H^0 (X_{\Q_p}, \Omega^1)^\vee \simeq J_X(\Q)\otimes \Q_p$. The formula to represent $\pi_i(A_Z(b,z))$ in terms of the dual basis is given in \cite[(41)]{RecentApproaches}.
Furthermore, Besser \cite{BesserCGNekovar} gives an equivalence between the construction of the height due to Coleman and Gross and that of Nekov\'a\v{r}. In particular, they can be related through the study of a certain divisor, also studied in \cite{DRS}.
\begin{definition}
\label{defn:Dzb}
Define $D_{Z}(b, z)$ to be the degree zero divisor on $X$ given by $D_{Z}(b,z) \colonequals {Z}|_{\Delta} - {Z}|_{X \times b} - {Z}|_{z \times X} $.
\end{definition}
\begin{theorem}[{\cite[Theorem~6.3]{QCI}}]
\label{thm:cgnekheight}
Let $z \neq b$ be an element of $X(\Q)$.  Then
$h^{\Nek}(A_Z(b,z)) = h(z-b, D_{Z}(b,z))$.
\end{theorem}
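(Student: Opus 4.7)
The plan is to reduce the identity to two ingredients: first, an explicit description of the two projections $\pi_1(A_Z(b,z))$ and $\pi_2(A_Z(b,z))$ as Kummer classes of explicit divisors on $X$; and second, the comparison of Nekov\'a\v{r}'s mixed-extension height pairing with the Coleman--Gross height pairing due to Besser \cite{BesserCGNekovar}. Once each projection is identified with the class of a divisor, bilinearity of both heights and the Besser comparison immediately give the desired equality.

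First, I would unwind the construction of the mixed extension $A_Z(b,z)$ from the depth $2$ quotient of the $\Q_p$-pro-unipotent fundamental group. By construction, the top-weight piece of the extension (the class $\pi_1(A_Z(b,z)) \in H^1_f(G_\Q, V)$) is the \'etale realization of the Abel--Jacobi map applied to $z - b$; under the canonical isomorphism $H^1_f(G_\Q, V) \isom J_X(\Q) \otimes \Q_p$ it corresponds to $[z - b]$. This is the standard content of the unipotent Albanese at depth $1$ and requires only tracking the definition of the twist.

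Second, and more involved, I would identify $\pi_2(A_Z(b,z)) \in H^1_f(G_\Q, V^\ast(1))$ with the class of the divisor $D_Z(b,z) = Z|_\Delta - Z|_{X \times b} - Z|_{z \times X}$ in $J_X(\Q) \otimes \Q_p \isom H^1_f(G_\Q, V^\ast(1))$. The correspondence $Z$ enters the construction of $A_Z(b,z)$ precisely as a cohomology class in $\NS(J_X)$ that lifts to an element of $H^1(X, V)$; tracing through the twisting construction (as in \cite[\S 5]{QCI}) shows that the resulting bottom extension is the Kummer class of exactly this divisor. To carry out this step cleanly one should work at the level of the universal mixed extension and exploit the K\"unneth decomposition for $X \times X$, which makes the three contributions $Z|_\Delta$, $Z|_{X \times b}$, and $Z|_{z \times X}$ appear naturally.

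Having identified both extensions, I would invoke Besser's theorem \cite{BesserCGNekovar}: with the splitting of the Hodge filtration on $H^1_{\mathrm{dR}}(X_{\Q_p})$ given by the unit root subspace (which is the choice implicit throughout this paper), Nekov\'a\v{r}'s height pairing on $H^1_f(G_\Q, V) \times H^1_f(G_\Q, V^\ast(1))$ agrees with the Coleman--Gross pairing on pairs of disjoint degree zero divisors. Applying this with the divisors $z-b$ and $D_Z(b,z)$ yields $h^{\Nek}(A_Z(b,z)) = h(z-b, D_Z(b,z))$. The main obstacle in practice is the second step: making the twisting construction explicit enough to recognize $D_Z(b,z)$, together with the technical requirement that the two divisors have disjoint support, which may require moving the cycle $Z$ within its rational equivalence class and using continuity (or bilinearity) of the height to pass to the limit.
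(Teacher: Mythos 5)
The paper does not actually prove this statement: it is imported verbatim from Balakrishnan--Dogra \cite[Theorem~6.3]{QCI}, so there is no internal argument here to compare against. Your outline --- identifying $\pi_1(A_Z(b,z))$ with the class of $z-b$, identifying $\pi_2(A_Z(b,z))$ with the class of $D_Z(b,z)$ by unwinding the twisting construction, and then applying Besser's comparison \cite{BesserCGNekovar} of the Nekov\'a\v{r} and Coleman--Gross pairings for the unit-root splitting (with the usual care about disjointness of supports) --- is essentially the strategy of the cited proof, so it is the correct approach.
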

 Therefore it is sufficient to express the height pairing in terms a basis for symmetric bilinear pairings on $J_X(\Q) \otimes \Q_p$ for a basis of $J_X(\Q)$. Then, given a choice of $Z$ and $b$ this determines a locally analytic function $h^{\Nek}: X(\Q_p) \to \Q_p$. 


This strategy for determining the height from Jacobian points is basis of the strategy we use in the proof of our main theorem, Theorem \ref{thm:mainthm}. However, because of the modular nature of our arguments, we do not need to explicitly describe a basis for $J_X(\Q)$. We also have a simplified calculation when computing the height pairing in terms of a basis of symmetric bilinear pairings on $J_X(\Q)$ because of the choice of dual basis. 

We now present a construction of $\rho(z)$ as a locally analytic function for simple new $\Gamma_0(N)$-modular curves that does not require knowing rational points on $X$ or $J_X$, other than the basepoint $b$. The theorem relies on Assumptions \ref{hyp:heights}, \ref{hyp:logs}, and \ref{hyp:qc}.
 This is the main result of this section.

\begin{theorem}
\label{thm:mainthm}
Let $\phi: X_0(N) \to X$ be a simple new $\Gamma_0(N)$-modular curve with associated $f \in \New_N$ of analytic rank $1$ and Jacobian $J_X$. Let $Z \in \ker(\NS(J_X) \to \NS(X))$ be a nontrivial correspondence and $b \in X(\Q)$ a choice of basepoint, and recall the divisor $D_Z(b,z)$ from Definition \ref{defn:Dzb}.

Let $p$ be a good prime that is ordinary for all $f^\sigma$, for $\sigma \in \Gal(E_f/\Q)$. Assume $p$ splits in $E_f$ and let $e$ be a choice of embedding $e: E_f \to \Q_p$. Recall that $\varepsilon$ is the quadratic character associated with the imaginary quadratic field $K$.

Define constants
\[\alpha_\sigma \colonequals \frac{ \frac{1}{2} (\deg(\phi) \calL_{p,\ell}'(f^\sigma,1)\left(1 - \frac{1}{e(\alpha_p(f^\sigma))}\right)^{-4}}{L_p(f^\sigma, 1)e(\left(\frac{1- a_p(f^\sigma) +p}{p} \right)^{-2})}\] for $\sigma \in \Gal(E_f/\Q)$. Assume $L(f^\varepsilon, 1) \neq 0$.

The $\alpha_\sigma$ are computable and for $z \in X(\Q)$ we have
\[\sum_{\sigma \in \Gal(E_f/\Q)} \alpha_\sigma (\log_{f^\sigma dq/q}(z))^2 = h(z).\] Hence
\[ \rho(z) =   \sum_{\sigma \in \Gal(E_f/\Q)} \alpha_{\sigma} \log_{f^\sigma dq/q}(z-b)\log_{f^\sigma dq/q}(D_Z(b,z)) - h_p^{\Nek}(z) \] is a locally analytic function on $X(\Q_p)$ away from $b$.
Furthermore, there exists a finite set $S \subset \Q_p$ such that $\{\rho(z) = s : z \in X(\Q_p), s \in S\}$ contains $X(\Q)$.
\end{theorem}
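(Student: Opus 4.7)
The plan is to diagonalize the $p$-adic height pairing on $J_X(\Q)\otimes\Q_p$ using the Hecke action, so that the global height of any rational point reduces to height and logarithm data of the Heegner point, which are exactly the quantities computed in Sections~\ref{Ch:Height} and~\ref{Ch:Log}.

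First I would set up the orthogonal decomposition. By Lemma~\ref{thm:GL2typepartII}, $\End^0_\Q(J_X)=E_f$, and Assumption~\ref{hyp:qc}\eqref{hyp:rankns} gives that $J_X(\Q)\otimes\Q$ is a rank-one $E_f$-module. Since $p$ splits in $E_f$, $E_f\otimes\Q_p\cong\prod_\sigma\Q_p$ and so $J_X(\Q)\otimes\Q_p=\bigoplus_\sigma V_\sigma$, each $V_\sigma$ one-dimensional. The Rosati involution on $E_f$ is trivial (Section~\ref{sec:Heegnerpoints}), so Hecke operators are self-adjoint and the $V_\sigma$ are pairwise orthogonal under $\langle\cdot,\cdot\rangle_{\ell_\Q}$. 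The finite-index property $\calO_{f_X}\pi(y_K)\subset J_X(\Q)$ ensures that $\pi(y_{K,f^\sigma})$ spans $V_\sigma$; a short Galois argument (complex conjugation commutes with the $E_f$-action on $J_X$ and fixes the $\Q$-rational point $\pi(y_K)$) places each $\pi(y_{K,f^\sigma})$ in $J_X(\Q)\otimes E_f$. Distinctness of Hecke eigenvalues, together with Hecke-equivariance of the Coleman logarithm, forces $\log_{f^\sigma dq/q}$ to vanish on every $V_\tau$ with $\tau\neq\sigma$.

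Next I would identify the constants. On each one-dimensional $V_\sigma$, $\langle\cdot,\cdot\rangle_{\ell_\Q}$ is a scalar multiple of $\log_{f^\sigma dq/q}(\cdot)^2$; evaluating at the generator $\pi(y_{K,f^\sigma})$ gives
\begin{align*}
\alpha_\sigma=\frac{\langle\pi(y_{K,f^\sigma}),\pi(y_{K,f^\sigma})\rangle_{\ell_\Q}}{\log_{f^\sigma dq/q}(\pi(y_{K,f^\sigma}))^2}.
\end{align*}
The numerator is $\tfrac12\langle\cdot,\cdot\rangle_{\ell_K}$ by the trace identity, which applies termwise since each $\pi(y_{K,f^\sigma})\in J_X(\Q)\otimes E_f$ (cf.\ Corollary~\ref{cor:trace}), and Proposition~\ref{cor:PRpadicGZ} rewrites this as $\tfrac12(\deg\phi)\calL'_{p,\ell}(f^\sigma,1)(1-1/\alpha_p(f^\sigma))^{-4}$. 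For the denominator, vanishing of $\log_{f^\sigma dq/q}$ on the other eigenspaces gives $\log_{f^\sigma dq/q}(\pi(y_{K,f^\sigma}))=\log_{f^\sigma dq/q}(\pi(y_K))$, which by Remark~\ref{rem:logonAf} and \eqref{specialvalueBDP} equals $((1-a_p(f^\sigma)+p)/p)^{-2}L_p(f^\sigma,1)$. This matches the prescribed $\alpha_\sigma$, and the orthogonal decomposition yields the quadratic form identity $\sum_\sigma\alpha_\sigma(\log_{f^\sigma dq/q}(z))^2=h(z)$ for $z\in X(\Q)$.

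Polarizing gives the bilinear identity $\langle v_1,v_2\rangle_{\ell_\Q}=\sum_\sigma\alpha_\sigma\log_{f^\sigma dq/q}(v_1)\log_{f^\sigma dq/q}(v_2)$. Taking $(v_1,v_2)=(z-b,D_Z(b,z))$ and invoking Theorem~\ref{thm:cgnekheight} shows that, for $z\in X(\Q)$, $\rho(z)=h^{\Nek}(A_Z(b,z))-h_p^{\Nek}(z)=\sum_{v\neq p}h_v^{\Nek}(A_Z(b,z))$. Local analyticity of $\rho$ on $X(\Q_p)$ away from $b$ is immediate: the two logarithm factors are Coleman integrals that depend algebraically on $z$ (for the factor involving $D_Z(b,z)$, observe that this divisor is an algebraic function of $z$), and $h_p^{\Nek}$ is locally analytic on each residue disk by \cite[Section~4.3.2]{QCI}. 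The finite set $S\subset\Q_p$ is then furnished, as in the original quadratic Chabauty framework and Theorem~\ref{mainQCtheorem}, by the finitely many possible contributions of $\sum_{v\neq p}h_v^{\Nek}$ at the bad primes. The main bookkeeping obstacle is tracking normalizations across the several flavors of heights, logarithms and $p$-adic $L$-functions ($\ell_K$ versus $\ell_\Q$, Coleman--Gross versus Nekov\'a\v{r}, $J_0(N)$ versus $J_X$), but every needed conversion is supplied by the earlier sections.
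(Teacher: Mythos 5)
Your proposal is correct and takes essentially the same route as the paper: write the global height in the basis of products of $\log_{f^\sigma dq/q}$, pin down the coefficients $\alpha_\sigma$ using the Heegner point via Corollary~\ref{cor:trace}, Proposition~\ref{cor:PRpadicGZ} and the special value formula \eqref{specialvalueBDP}, then pass to Nekov\'a\v{r} heights with Theorem~\ref{thm:cgnekheight} and conclude from the finiteness of the local contributions away from $p$. Your explicit diagonalization over $E_f\otimes\Q_p$ (orthogonality of the eigenspaces $V_\sigma$ and vanishing of $\log_{f^\sigma dq/q}$ off $V_\sigma$) spells out more carefully the step the paper handles via the $\calO_f$-action on $\pi(y_K)$, but it is the same underlying argument, so no substantive difference.
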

We give algorithms to compute the numerator and denominator of the constants $\alpha_\sigma$ appearing in Theorem \ref{thm:mainthm} in Section \ref{Ch:Height} and \ref{Ch:Log} respectively.

\begin{proof}
Since 
\[\{ f^\sigma  dq/q \text{ for } \sigma \in \Gal(E_f/\Q) \}\]
 is a basis for $H^0(X_{\Q_p}, \Omega^1)$
the functions 
\[\frac{1}{2} (\log_{f^\sigma  dq/q}(D ) \log_{f^\tau dq/q}(E)  + \log_{f^\sigma  dq/q}(E ) \log_{f^\tau dq/q}(D)) \text{ for }\sigma, \tau \in \Gal(E_f/\Q)\] form a basis for the symmetric bilinear pairings on $J_X(\Q) \otimes \Q_p$ by Assumption \ref{hyp:qc} \eqref{hyp:prime}.

We will show that for $z \in X(\Q)$, we have the equality $\rho(z) = h^{\Nek}(A_Z(b,z)) - h_p^{\Nek}(z)$, and in particular
\begin{align}
\label{eqn:heightsetting}
\sum_{\sigma \in \Gal(E_f/\Q)} \alpha_\sigma (\log_{f^\sigma dq/q}(z))^2 = h(z).\end{align}

Let $\pi: J_0(N) \to J_X$ be induced by the pushforward of $\phi$.
Corollary \ref{cor:PRpadicGZ} shows that
\[\langle   \pi (y_{K,f^\sigma}),\pi (y_{K,f^\sigma}) \rangle_{\ell_K}  =  \deg (\phi) \calL_{p,\ell}'(f^\sigma, 1)\left(1 - \frac{1}{e(\alpha_p(f^\sigma))}\right)^{-4}\] 
while \eqref{specialvalueBDP}  shows $L_p(f^\sigma, 1)e(\left(\frac{1- a_p(f^\sigma) +p}{p} \right)^{-2})$ is equal to $(\log_{f^\sigma dq/q}\pi(y_K))^2$. 

Corollary \ref{cor:trace} implies that 
\[  \sum_{\sigma  \in \Gal(E_f/\Q)}\alpha_\sigma (\log_{f^\sigma dq/q}\pi(y_K))^2 = h( \pi(y_K))\]
where $h:J(\Q)\to\Q_p$ is the global $p$-adic cyclotomic height of Coleman and Gross.

Recall from Section \ref{sec:Heegnerpoints} that Hecke acts via an order $\calO_f$ in $K$ and $\calO_f y_K$ generates a finite index subgroup of $J_X(\Q)$.  Consider the action of $\calO_f$ as through the embedding $e: \calO_f \to \Q_p$ so that $\calO_f \pi(y_K) \subseteq J_X(\Q)\otimes \Q_p$.
The bilinearity of the height implies that for all $C_1, C_2 \in \Q_p$,  
\[\langle C_1 \pi(y_K), C_2 \pi(y_K) \rangle_{\ell_\Q} = C_1 C_2  \langle \pi(y_K), \pi(y_K) \rangle_{\ell_\Q}.\]

Every $D \in J_X(\Q)$ can be written as $C \pi(y_K)$ for some $C \in \Q_p$.
Therefore, since the logarithm is linear 
 \[\langle \pi(y_K), \pi(y_K) \rangle_{\ell_\Q} = \sum_{\sigma \in \Gal(E_f/\Q)} \alpha_{\sigma} \log_{f^\sigma dq/q}(\pi(y_K))\log_{f^\sigma dq/q}(\pi(y_K)) \] implies  that 
  \begin{align}
  \label{eqn:heightsinJac}
  \langle D, E \rangle_{\ell_\Q} = \sum_{\sigma \in \Gal(E_f/\Q)} \alpha_{\sigma} \log_{f^\sigma dq/q}(D)\log_{f^\sigma dq/q}(E)
   \end{align}
  for all $D, E \in J_X(\Q)$.

Then $\log_{f^\sigma dq/q}(z)$ has a power series expansion in each residue disk, and is locally analytic on $X(\Q)$.
We can then extend $h^{\Nek}$ to a locally analytic function on $x \in X(\Q_p)$ away from $b$. By Theorem \ref{thm:cgnekheight}, we have the equalities \[h^{\Nek}(A_Z(b, z)) = h(\pi_1(A_Z(b, z)), \pi_2(A_Z(b, z))) = h(z-b, D_Z(b,z)).\] Then $z-b$ and $D_Z(b,z)$ can be viewed as elements of $J_X(\Q) \otimes \Q_p$ and therefore $h(z-b, D_Z(b,z))$ can be evaluated using \eqref{eqn:heightsinJac}, so 
\[h^{\Nek}(A_Z(b,z))= \sum_{\sigma \in \Gal(E_f/\Q)} \alpha_{\sigma} \log_{f^\sigma dq/q}(z-b)\log_{f^\sigma dq/q}(D_Z(b,z)).\]

For each $v \neq p$, the local height  $h_v^{\Nek}(X(\Q_v)) \subseteq S_v \subset \Q_p$ has finite image \cite{KimTamagawa} and we define $S \colonequals \{ \sum_v s_v : s_v \in S_v\}$. Then 
\[\rho(z) = h^{\Nek}(A_Z(b, z)) - h^{\Nek}_p(z) = \sum_{v \neq p} h^{\Nek}_v(z) \] and therefore
$\{\rho(z) = s : z \in X(\Q_p), s \in S\}$ contains $X(\Q)$.

Finally, $h_p(z)$ is the solution to a $p$-adic differential equation and therefore also locally analytic \cite[Lemma~3.7]{QCCartan}. Thus $\rho(z)$ is a locally analytic function on $X(\Q_p)$ away from $b$.
\end{proof}

\begin{example}
\label{ex:X067}
We consider the case of $X_0(67)^+$, a genus $2$ rank $2$ hyperelliptic curve. 
The rational points for $X_0(67)^+$ were previously determined in \cite{RecentApproaches}, but we give a new approach here.
Let $f$ and $f^\sigma$ be the newforms in the orbit \href{https://www.lmfdb.org/ModularForm/GL2/Q/holomorphic/67/2/a/b/}{67.2.a.b}. Then $E_f = \Q(\nu)$ where $\nu$ has minimal polynomial $z^2 - z - 1$. Let $f$ be the newform with the $q$-expansion
\begin{align*}
f(q) = q + (-\nu - 1)q^2 + (\nu - 2)q^3 + 3\nu q^4 - 3q^5 + q^6 + O(q^7).
\end{align*}
Let $p = 11$ and $D = -7$.
We fix the embedding $e: E_f \to \Q_p$ sending
$\nu \mapsto  4 + 3\cdot 11 + 3\cdot 11^3 + O(11^4).$ 

Using the methods of Section \ref{Ch:Log} we find that
\begin{align*}
\log_{f dq/q}(\pi(y_K))^2 &= 3\cdot 11^2 + 9\cdot 11^3 + 10\cdot 11^4 + 4\cdot 11^5 + 8\cdot11^6 + O(11^7),\\
\log_{f^\sigma dq/q}(\pi(y_K))^2 &= 11^2 + 11^4 + 11^5 + 9\cdot 11^6 + 6\cdot11^7 + O(11^8).
\end{align*}

To specify $\rho(z)$ in terms of a basis of symmetric bilinear forms on $J(\Q) \otimes \Q_p$, we need to relate the basis back to the coordinates of $X$.
Let $g_1$ and $g_2$ be a basis of modular forms for $S_2(67)_{\new}^+$ given by
\begin{align*}
g_1(q) &\colonequals q - 3q^3 - 3q^4 - 3q^5 + q^6  +  4q^7+ 3q^8 + 5q^9 - O(q^{10})\\
g_2(q) &\colonequals q^2 - q^3 - 3q^4  + 3q^7 + 4q^8 + 3q^9 - O(q^{10}). \notag
\end{align*}
We can construct a model $X$ of $X_0(67)^+$ over $\Q$ where under the identification $H^0(X, \Omega^1) \simeq S_2(67)_{\new}^+$, the differential $dx/y$ is $g_1$ and $xdx/y$ is $g_2$ by letting $x = g_2/g_1$ and $y = q dx/g_1$ and solving for the linear dependence in the monomials $1, x, x^2, \dots, x^6, y^2$.
The resulting model is 
\begin{align}
\label{eqn:model67ks}
X: y^2 = h(x) = 9x^6 - 14x^5 + 9x^4 - 6x^3 + 6x^2 - 4x + 1.
\end{align}
Then since $f = g_1 - (\nu +1) g_2$, we have 
\begin{align}
\label{reln:omegaf}
f dq/q = dx/y  - (\nu +1) x dx/y.
\end{align}
Let $Z$ be the trace zero correspondence associated to $T_p$. We fix an arbitrary choice of basepoint $b = [ 1: 1:2 ]$.
Write $A \colonequals 1$ and $B \colonequals - \nu - 1$. 

Using the techniques in Section \ref{Ch:Height} we get that
\begin{align*}
&\langle  \pi(y_{K, f }), \pi(y_{K, f })\rangle_{\ell_K} = 
4\cdot 11 + 3\cdot11^2 + 9\cdot11^3 + 11^5 + 3\cdot11^6 + 2\cdot11^7 + O(11^8)\\
&\langle \pi(y_{K, f^\sigma }),\pi(y_{K, f^\sigma }) \rangle_{\ell_K} = 10\cdot11 + 9\cdot11^3 + 8\cdot11^4 + 10\cdot11^5 + 7\cdot11^6 + 9\cdot11^7 + O(11^8)
\end{align*}
Let \[\alpha_1 \colonequals \frac{  \langle  \pi(y_{K, f }), \pi(y_{K, f })\rangle_{\ell_K} }{(\log_{f dq/q} \pi(y_K))^2 } \text{ and } \alpha_2 \colonequals \frac{\langle \pi(y_{K, f^\sigma }),\pi(y_{K, f^\sigma }) \rangle_v }{(\log_{f^\sigma dq/q } \pi(y_K))^2 }.\]
Then using linearity of the logarithm and \eqref{reln:omegaf}, by setting $\alpha_{00}  = \alpha_1A^2  +\alpha_2 A^{\sigma 2} $, $\alpha_{01} =2  ( \alpha_1 A B + \alpha_2 A^\sigma B^\sigma ) $ and $\alpha_{11} = \alpha_1 B^2 + \alpha_2 B^{\sigma 2} $ we obtain the relation
\begin{align*}
&\langle D,E \rangle_{\ell_{\Q}}=  \alpha_{00}   \log_{dx/y}(D) \log_{dx/y} (E) + \alpha_{01} \frac{1}{2} ( \log_{dx/y}(D) \log_{xdx/y} (E) +\log_{\omega_1}(D) \log_{dx/y} (E)  ) + \\
&\alpha_{11}  \log_{xdx/y}(D) \log_{xdx/y} (E) .
\end{align*}
 We use the basis $dx/y, xdx/y$ for convenience: this is the default basis in the code \cite{QCMod}.

Let $\rho = h^{\Nek} - h_p^{\Nek}$. We can construct $\rho$ as a locally analytic function and
solve for $\rho = 0$ using the code \cite{QCMod}: the input to this construction is the coefficients $\alpha_{ij}$. From this, the code writes $h^{\Nek}$ as a locally analytic function by expressing $\pi_i(A_Z(b, x))$ in terms of a dual basis. It also writes $h_p^{\Nek}$ as a locally analytic function. This process recovers the points found in \cite[Table~1]{RecentApproaches}.

Galbraith \cite{Galbraith} computed the Heegner point for some Atkin--Lehner quotients of modular curves; his computations show $\pi(y_K) = [1:-1:1] - [0:1:1]$ on the model \eqref{eqn:model67ks}. Using this, and forthcoming work of Gajovi\'{c} for computing local Coleman--Gross heights on even degree hyperelliptic curves we verified the above logarithm and height calculations.
\end{example}

\begin{example}
Let $f$ and $f^\sigma$ be the newforms in the orbit \href{https://www.lmfdb.org/ModularForm/GL2/Q/holomorphic/85/2/a/b/}{85.2.a.b} defined over $E_f = \Q(\sqrt{2})$.  Let
\begin{align*}
f = q + (\sqrt{2} - 1)q^2 + (-\sqrt{2} - 2)q^3 + (-2\sqrt{2} + 1)q^4 - q^5 - \sqrt{2}q^6 + O(q^7).
\end{align*}
We finish Example \ref{ex:log85} by studying the height the Heegner point $X^*_0(85)$. The rational points of $X^*_0(85)$ were determined by \cite{BarsGonzalesXarles}.

Let $p = 7$ and  $D = - 19$. In this example, we let $Z$ be the trace zero correspondence associated with $T_p$, and $b = [ 2: 38:5 ]$ on the model below. Recall we have fixed a $p$-adic embedding $e: E_f \to \Q_p$ by $\sqrt{2} \mapsto 3 + 7 + 2\cdot 7^2 + 6\cdot 7^3 +O(7^4).$
Using the methods described in Section \ref{Ch:Height} and already exhibited in the previous examples, we find
\begin{align*}
&\langle \pi(y_{K,f}), \pi(y_{K,f})\rangle_{\ell_K} = 
3\cdot 7^{-3} + 7^{-2} + 2 + 3\cdot7 + 3\cdot7^2 + 7^3 + 7^4 + 5\cdot7^5 + 7^6 +O(7^7)\\
&\langle \pi(y_{K,f^\sigma}),\pi(y_{K,f^\sigma}) \rangle_{\ell_K}  = 3\cdot7 + 6\cdot7^2 + 4\cdot5\cdot7 + 4\cdot 7^2 + 3\cdot7^3 + 2\cdot7^4 + 6\cdot7^6 + 5\cdot7^7  +O(7^8).
\end{align*}
Note that in this case the degree of the quotient $X_0(85) \to X^*_0(85)$ is $4$.

Picking a basis of newforms $g_1=q^2 - q^3+ \cdots, \, g_2=q - 3q^3 + \cdots$ with rational coefficients for the weight $2$ and level $85$ space of newforms with both Atkin--Lehner signs equal to $+1$, we can construct a rational model for $X_0(85)^*$ with $g_1 dq/q = dx/y$ and $g_2 = xdx/y$.
The model is
\[y^2 = x^6 - 4x^5 + 12x^4 - 22x^3 + 32x^2 - 40x + 25.\]
Furthermore we have the relationship
$(\sqrt{2} - 1)dx/y + xdx/y = f dq/q$.
On this model,  $(\log_{fdq/q}([1:-1:0] - [1:1:0]))^2$ agrees with $(\log_{fdq/q} \pi(y_K))^2$.
Furthermore, since $[2:5:1] - [2:-5:1]$ is linearly equivalent to twice $[1:-1:0] - [1:1:0]$, we can compute the global height of this divisor using forthcoming work of Gajovi\'{c} for computing local Coleman--Gross heights on even degree hyperelliptic curves:
\begin{align*}
&h([1:-1:0] - [1:1:0]) = \frac{1}{2} h_p([1:-1:0] - [1:1:0], [2:5:1] - [2:-5:1]) \\
&=5\cdot7^{-3} + 1 + 4\cdot7 + 6\cdot7^3 + 7^4 + 6\cdot7^5 + 3\cdot7^6 + 3\cdot7^7 + 3\cdot7^8 + 5\cdot7^9 + O(7^{10}).
\end{align*}
In this case, there are local height contributions away from $p$, which we did not compute.
\end{example}

\begin{example}
Let $f$ and $f^\sigma$ be the newforms in the newform orbit \href{https://www.lmfdb.org/ModularForm/GL2/Q/holomorphic/107/2/a/a/}{\texttt{107.2.a.a}} defined over $E_f= \Q(\nu)$ where $\nu$ satisfies the polynomial $z^2 - z - 1$. Let 
\begin{align}
f = q - \nu q^2 + (\nu - 2)q^3 + (\nu - 1)q^4 + (\nu - 2)q^5 + (\nu - 1)q^6  + O(q^7).
\end{align}
We consider the curve $X_0(107)^+$. The rational points of this curve were determined  \cite[Example~5.3]{examplesandalg}. In this case, there were not sufficiently many points on the curve to determine the height pairing, but one can use a pair of independent infinite order points on the Jacobian to set the Coleman--Gross height. We offer an alternative strategy.

Let $p = 11$ and  let $K$ be the imaginary quadratic field with discriminant $D = - 7$. 
Let $e: E_f \to \Q_p$ sending $\nu \mapsto  4 + 3\cdot 11 + 3\cdot 11^3 + O(11^4).$

Again, by picking a basis of newforms $j_1 = q - 2q^3 - q^4 + \cdots, \, j_2 = q^2 - q^3+ \cdots$ with rational coefficients for the weight $2$ and level $107$ space of newforms with Atkin--Lehner sign $+1$, we can construct a rational model for $X_0(107)^+$ with $j_1 dq/q = dx/y$ and $j_2 = xdx/y$.
Our model is
\[y^2 = x^6  - 10x^5+ 17 x^4- 18x^3 + 10x^2  - 4x  +1.\]
On this model $dx/y - \nu xdx/y = f dq/q$. Let $Z$ be the trace zero correspondence associated to $T_p$, and $b= [ 1: 1:2]$.
We can find a finite index subgroup of the Mordell--Weil group generated by the classes of
$Q_1 \colonequals [0:1:1]- [0:-1:1]$ and $Q_2 =[1/2:-1/8:1] - [0:-1:1]$.
The logarithms $L_{Q_i}\colonequals \log_{f dq/q} Q_i$ (under the embedding $e$) are
\begin{align*}
L_{Q_1}&=3\cdot11 + 4\cdot11^2 + 2\cdot11^3 + 9\cdot11^4 + 11^5 + 10\cdot11^7 + 7\cdot11^8 + 4\cdot11^9 + O(11^{10})
\\
L_{Q_2}&= 2\cdot11 + 8\cdot11^2 + 7\cdot11^4 + 4\cdot11^5 + 6\cdot11^6 + 3\cdot11^7 + 3\cdot11^8 + O(11^{10}).
\end{align*}
We also can compute the logarithm of the Heegner point using the techniques described in Section \ref{Ch:Log}.
The values of $\ell(r)$ are given in Table \ref{107emb2}.

\begin{table}[h]
\begin{subtable}[h!]{0.45\textwidth}
\begin{center}
\begin{tabularx}{120pt}{cr}
$r$ & $\ell(r) \mod \pp^{5}$\\
\midrule
$10$  & $-22250  $\\
$20$  & $-17899 $\\
$30$ & $-70252$\\
$40$ &$28890$ \\
$50$ & $56376$ \\
\end{tabularx}
\caption{$\ell(r)$ for $f$} 
\label{107emb2}
\end{center}
\end{subtable}
\begin{subtable}[h!]{0.45\textwidth}
\begin{center}
\begin{tabularx}{120pt}{cr}
$r$ & $\ell(r) \mod \pp^{5}$\\
\midrule
$10$  & $39142  $\\
$20$  & $70280 $\\
$30$ & $39031$\\
$40$ &$-40900$ \\
$50$ & $49703$ \\
\end{tabularx}
\caption{$\ell(r)$ for $f^\sigma $}
\label{107emb1}
\end{center}
\end{subtable}
\caption{Computations for $f$ in  \href{https://www.lmfdb.org/ModularForm/GL2/Q/holomorphic/107/2/a/a/}{\texttt{107.2.a.a}}}
\end{table} 
Then $L_p(f, 1)=-50731 +~O(11^5)$ and the logarithm of the Heegner point is
\[(\log_{f dq/q}\pi(y_K))^2 = 4\cdot11^2 + 8\cdot11^4 + 2\cdot11^6 + O(11^7).\]
Then we can check that when $A = \pm 2$ and $B = \pm 1$ we have the relation
\[A^2 (L_{Q_1}/2 )^2 + 2AB (L_{Q_1}/2)( L_{Q_2}/2) + B^2( L_{Q_2}/2)^2 = (\log_{f dq/q} \pi(y_K))^2 .\] 
The division by two occurs because the $Q_i$ are not $2$-saturated in the Mordell--Weil group.
For the conjugate modular form $f^\sigma dq/q$ we have the values given in Table~\ref{107emb1} and so
\[L_p(f^\sigma, 1) = 37471 + O(11^5)\] and therefore 
\[(\log_{f^\sigma dq/q} \pi(y_K))^2 = 3\cdot11^2 + 4\cdot11^3 + 2\cdot11^5 + 10\cdot11^6 + O(11^7).\]

We finish the example by computing the heights of $\pi(y_{K,f})$.
To do this we fix the complex embedding $e_c: E_f \to \C $ by $\nu \mapsto 1.61803$
First, using \cite{DanCollinsPetersson} we can compute
\begin{align*}
 \Omega_f = 42.114698 \hspace{.5in}
 \Omega_{f^\sigma} = 51.071742.
 \end{align*}
In Sage, we can also compute
\begin{align*}
&\frac{d}{dT} L_{p, MTT}(f,T)\bigg|_{T = 0}  =
 4 + 7\cdot11 + 7\cdot11^2 + 7\cdot11^3 + 8\cdot11^4 + 11^5 + 3\cdot11^6 + O(11^7)\\
&\frac{d}{dT} L_{p, MTT}(f^\sigma,T) \bigg|_{T = 0}  =
6 + 11 + 6\cdot11^2 + 9\cdot11^3 + 7\cdot11^5 + 7\cdot11^6 +  O(11^7).
\end{align*}
Let $\chi$ be the quadratic character with $D' = 5$ and recall that $\varepsilon$ denotes the quadratic character associated with $K$.
Following Algorithm \ref{alg:plusperiod} we find that the periods are
 $\Omega_f^+ = L(f^\chi, 1)/(-4 / \sqrt{5})$ and  $\Omega_{f^{\sigma}}^+ = L(f^{\sigma \chi}, 1)/( (-4) / \sqrt{5})$. 
We compute 
\begin{align*}
&L(f^\chi, 1)= 3.948128 \hspace{.5in}
L(f^{\sigma \chi}, 1) =2.407825\\
&L(f^\varepsilon, 1) = 0.996703 \hspace{.5in}
L(f^{\sigma\varepsilon}, 1) = 5.188648.
\end{align*}
Combining the complex values, we find
\begin{align}
\label{eqn:complexval}
&\frac{ \Omega^+_f   L(f^\varepsilon, 1) \sqrt{|D|}}{ \Omega_f}   = -0.138196 \hspace{.5in}
&\frac{ \Omega^+_{f^\sigma}  L(f^{\sigma\varepsilon}, 1) \sqrt{|D|}}{ \Omega_{f^\sigma}}  =-0.361803.
\end{align}
Let $r_1 \colonequals 1/10 \nu - 3/10$ and $r_2 \colonequals -1/10 \nu - 1/5$ be the roots of the polynomial $20x^2 + 10x + 1$.
The complex values of \eqref{eqn:complexval} computed to $30$ digits are within $10^{-28}$ of the algebraic numbers $e_c(r_1)$ and  $e_c(r_2)$.
Under the assumption that the values of \eqref{eqn:complexval} are $e_c(r_1)$ and $ e_c(r_2)$, we have
\begin{align*}
&\langle \pi(y_{K, f } ), \pi(y_{K,f }) \rangle_{\ell_K} =
8\cdot 11 + 4\cdot 11^2 + 10\cdot 11^3 + 7\cdot 11^4 + 4\cdot 11^5 + 8\cdot 11^6 + O(11^7) .\\
&\langle\pi( y_{K, f^\sigma} ),\pi( y_{K,f^\sigma} ) \rangle_{\ell_K} =6\cdot 11 + 11^2 + 2\cdot11^3 + 2\cdot 11^5 + 2\cdot 11^6 + O(11^7).
\end{align*}
We ran the quadratic Chabauty code \cite{QCMod} using the resulting $\rho(z)$ from Theorem~\ref{thm:mainthm} and recovered a finite superset of the rational points on $X_0(107)^+$.
\end{example}

\section*{Acknowledgments}
It is a pleasure to thank my thesis committee Jennifer Balakrishnan, Steffen M\"uller, Robert Pollack, and David Rohrlich for many helpful conversations and comments. I am very grateful to Alex Best, Francesca Bianchi, Edgar Costa, Henri Darmon, Stevan Gajovi\'{c}, Borys Kadets, Ari Shnidman, and John Voight for insightful correspondence and help with computations. I am also indebted to the anonymous referee for many helpful comments.

\newcommand{\etalchar}[1]{$^{#1}$}

\end{document}